\DeclareSymbolFont{AMSb}{U}{msb}{m}{n}
\newtheoremstyle{pineapple}%
  {1em}{1em}%
  {\itshape}{}%
  {\bfseries}{. ---}
  {0.5em}{}
\newtheoremstyle{durian}%
  {1em}{1em}%
  {}{}%
  {\bfseries}{. ---}
  {0.5em}{}
\def\swappedhead#1#2#3{%
  \thmnumber{\@upn{\the\thm@headfont#2\@ifnotempty{#1}{.~}}}%
  \thmname{#1}%
  \thmnote{ {\the\thm@notefont(#3)}}}
\def\@sect#1#2#3#4#5#6[#7]#8{%
  \edef\@toclevel{\ifnum#2=\@m 0\else\number#2\fi}%
  \ifnum #2>\c@secnumdepth \let\@secnumber\@empty
  \else \@xp\let\@xp\@secnumber\csname the#1\endcsname\fi
  \@tempskipa #5\relax
  \ifnum #2>\c@secnumdepth
    \let\@svsec\@empty
  \else
    \refstepcounter{#1}%
    \edef\@secnumpunct{%
      \ifdim\@tempskipa>\z@ 
        \@ifnotempty{#8}{.~}%
      \else
        \@ifempty{#8}{.}{.~}%
      \fi
    }%
    \@ifempty{#8}{%
      \ifnum #2=\tw@ \def\@secnumfont{\bfseries}\fi}{}%
    \protected@edef\@svsec{%
      \ifnum#2<\@m
        \@ifundefined{#1name}{}{%
          \ignorespaces\csname #1name\endcsname\space
        }%
      \fi
      \@seccntformat{#1}%
    }%
  \fi
  \ifdim \@tempskipa>\z@ 
    \begingroup #6\relax
    \@hangfrom{\hskip #3\relax\@svsec}{\interlinepenalty\@M #8\par}%
    \endgroup
    \ifnum#2>\@m \else \@tocwrite{#1}{#8}\fi
  \else
  \def\@svsechd{#6\hskip #3\@svsec
    \@ifnotempty{#8}{\ignorespaces#8\unskip
       \@addpunct.}%
    \ifnum#2>\@m \else \@tocwrite{#1}{#8}\fi
  }%
  \fi
  \global\@nobreaktrue
  \@xsect{#5}}
\def\@seccntformat#1{%
  \protect\textup{\protect\@secnumfont
    \ifnum\pdfstrcmp{subsection}{#1}=0 \bfseries\fi
    \csname the#1\endcsname
    \protect\@secnumpunct
  }%
}
\theoremstyle{pineapple}
\newtheorem{IntroTheorem}{Theorem}
\newtheorem{Theorem}[subsection]{Theorem}
\newtheorem{Lemma}[subsection]{Lemma}
\newtheorem{Proposition}[subsection]{Proposition}
\newtheorem{Corollary}[subsection]{Corollary}
\theoremstyle{durian}
\tikzset{
  symbol/.style={
    draw=none,
    every to/.append style={
      edge node={node [sloped, allow upside down, auto=false]{$#1$}}}
  }
}
\newcommand\scalemath[2]{\scalebox{#1}{\mbox{\ensuremath{\displaystyle #2}}}}
\setlist[1]{labelindent=\parindent}
\setlist[1]{labelsep=0.5em}
\setlist[enumerate,1]{label={\upshape (\roman*)}, ref={\upshape (\roman*)}}
\newcommand{\leqnomode}{\tagsleft@true\let\veqno\@@leqno}
\newcommand{\reqnomode}{\tagsleft@false\let\veqno\@@eqno}
\tikzset{>={Straight Barb[length=2pt,width=4pt]}, commutative diagrams/arrow style=tikz}
\let\c@equation\c@subsection
\DeclareMathOperator{\Lie}{Lie}
\DeclareMathOperator{\Tor}{Tor}
\DeclareMathOperator{\Fr}{Fr}
\DeclareMathOperator{\id}{id}
\DeclareMathOperator{\Spec}{Spec}
\DeclareMathOperator{\Sym}{Sym}
\DeclareMathOperator{\Aut}{Aut}
\DeclareMathOperator{\GL}{\mathbf{GL}}
\DeclareMathOperator{\AutSch}{\mathbf{Aut}}
\DeclareMathOperator{\image}{im}
\newcommand*{\coloneqq}{\mathrel{\rlap{%
           \raisebox{0.3ex}{$\m@th\cdot$}}%
           \raisebox{-0.3ex}{$\m@th\cdot$}}%
           =}
\newcommand{\eqqcolon}{=%
           \mathrel{\rlap{%
           \raisebox{0.3ex}{$\m@th\cdot$}}%
           \raisebox{-0.3ex}{$\m@th\cdot$}}}
\newcommand{\punct}[1]{\makebox[0pt][l]{\,#1}} 
\newcommand{\parref}[1]{{\bf\ref{#1}}}
\DeclareMathOperator{\Gram}{Gram}
\DeclareMathOperator{\rank}{rank}
\DeclareMathOperator{\corank}{corank}
\DeclareMathOperator{\Hom}{Hom}
\DeclareMathOperator{\coker}{coker}
\newcommand{\kk}{\mathbf{k}}
\newcommand{\sO}{\mathcal{O}}
\newcommand{\qbics}{q\operatorname{\bf\!-bics}}
\newcommand{\hrefSP}[1]{\href{https://stacks.math.columbia.edu/tag/#1}{#1}}
\newcommand{\citeSP}[1]{\cite[\hrefSP{#1}]{stacks-project}}
\newcommand{\smallbullet}{} 
\DeclareRobustCommand\smallbullet{%
  \mathord{\mathpalette\smallbullet@{0.5}}%
}
\newcommand{\smallbullet@}[2]{%
  \vcenter{\hbox{\scalebox{#2}{$\m@th#1\bullet$}}}%
}
\newcommand{\subsectiondash}[1]{\subsection{#1}\textbf{---}\;}
\title{\(q\)-bic forms}
\author{Raymond Cheng}
\address{Institute of Algebraic Geometry \\
  Leibniz University Hannover \\
  Welfengarten 1 \\
  30167 Hannover \\
  Germany
}
\email{cheng@math.uni-hannover.de}
\begin{document}
\begin{abstract}
A \emph{\(q\)-bic form} is a pairing \(V \times V \to \mathbf{k}\) that is
linear in the second variable and \(q\)-power Frobenius linear in the first;
here, \(V\) is a vector space over a field \(\mathbf{k}\) containing the finite field
\(\mathbf{F}_{q^2}\). This article develops a geometric theory of \(q\)-bic
forms in the spirit of that of bilinear forms. I find two filtrations
intrinsically attached to a \(q\)-bic form, with which I define a series of
numerical invariants. These are used to classify, study automorphism group
schemes of, and describe specialization relations in the parameter space of
\(q\)-bic forms.
\end{abstract}
\maketitle
\setcounter{tocdepth}{1}

\thispagestyle{empty}
\section*{Introduction}
Eschewing the semi-linear description of the abstract, a
\emph{\(q\)-bic form} is a bilinear pairing \(\beta\) between a
finite-dimensional vector space \(V\) over a field \(\kk\) and its
\emph{Frobenuis twist} \(V^{[1]} \coloneqq \kk \otimes_{\Fr, \kk} V\), where the
scalar action is twisted by the \(q\)-power Frobenius map
\(\Fr \colon \kk \to \kk\). Equivalently, this is a linear map
\[
\beta \colon V^{[1]} \otimes_\kk V \to \kk.
\]
This article develops an intrinsic theory of \(q\)-bic forms, with an eye
towards the geometry of their hypersurfaces of isotropic vectors, the
\emph{\(q\)-bic hypersurfaces} described in more detail below; briefly, these
are a large class of special hypersurfaces which are important to study as they
exemplify many poorly understood positive characteristic phenomena, such as
supersingularity \cite{Tate:Conjecture, SK:Fermat},
unirationality despite being of general type \cite{Shioda:Fermat, Shioda:Unirationality},
Fano varieties that do not carry minimal free rational curves \cite{Conduche, Shen:Fermat, ratconn}, and
inseparable Gauss maps \cite{Wallace:Duality, Noma, KP:Gauss}.
They are extremal in many respects, such as in relation to \(F\)-singularity
measures \cite{KKPSSW:F-Pure}, and arise in relation to with Deligne--Lusztig
theory \cite{Lusztig:Green, DL, Li:DL}, unitary Shimura varieties
\cite{Vollaard, LZ:Kudla, LTXZZ}, and finite Hermitian geometries
\cite{BC:Hermitian, Segre:Hermitian, Hirschfeld:Geometries}. The theory
developed here opens up new deformation- and moduli-theoretic techniques in the
study of these hypersurfaces, already yielding new results and analogies, as
explored in the companion papers \cite{fano-schemes, qbic-threefolds}.

To begin to appreciate the main aspects of the theory of \(q\)-bic forms,
assume from now on that the base field \(\kk\) contains the finite field
\(\mathbf{F}_{q^2}\), so that the \(q\)-power Frobenius is a nontrivial
endomorphism of \(\kk\). A \(q\)-bic form \(\beta\) then pairs two distinct
\(\kk\)-vector spaces which are nonetheless related by the canonical
\(q\)-linear map \(V \to V^{[1]}\) given by \(v \mapsto v^{[1]} \coloneqq 1
\otimes v\). To emphasize: this map is not \(\kk\)-linear and is not a
bijection if \(\kk\) is imperfect. In any case, a basis \(V = \langle
e_1,\ldots,e_n \rangle\) induces a basis \(V^{[1]} = \langle
e_1^{[1]},\ldots,e_n^{[1]} \rangle\) and the \(q\)-bic form \(\beta\) may be
explicitly understood through its \emph{Gram matrix}:
\[
\Gram(\beta; e_1,\ldots,e_n) \coloneqq
\Big(\beta(e_i^{[1]}, e_j)\Big)_{i,j = 1}^n.
\]
Moreover, if \((e_1',\ldots,e_n') = (e_1,\ldots,e_n) \cdot A^\vee\) is another
basis of \(V\), related to the original via an invertible matrix \(A\), then
the two Gram matrices are related by \emph{\(q\)-twisted conjugation}:
\[
\Gram(\beta; e_1',\ldots,e_n') =
A^{[1],\vee} \cdot \Gram(\beta; e_1,\ldots,e_n) \cdot A
\]
where \(A^{[1],\vee}\) is obtained from the transpose of \(A\) by raising
each matrix entry to the \(q\)-th power. In concrete terms, this article
studies the invariants, orbits, and stabilizers of \(q\)-twisted conjugation.

Returning to the intrinsic situation, contemplate: what are invariants
of \(\beta\)? The key observation is that the Frobenius map makes it
possible to iteratively take left and right orthogonals, thereby canonically
attaching two sequences of vector spaces to \(\beta\). Left orthogonals give
rise to a sequence of subspaces of various Frobenius twists of \(V\), collectively
referred to as the \emph{\(\perp^{[\smallbullet]}\)-filtration};
for instance, the first space is the left kernel
\(\mathrm{P}_1' V^{[1]} = V^\perp \coloneqq \ker(\beta^\vee \colon V^{[1]} \to V^\vee)\).
Right orthogonals give rise to a finite filtration on \(V\),
the \emph{\(\perp\)-filtration}, which has the form
\[
\mathrm{P}_{\smallbullet} V \colon
\{0\} \eqqcolon
\mathrm{P}_{-1} V \subseteq
\mathrm{P}_1 V \subseteq
\mathrm{P}_3 V \subseteq
\cdots \subseteq
\mathrm{P}_- V \subseteq
\mathrm{P}_+ V \subseteq
\cdots \subseteq
\mathrm{P}_4 V \subseteq
\mathrm{P}_2 V \subseteq
\mathrm{P}_0 V \eqqcolon V
\]
where the odd- and even-indexed pieces form increasing and decreasing filtrations,
limiting to subspaces \(\mathrm{P}_- V\) and \(\mathrm{P}_+ V\), respectively;
this time, the first piece is
\(\mathrm{P}_1 V = V^{[1],\perp} \coloneqq \ker(\beta \colon V \to V^{[1],\vee})\).
Numerical invariants of \(\beta\) are then obtained by taking dimensions of
graded pieces of this filtration:
for each \(m \geq 1\), set \(\epsilon \coloneqq (-1)^m\), and
\[
a \coloneqq \dim_\kk \mathrm{P}_+ V/ \mathrm{P}_- V,
\qquad
a_m
\coloneqq \dim_\kk \mathrm{P}_{m-\epsilon-1} V/\mathrm{P}_{m+\epsilon-1} V,
\qquad
b_m \coloneqq a_m - a_{m+1}.
\]

The first result of this article classify \(\beta\) in terms of its
\emph{type} \((a;b_m)_{m \geq 1}\) and properties of its
\(\perp^{[\smallbullet]}\)-filtration. This is simplest over an algebraically
closed field, where \(\beta\) is completely classified by its type. The
finitely many isomorphism classes may be described in terms of a normal form
for its Gram matrix. In the following statement, write \(\mathbf{1}\) for the
\(1\)-by-\(1\) identity matrix, \(\mathbf{N}_m\) for the \(m\)-by-\(m\) Jordan
block with \(0\) on the diagonal, and \(\oplus\) for the block diagonal sum of
matrices:

\begin{IntroTheorem}\label{classification-theorem}
Let \((V,\beta)\) be a \(q\)-bic form of type \((a;b_m)_{m \geq 1}\) over an
algebraically closed field \(\kk\). Then there exists a basis
\(V = \langle e_1,\ldots,e_n \rangle\) such that
\[
\Gram(\beta; e_1,\ldots,e_n) =
\mathbf{1}^{\oplus a} \oplus
\Big(\bigoplus\nolimits_{m \geq 1} \mathbf{N}_m^{\oplus b_m} \Big).
\]
\end{IntroTheorem}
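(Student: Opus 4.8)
The plan is to pass to the associated \(q\)-sesquilinear form \(B(u,v) \coloneqq \beta(u^{[1]},v)\), whose matrix in a basis is \(\Gram(\beta;e_1,\ldots,e_n)\), and to exhibit a basis in which \(B\) is an orthogonal direct sum of blocks \(\mathbf{1}\) and \(\mathbf{N}_m\); the correct multiplicities then come for free. Indeed, the \(\perp\)-filtration of an orthogonal direct sum of \(q\)-bic forms is the direct sum of the \(\perp\)-filtrations of the summands (forming right orthogonals commutes with such sums), so the invariants \(a\) and \(b_m\) are additive over orthogonal direct sums; and a direct computation of the filtration shows that \(\mathbf{1}\) has type \(a = 1\), \(b_m = 0\) for all \(m\), while \(\mathbf{N}_m\) has type \(a = 0\) and \(b_k = \delta_{km}\). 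Hence any orthogonal decomposition \(\beta \cong \mathbf{1}^{\oplus a'} \oplus \bigoplus_m \mathbf{N}_m^{\oplus b_m'}\) has type \((a';b_m')\), which must coincide with the type \((a;b_m)\) of \(\beta\); so \(a' = a\), \(b_m' = b_m\), and the displayed normal form follows. The content is thus to produce \emph{some} orthogonal decomposition into blocks \(\mathbf{1}\) and \(\mathbf{N}_m\).

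First I would treat the nondegenerate case, where I claim \(\beta \cong \mathbf{1}^{\oplus n}\) with \(n = \dim V\). The group \(\GL(V)\) acts algebraically on the affine space of \(q\)-bic forms on \(V\) via \(q\)-twisted conjugation, and I claim the nondegenerate forms form a single orbit. The orbit map \(A \mapsto A^{[1],\vee} \cdot \Gram(\beta;e_\bullet) \cdot A\) has differential \(X \mapsto \Gram(\beta;e_\bullet) \cdot X\) at the identity, since the Frobenius-twisted factor contributes nothing — the \(q\)-power map has vanishing differential in characteristic \(p\). Thus when the Gram matrix is invertible the stabilizer has trivial tangent space, hence is finite, hence the orbit is an irreducible locally closed subvariety of \(\GL(V)\) of dimension \(\dim \GL(V)\); being dense and locally closed it is open. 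Since \(\GL(V)\) is connected and a disjoint union of such open orbits, there is exactly one; as \(\mathbf{1}^{\oplus n}\) is nondegenerate, every nondegenerate \(\beta\) is isomorphic to it. (This step uses only that \(\kk\) is algebraically closed of positive characteristic.)

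For the general case I would induct on \(\dim V\), the nondegenerate case being the base step. When \(\beta\) is degenerate, both \(\mathrm{P}_1 V = \ker(\beta \colon V \to V^{[1],\vee})\) and the unique subspace \(U \subseteq V\) with \(U^{[1]} = V^\perp\) — which exists because \(W \mapsto W^{[1]}\) is a bijection on subspaces over the perfect field \(\kk\) — are nonzero, of common dimension \(\corank(\beta)\). The goal is an orthogonal decomposition \(V = V_0 \oplus V_1\) with \(V_0 \cong \mathbf{1}\) or \(V_0 \cong \mathbf{N}_m\), after which induction concludes. Here the \(\perp\)- and \(\perp^{[\smallbullet]}\)-filtrations must be used together: I would first show that \(\beta\) restricted to \(\mathrm{P}_+ V\) has both its left and right radicals equal to \(\mathrm{P}_- V\), so induces a nondegenerate form on \(\mathrm{P}_+ V/\mathrm{P}_- V\) whose \(\mathbf{1}\)-blocks (via the previous paragraph) split off from a suitable lift, while the \(\mathbf{N}_m\)-blocks are built as \(\beta\)-cyclic chains adapted to the two filtrations, with the block length \(m\) recording the position of the generating vector. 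One cannot shortcut this by picking an anisotropic vector \(v\) (one with \(\beta(v^{[1]},v) \neq 0\), which exists as soon as the Gram matrix is nonzero, by independence of the relevant monomials) and splitting off the right orthogonal of \(\langle v\rangle\): the right orthogonal of \(\langle v\rangle\) need not equal its left orthogonal, and indeed \(\langle v\rangle\) need not span a \(\mathbf{1}\)-summand at all — already the \(2 \times 2\) Gram matrix \(\left(\begin{smallmatrix}1 & 0\\ t & 0\end{smallmatrix}\right)\) with \(t \neq 0\) is isomorphic to \(\mathbf{N}_2\), not to \(\mathbf{1} \oplus \mathbf{N}_1\).

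The main obstacle is precisely this last point: arranging the splitting to be \emph{two-sided} orthogonal, so that every cross pairing between the summands vanishes. That the two kernels \(\mathrm{P}_1 V\) and \(V^\perp\) are rigidly enough related for this to be possible is what forces the standing hypotheses that \(\kk\) is perfect and contains \(\mathbf{F}_{q^2}\), and it is the technical heart of the classification.
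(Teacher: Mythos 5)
Your reductions are sound: additivity of the type over orthogonal decompositions (this is \parref{forms-filtrations-sums} together with the computation in \parref{standard-forms-filtration-example}) correctly reduces the theorem to producing \emph{some} orthogonal decomposition into blocks \(\mathbf{1}\) and \(\mathbf{N}_m\). Your treatment of the nonsingular case is also correct, and it is genuinely different from the paper's: you compute that the differential of the \(q\)-twisted conjugation orbit map at the identity is \(X \mapsto BX\) (the Frobenius factor dies infinitesimally, consistent with \parref{forms-aut-tangent-space}), so the orbit of a nonsingular form is open in the irreducible locus \(\{\det \neq 0\}\), which therefore is a single orbit. This is in essence Lang's theorem, the classical route the paper mentions in its introduction; the paper instead proves the stronger statement \parref{forms-hermitian-diagonal} over merely separably closed fields by showing a nonsingular form is spanned by its Hermitian vectors and invoking orthonormal bases for Hermitian forms over \(\mathbf{F}_{q^2}\). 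Your argument is shorter but really needs \(\kk\) algebraically closed. Your cautionary example \(\left(\begin{smallmatrix}1&0\\ t&0\end{smallmatrix}\right) \cong \mathbf{N}_2\) is likewise correct and well chosen.

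The gap is exactly where you write that the matter ``is the technical heart of the classification'': you never construct the \(\mathbf{N}_m\)-summands or their two-sided orthogonal complements, and that construction is most of the proof. Saying the blocks ``are built as \(\beta\)-cyclic chains adapted to the two filtrations'' states the goal, not an argument. What is actually required (\parref{classification-peel}) is: (i) choosing a \(b_m\)-dimensional subspace inside an intersection \(\mathrm{P}_1 V \cap \mathrm{P}'_{\smallbullet} V\) of the two filtrations lifting the graded piece of \parref{forms-invariants-decreasing}, which already needs the exact sequence of \parref{classification-complementary-to-kernel}; (ii) propagating it to subspaces \(V_1',\ldots,V_m'\) via the seesaw identity of images under \(\beta\) and \(\beta^\vee\) from \parref{classification-propagate-subspace}; (iii) verifying by a Gram--Schmidt-type adjustment (\parref{classification-basis}) that the span really has Gram matrix \(\mathbf{N}_m^{\oplus b_m}\) --- your own \(2\times 2\) example shows why a naive choice of chain fails; and (iv) proving that the total orthogonal of this span admits a Frobenius descent to a genuine vector-space complement in \(V\), which rests on the semi-linear complement lemma \parref{classification-q-linear-complement} and on the descent of the \(\perp^{[\smallbullet]}\)-filtration (automatic over a perfect field, as you note, but the complementarity still has to be proved). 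None of (i)--(iv) is routine, and (iv) is precisely the ``two-sided orthogonality'' you flag as the obstacle. As written, the proposal identifies the correct invariants, the correct strategy, and the correct difficulty, but it does not overcome that difficulty, so it does not prove the theorem.
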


See \parref{forms-classification-theorem-kbar}. Over an arbitrary
field \(\kk\), such a basis exists if and only if the
\(\perp^{[\smallbullet]}\)-filtration admits a purely inseparable descent to a
\(V\), and if a certain Galois cohomology class for a finite unitary group
vanishes: see \parref{forms-classification-theorem} and
\parref{forms-hermitian-diagonal}. The power of the result lies in the fact
that the invariants \((a;b_m)_{m \geq 1}\) are readily computable, yielding an
easy way to determine the isomorphism class of a \(q\)-bic form.

The basis exhibiting \(\beta\) in the normal form above is never unique,
meaning that \(q\)-bic forms admit many automorphisms. The next result
describes the basic structure of the automorphism group
\emph{scheme} \(\AutSch_{(V,\beta)}\) of a \(q\)-bic form \((V,\beta)\). This
group is typically of positive dimension and nonreduced; the following computes
its dimension and that of its Lie algebra in terms of the type of \(\beta\):

\begin{IntroTheorem}\label{automorphisms-theorem}
Let \((V,\beta)\) be a \(q\)-bic form of type \((a;b_m)_{m \geq 1}\) over a
field \(\kk\). Then
\begin{align*}
\dim_\kk\Lie\AutSch_{(V,\beta)} & =
\dim_\kk\Hom_\kk(V,V^{[1],\perp}) =
\Big(a + \sum\nolimits_{m \geq 1} mb_m\Big)\Big(\sum\nolimits_{m \geq 1} b_m\Big),\;\;\text{and} \\
\dim\AutSch_{(V,\beta)} & =
\sum\nolimits_{k \geq 1}\Big[ k(b_{2k-1}^2 + b_{2k}^2) +
\Big(a + \sum\nolimits_{m \geq 2k} m b_m\Big) b_{2k-1} +
2k\Big(\sum\nolimits_{m \geq 2k+1} b_m\Big) b_{2k}\Big].
\end{align*}
\end{IntroTheorem}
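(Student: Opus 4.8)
The plan is to reduce to the algebraically closed case and the normal form of Theorem~\parref{classification-theorem}, handle the two identities separately, and — for the group — to decompose the reduced automorphism group along the block structure of the normal form into a finite part, one part per nilpotent block, and a unipotent cross-block part. Both $\dim_\kk\Lie\AutSch_{(V,\beta)}$ and $\dim\AutSch_{(V,\beta)}$ are unaffected by extension of the base field, so I may assume $\kk$ algebraically closed and, by Theorem~\parref{classification-theorem}, fix a basis in which $\Gram(\beta;e_1,\ldots,e_n) = \mathbf{1}^{\oplus a}\oplus\bigl(\bigoplus_{m\geq 1}\mathbf{N}_m^{\oplus b_m}\bigr)$, so that $n = \dim_\kk V = a + \sum_{m\geq 1}mb_m$. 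The Lie algebra identity is then essentially formal: an $R$-point of $\AutSch_{(V,\beta)}$ over the dual numbers $R = \kk[\varepsilon]/(\varepsilon^2)$ congruent to the identity is $\id_V + \varepsilon\phi$ with $\phi\in\operatorname{End}_\kk(V)$, and writing $I + \varepsilon M$ for its matrix, the Frobenius twist $(I + \varepsilon M)^{[1]}$ equals $I$ because $q\geq 2$ (so $(1 + \varepsilon(-))^q = 1$ and $(\varepsilon(-))^q = 0$ in $R$); hence the defining equation $(I + \varepsilon M)^{[1],\vee}\Gram\,(I + \varepsilon M) = \Gram$ collapses to $\Gram M = 0$, i.e. $\image\phi\subseteq\ker(\beta\colon V\to V^{[1],\vee}) = V^{[1],\perp}$. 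So $\Lie\AutSch_{(V,\beta)} = \Hom_\kk(V,V^{[1],\perp})$, which is the first equality; its dimension is $(\dim_\kk V)(\dim_\kk V^{[1],\perp})$, and $\dim_\kk V^{[1],\perp} = \sum_{m\geq 1}b_m$ is the corank of the normal-form Gram matrix ($\mathbf{N}_m$ has corank one, $\mathbf{1}$ none), or equivalently $a_1$ read off the telescoping $\perp$-filtration, giving the second equality.

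For $\dim\AutSch_{(V,\beta)}$ I pass to the reduced subscheme — the group scheme is genuinely nonreduced, the Frobenius annihilating all infinitesimals — and study the $g\in\GL_n(\kk)$ with $g^{[1],\vee}\Gram g = \Gram$, written in block form for the decomposition $V = \mathbf{1}^{\oplus a}\oplus\bigoplus_m\mathbf{N}_m^{\oplus b_m}$. Because $g^{[1]}$ depends only on the $q$-th powers of the entries of $g$, the equations are triangular — each rung of a block couples only to strictly lower rungs — and one solves them rung-by-rung in order of increasing index-sum; this exhibits $(\AutSch_{(V,\beta)})_{\mathrm{red}}$ as built from the block-diagonal automorphisms $\AutSch(\mathbf{1}^{\oplus a})\times\prod_m\AutSch(\mathbf{N}_m^{\oplus b_m})$ and a unipotent group $\mathcal{U}$ of cross-block automorphisms, so that $\dim\AutSch_{(V,\beta)} = \dim\AutSch(\mathbf{1}^{\oplus a}) + \sum_m\dim\AutSch(\mathbf{N}_m^{\oplus b_m}) + \dim\mathcal{U}$. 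Here $\dim\AutSch(\mathbf{1}^{\oplus a}) = 0$, since $\mathbf{1}^{\oplus a}$ is nondegenerate and its automorphism group is the finite unitary group $\mathbf{U}_a(\mathbf{F}_q)\subseteq\GL_a(\mathbf{F}_{q^2})$; solving the single-block equations gives $\dim\AutSch(\mathbf{N}_m^{\oplus b_m}) = \lceil m/2\rceil\,b_m^2$, each of the $b_m^2$ internal $\Hom$-slots carrying exactly $\lceil m/2\rceil$ free parameters (the rest of the block data being determined by them or forced nilpotent); and $\mathcal{U}$ decomposes over ordered pairs of distinct components into compatible-$\Hom$ spaces, whose dimensions the rung-by-rung solution computes to be $a\,b_{2k-1}$ for $\mathbf{1}^{\oplus a}\to\mathbf{N}_{2k-1}^{\oplus b_{2k-1}}$, $m\,b_mb_{2k-1}$ for $\mathbf{N}_m^{\oplus b_m}\to\mathbf{N}_{2k-1}^{\oplus b_{2k-1}}$ with $m\geq 2k$, $2k\,b_mb_{2k}$ for $\mathbf{N}_m^{\oplus b_m}\to\mathbf{N}_{2k}^{\oplus b_{2k}}$ with $m\geq 2k+1$, and $0$ otherwise. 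Summing and regrouping via $\sum_m\lceil m/2\rceil b_m^2 = \sum_{k\geq 1}k(b_{2k-1}^2 + b_{2k}^2)$ and $a\,b_{2k-1} + \sum_{m\geq 2k}m\,b_mb_{2k-1} = \bigl(a + \sum_{m\geq 2k}m\,b_m\bigr)b_{2k-1}$ produces exactly the stated formula for $\dim\AutSch_{(V,\beta)}$.

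The main obstacle is the combinatorics of that last step: pinning down, for each ordered pair of blocks, the precise dimension of the space of form-compatible maps, and especially the asymmetric thresholds — $m\geq 2k$ into odd-length targets, where the contribution is the source length $m$, versus $m\geq 2k+1$ into even-length targets, where it is the target length $2k$. These asymmetric bounds are exactly what makes the formula not symmetric in odd and even indices, and they arise from tracking how the nilpotent shift of $\mathbf{N}_m$ interacts with the $q$-power Frobenius: a candidate cross-block map generically fails to preserve $\beta$, and whether — and in how many independent ways — it can be corrected by adjusting higher rungs depends on the parity of the target length relative to the depth $2k$. I expect this is organized most cleanly through the intrinsic $\perp$- and $\perp^{[\smallbullet]}$-filtrations, which already carry precisely this parity bookkeeping; any structural results on $\AutSch_{(V,\beta)}$ and these filtrations proved earlier in the paper would shorten the intermediate reductions, leaving essentially only the normal-form linear algebra.
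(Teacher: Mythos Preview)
Your Lie algebra computation is correct and is exactly what the paper does in \parref{forms-aut-tangent-space}: over the dual numbers the Frobenius twist of \(\id + \varepsilon\varphi\) collapses to the identity, forcing \(\image\varphi \subseteq \mathrm{P}_1 V\).

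For \(\dim\AutSch_{(V,\beta)}\) your route genuinely diverges from the paper's, and the gap you flag at the end is real. You propose to parameterize \((\AutSch_{(V,\beta)})_{\mathrm{red}}\) by solving \(g^{[1],\vee}Bg = B\) block-by-block in the normal form, asserting a decomposition into block-diagonal automorphisms and a unipotent cross-block group \(\mathcal{U}\), and then asserting the dimensions of the cross-block pieces. The numbers you quote are correct, but two things are missing. First, the block decomposition of \(V\) is not canonical (only the \(\perp\)- and \(\perp^{[\smallbullet]}\)-filtrations are), so there is no evident restriction map from \(\AutSch_{(V,\beta)}\) to \(\prod_m\AutSch(\mathbf{N}_m^{\oplus b_m})\); your ``built from'' needs to be made precise before the dimension count is additive. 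Second, the rung-by-rung solution of the cross-block equations, with its parity-dependent thresholds \(m \geq 2k\) versus \(m \geq 2k+1\), is exactly the hard combinatorics, and you have not carried it out.

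The paper sidesteps both issues by never parameterizing the group. Instead it proves (\parref{auts-smooth-parabolic}) that the closed subgroup \(\AutSch_{(V,\beta,\mathrm{P}'_\bullet V)}\) of automorphisms preserving a descent of the \(\perp^{[\smallbullet]}\)-filtration is \emph{smooth}, via an infinitesimal lifting argument that exploits a canonical Frobenius-lift along \(q\)-small extensions (\parref{aut-small-lift-along-frob}). Over a perfect field this subgroup is then identified with the reduced subscheme (\parref{auts-identify-reduced}), so \(\dim\AutSch_{(V,\beta)}\) equals the dimension of its Lie algebra, which by \parref{forms-aut-tangent-space} is the space of maps \(V \to \mathrm{P}_1 V\) respecting \(\mathrm{P}'_\bullet V\). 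That is a purely linear-algebraic object, and decomposing it along the associated graded of \(\mathrm{P}'_\bullet V\) turns the whole computation into summing \(a_m \cdot \dim(\mathrm{P}_1 V \cap \mathrm{P}'_m V)\) over the filtration steps; the parity asymmetries then fall out of the formulae in \parref{forms-numerical-invariants-type} for \(\dim(\mathrm{P}_1 V \cap \mathrm{P}'_m V)\). Your closing instinct is right: the filtrations do the bookkeeping, and the key nontrivial input your sketch is missing is the smoothness theorem \parref{auts-smooth-parabolic}, which is what converts the problem from ``parameterize a nonreduced group variety'' into ``compute a filtered \(\Hom\)-space''.
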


The Lie algebra statement is \parref{forms-aut-tangent-space}, where the two
vector spaces are in fact canonically identified, and the second dimension
statement is \parref{aut-dimension}. The proof proceeds by identifying the
reduced subgroup scheme of \(\AutSch_{(V,\beta)}\), say when \(\kk\)
is perfect, with the subgroup of automorphisms that additionally preserve a
descent of the \(\perp^{[\smallbullet]}\)-filtration to \(V\), see
\parref{auts-identify-reduced}; the latter is shown to be reduced in
\parref{auts-smooth-parabolic} by studying infinitesimal deformations of the
identity automorphism. This relies on an observation
\parref{aut-small-lift-along-frob}, which may be of independent interest, that
produces canonical lifts of infinitesimal deformations, up to a Frobenius
twist, in certain situations.

Consider now the parameter space of \(q\)-bic forms on the fixed \(n\)-dimensional
vector space \(V\):
\[
\qbics_V \coloneqq
\mathbf{A}(V^{[1]} \otimes_\kk V)^\vee \coloneqq
\Spec\Sym^*(V^{[1]} \otimes_\kk V).
\]
Via the Gram matrix construction, this may be identified with the affine
space of \(n\)-by-\(n\) matrices. Orbits of the natural
\(\mathbf{GL}_V\) action, corresponding to \(q\)-twisted conjugation of
matrices, are determined by Theorem \parref{classification-theorem} as the
locally closed subschemes
\[
\qbics_{V,\operatorname{type}(\beta)} \coloneqq
\Set{[\beta'] \in \qbics_V | \operatorname{type}(\beta) = \operatorname{type}(\beta')}
\]
parameterizing \(q\)-bic forms with the same type as a given form \(\beta\);
as usual, this is a smooth and irreducible subscheme, and its
codimension is \(\dim\AutSch_{(V,\beta)}\), which is determined by Theorem
\parref{automorphisms-theorem}.

Taken together, these subschemes  yield a finite stratification of the space
\(\qbics_V\). The final results of this article partially characterize the
closure relations amongst the strata in terms of inequalities amongst types.
For a \(q\)-bic form \(\beta\) of type \((a;b_m)_{m \geq 1}\) and each integer
\(m \geq 1\), write
\[
\Psi_m(\beta) \coloneqq
\begin{dcases*}
b_{2k-1}  + 2\sum\nolimits_{\ell = 1}^{k - 1} (k-\ell) b_{2\ell - 1}  &
if \(m = 2k-1\), \\
\sum\nolimits_{\ell = 1}^{k - 1} \ell b_{2\ell} +
k\Big(\sum\nolimits_{\ell \geq 1} b_{2\ell - 1} + \sum\nolimits_{\ell \geq k} b_{2\ell}\Big) &
if \(m = 2k\),
\end{dcases*}
\]
and \(\Theta_m(\beta) \coloneqq \sum\nolimits_{k = 1}^m b_{2k-1}\). With this
notation, the result is as follows:

\begin{figure}
\[
\begin{tikzcd}[row sep=0.1em, column sep=0.8em]
  \mathbf{1}^5 \rar[symbol={\rightsquigarrow}]
& \mathbf{1}^3 \mathbf{N}_2 \rar[symbol={\rightsquigarrow}]
& \mathbf{1}  \mathbf{N}_4 \rar[symbol={\rightsquigarrow}] \ar[dr,symbol={\rightsquigarrow}]
& \mathbf{N}_5   \rar[symbol={\rightsquigarrow}]
& \mathbf{1}^2 \mathbf{N}_3 \rar[symbol={\rightsquigarrow}] \ar[dr,symbol={\rightsquigarrow}]
& \mathbf{0} \mathbf{1}^4   \ar[dr,symbol={\rightsquigarrow}]
\\
&
&
& \mathbf{1}\mathbf{N}_2^2 \ar[rr,symbol={\rightsquigarrow}]
&
& \mathbf{N}_2 \mathbf{N}_3 \rar[symbol={\rightsquigarrow}]
& \mathbf{0}\mathbf{1}^2\mathbf{N}_2
\end{tikzcd}
\]
\[
\begin{array}{c|*{9}c}
\mathbf{b}
& \mathbf{1}^{5}
& \mathbf{1}^3\mathbf{N}_2
& \mathbf{1}\mathbf{N}_4
& \mathbf{N}_5
& \mathbf{1}^2\mathbf{N}_3
& \mathbf{1}  \mathbf{N}_2^{2}
& \mathbf{0}\mathbf{1}^{4}
& \mathbf{N}_2\mathbf{N}_3
& \mathbf{0}\mathbf{1}^2\mathbf{N}_2 \\
\hline
\dim\qbics_{V,\mathbf{b}}
& 25
& 24
& 23
& 22
& 21
& 21
& 20
& 20
& 19
\end{array}
\]
\caption{Immediate specialization relations amongst \(5\)-dimensional \(q\)-bic
forms and dimensions of the corresponding strata, up to the first few with
nontrivial radical.}
\label{moduli-five-dim-figure}
\end{figure}

\begin{IntroTheorem}\label{moduli-theorem}
Let \(\beta\) and \(\beta'\) be \(q\)-bic forms on a \(\kk\)-vector space \(V\).
\begin{enumerate}
\item\label{moduli-theorem.sufficient}
If the orbit closure of \(\beta\) contains \(\beta'\), then
\(\Psi_m(\beta) \leq \Psi_m(\beta')\) for all \(m \geq 1\).
\item\label{moduli-theorem.necessary}
If \(\Psi_m(\beta) \leq \Psi_m(\beta')\) and
\(\Theta_m(\beta) \leq \Theta_m(\beta')\) for all \(m \geq 1\), then the orbit
closure of \(\beta\) contains \(\beta'\).
\end{enumerate}
\end{IntroTheorem}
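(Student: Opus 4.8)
\emph{Plan.} The two implications are of quite different character, and I would prove them separately: \ref{moduli-theorem.sufficient} by semicontinuity of the numerical invariants, and \ref{moduli-theorem.necessary} by exhibiting an explicit chain of degenerations. Throughout I interpret orbit closures via the strata $\qbics_{V,\operatorname{type}}$, so that the statement becomes one about the poset of types and is insensitive to the ground field.

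For \ref{moduli-theorem.sufficient}: by construction $\Psi_m$ is a $\GL_V$-invariant function on $\qbics_V$, since it is read off the $\perp$-filtration, which is intrinsic to the isomorphism class of $(V,\beta)$. The point to establish is that $\Psi_m$ is moreover upper semicontinuous on $\qbics_V$. Using the explicit description of the $\perp$-filtration, one checks that $\Psi_m(\beta)$ is a fixed combination of dimensions such as $\dim_\kk\mathrm{P}_{2k-1}V$ and $\dim_\kk\rad(\beta)$ and their analogues along the filtration, each of which is the fibrewise dimension of the kernel of a morphism of vector bundles over $\qbics_V$ assembled from the universal $q$-bic form and its iterated Frobenius twists, and the combination is itself upper semicontinuous; equivalently, one works over a discrete valuation ring realizing the specialization $\beta\rightsquigarrow\beta'$ and observes that these kernel dimensions, hence $\Psi_m$, can only jump up. Granting this, since $\Psi_m$ is constant on $O_\beta$ and the latter is dense in $\overline{O_\beta}$, the closed set $\{[\beta'] : \Psi_m(\beta')\geq\Psi_m(\beta)\}$ contains $\overline{O_\beta}$, so $\beta'\in\overline{O_\beta}$ forces $\Psi_m(\beta')\geq\Psi_m(\beta)$. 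The only real work here is the bookkeeping identifying $\Psi_m$ with such a combination.

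For \ref{moduli-theorem.necessary}: I would first compile a repertoire of \emph{elementary specializations} --- one-parameter families $\tilde\beta$ of $q$-bic forms over $\kk[t]$ whose fibre at $t\neq 0$ has one type and whose fibre at $t=0$ has a strictly degenerate type --- each verified by computing the $\perp$-filtration of $\tilde\beta$ directly (the relative Frobenius causing no trouble, since $\kk[t]\supseteq\mathbf{F}_{q^2}$). Natural moves, in normal-form notation, include merging two $\mathbf{1}$-blocks, $\mathbf{1}^2\rightsquigarrow\mathbf{N}_2$; absorbing $\mathbf{1}$-blocks into a Jordan block, $\mathbf{1}\mathbf{N}_{2k}\rightsquigarrow\mathbf{N}_{2k+1}$ and $\mathbf{1}^2\mathbf{N}_{2k}\rightsquigarrow\mathbf{N}_{2k+2}$; releasing $\mathbf{1}$-blocks, $\mathbf{N}_{2k+1}\rightsquigarrow\mathbf{1}^2\mathbf{N}_{2k-1}$; and redistributing Jordan sizes between the two parities --- as illustrated in Figure \ref{moduli-five-dim-figure}. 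For each I would record precisely how every $\Psi_m$ and $\Theta_m$ changes; by \ref{moduli-theorem.sufficient} these moves never decrease any $\Psi_m$.

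The combinatorial core of \ref{moduli-theorem.necessary} is then the claim: \emph{if $\beta\neq\beta'$ satisfy $\Psi_m(\beta)\leq\Psi_m(\beta')$ and $\Theta_m(\beta)\leq\Theta_m(\beta')$ for all $m$, then some elementary specialization $\beta\rightsquigarrow\beta''$ again satisfies $\Psi_m(\beta'')\leq\Psi_m(\beta')$ and $\Theta_m(\beta'')\leq\Theta_m(\beta')$ for all $m$.} Granting this, the theorem follows by induction on $\sum_m(\Psi_m(\beta')-\Psi_m(\beta))\in\mathbf{Z}_{\geq 0}$: the defining formulas show that on a fixed $V$ the sequence $(\Psi_m)_{m\geq 1}$ determines the type, so a nontrivial move strictly raises some $\Psi_m$ and the quantity strictly drops; when it reaches $0$ the running type is $\operatorname{type}(\beta')$, and the accumulated chain of specializations places $\beta'$ in $\overline{O_\beta}$. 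I expect this combinatorial lemma to be the main obstacle: it demands a careful case analysis locating the slack in the inequalities --- which $\Psi_m$ and which $\Theta_m$ are strict, and of which parity --- so as to always select a move that consumes slack without overshooting, and it is exactly here that the auxiliary invariants $\Theta_m$ are indispensable and the gap between necessity and sufficiency emerges.
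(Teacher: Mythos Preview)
Your plan for \ref{moduli-theorem.necessary} is essentially the paper's: Lemma~\ref{moduli-basic-specializations} assembles exactly the repertoire of elementary specializations you describe, and Proposition~\ref{moduli-specializations-sufficient} carries out the combinatorial case analysis in five steps, at each stage locating slack in the \(\Psi_m\) and \(\Theta_m\) inequalities to select a move. So on this half you are aligned with the paper.

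For \ref{moduli-theorem.sufficient}, however, the paper takes a genuinely different route. Rather than expressing \(\Psi_m\) as a nonnegative combination of fibrewise kernel dimensions and invoking semicontinuity, the paper argues indirectly through automorphism groups: if \(\beta \rightsquigarrow \beta'\), then also \(\beta \oplus \gamma \rightsquigarrow \beta' \oplus \gamma\) for \emph{every} auxiliary form \((W,\gamma)\), and hence \(\dim\AutSch_{(V\oplus W,\beta\oplus\gamma)} \leq \dim\AutSch_{(V\oplus W,\beta'\oplus\gamma)}\) by the general fact that stabilizer dimension jumps up on orbit closures. The summation formula of Corollary~\ref{moduli-dimension-summation} shows this dimension is \emph{linear} in the invariants \((c;d_m)\) of \(\gamma\), with \(\Phi_m(\beta)\) as the coefficient of \(d_m\). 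Letting \(\gamma\) vary extracts the coefficientwise inequality \(\Phi_m(\beta) \leq \Phi_m(\beta')\), equivalently \(\Psi_m(\beta) \leq \Psi_m(\beta')\). This leverages Theorem~\ref{automorphisms-theorem} in an essential way and avoids any analysis of how the iterated orthogonal construction behaves in families.

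Your semicontinuity approach is not wrong in spirit---indeed one can check that \(\Psi_{2k-1} = \Theta_k + \Theta_{k-1} + 2\sum_{j\leq k-2}\Theta_j\) and \(\Psi_{2k} = \sum_{j=1}^k \dim_\kk(\mathrm{P}_{2j-2}V^{[1]}\cap\mathrm{P}'_1V^{[1]})\), so the required nonnegativity does hold---but making ``each of which is the fibrewise dimension of the kernel of a morphism of vector bundles'' precise for the higher pieces \(\mathrm{P}_iV\) requires care, since these are defined by iterated orthogonals whose formation need not commute with base change. The paper's argument trades this bookkeeping for the structural input of the dimension formula; yours would be more self-contained but demands exactly the verification you flag as ``the only real work.''
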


See \parref{moduli-specializations-necessary} and
\parref{moduli-specializations-sufficient}. Figure
\parref{moduli-five-dim-figure} illustrates the result for \(5\)-dimensional
\(q\)-bic forms. Clearly, this can be sharpened: for instance, the hypothesis
in \ref{moduli-theorem.necessary} can be relaxed. See
\parref{moduli-specializations-remarks} for further comments and questions
about the moduli of \(q\)-bic forms.

\subsection*{Applications and interrelations}\textbf{---}\;
Isotropic lines for a nonzero \(q\)-bic form \(\beta\) on \(V\) are parameterized
by a degree \(q+1\) hypersurface \(X\) in the projective space \(\mathbf{P}V\):
such is a \emph{\(q\)-bic hypersurface}. For example, if \(\beta\) admits
\(\mathbf{1}^{\oplus n}\) as a Gram matrix, then \(X\) is the Fermat
hypersurface of degree \(q+1\). As discussed already, smooth \(q\)-bic
hypersurfaces have long been of recurring interest for many reasons;
see also \cite[pp.7--11]{thesis} for further references and discussion. The
theory developed here provides powerful new methods to the study of these
objects.

There have been a few precedents to this circle of ideas in the literature:
Thinking of \(q\)-bic hypersurfaces in terms of a bilinear form is related to
the study of finite Hermitian varieties, as in \cite{BC:Hermitian,
Segre:Hermitian}. This perspective is also adopted by \cite{Shimada:Lattices}
in a spirit closer to that of the present article. The classification of
\(q\)-conjugacy classes of matrices has been considered in varying degrees of
generality: for nonsingular matrices, this follows from Lang's Lemma, see
\cite[Theorem 1]{Lang}, and can be found in \cite{Hefez:Thesis,
Beauville:Moduli}; for corank \(1\) matrices, this is \cite[Proposition
1]{HH:Fermat}; and for general matrices, this is \cite[Theorem
7.1]{KKPSSW:F-Pure}.

\subsection*{Outline}\textbf{---}\;
Basic definitions and constructions are given in \S\parref{section-definitions}.
The relationship between \(q\)-bic forms and Hermitian forms over
\(\mathbf{F}_{q^2}\) is explained in \S\parref{section-hermitian}; in
particular, \parref{forms-hermitian-diagonal} shows that any nonsingular
\(q\)-bic form over a separably closed field admits an orthonormal basis. The
\(\perp\)- and \(\perp^{[\smallbullet]}\)-filtrations are constructed in
\S\parref{section-filtrations}, and certain basic properties such as symmetry
and the meaning of the \(b_m\) are established: see
\parref{forms-intersect-filtrations} and \parref{forms-invariants-decreasing}.
Classification of \(q\)-bic forms by their numerical invariants is accomplished
in \S\parref{section-classification}. Automorphism group schemes of \(q\)-bic
forms are studied in \S\parref{section-automorphisms}. The article closes in
\S\parref{section-moduli} with a basic study of the moduli space of \(q\)-bic
forms.

\subsection*{Acknowledgements}\textbf{---}\;
This paper is based and expands on Chapter 1 of my thesis
\cite{thesis}. Much thanks goes to Aise Johan de Jong for sharing an interest
on this and related topics over the years. Thanks also to Jason Starr and
Bjorn Poonen for helpful comments. During the initial stages of this work, I
was partially supported by an NSERC Postgraduate Scholarship.

\section{Basic notions}\label{section-definitions}
This Section begins with the basic definitions, constructions, and properties
of \(q\)-bic forms. For the schematic constructions later, this material
is developed in the setting of finite projective modules over an
\(\mathbf{F}_{q^2}\)-algebra \(R\). To set notation: Write
\(\Fr \colon R \to R\) for the \(q\)-power Frobenius morphism.
Given an \(R\)-module \(M\), write \(M^\vee \coloneqq \Hom_R(M,R)\) for its
dual; the double dual is tacitly identified with \(M\)
itself. For each integer \(i \geq 0\), write
\(M^{[i]} \coloneqq R \otimes_{\Fr^i,R} M\) for the \emph{\(i\)-th Frobenius
twist} of \(M\): the (left) \(R\)-module on which the action of \(R\) is
twisted by the \(q^i\)-power Frobenius. The canonical map \(M \to M^{[i]}\)
given by \(m \mapsto m^{[i]} \coloneqq 1 \otimes m\) is the universal
\(\Fr^i\)-linear map out of \(M\). Given a submodule \(N' \subseteq
M^{[i]}\), a submodule \(N \subseteq M\) such that \(N^{[i]} = N'\) is called a
\emph{Frobenius descent} of \(N'\) to \(M\).

\subsectiondash{Definitions}\label{forms-definition}
A \emph{\(q\)-bic form} over \(R\) is a pair \((M,\beta)\) consisting of an
\(R\)-module \(M\) and an \(R\)-linear map
\(\beta \colon M^{[1]} \otimes_R M \to R\). A \emph{morphism}
\(\varphi \colon (M_1,\beta_1) \to (M_2,\beta_2)\)
between two \(q\)-bic forms is a morphism \(\varphi \colon M_1 \to M_2\) of
\(R\)-modules such that, for every \(m \in M_1\) and \(m' \in M_1^{[1]}\),
\[
\beta_1(m',m) = \beta_2(\varphi^{[1]}(m'), \varphi(m))
\]
where \(\varphi^{[1]} \colon M_1^{[1]} \to M_2^{[1]}\) is the Frobenius twist
of \(\varphi\). The morphism \(\varphi\) is an \emph{isomorphism} if the underlying
module map is an isomorphism.

Adjunction induces two mutually dual \(R\)-linear maps which, by an abuse of
notation, are denoted
\[
\beta \colon M \to M^{[1],\vee}
\quad\text{and}\quad
\beta^\vee \colon M^{[1]} \to M^\vee.
\]
The form \((M,\beta)\) is said to be \emph{nondegenerate} if
the map \(\beta \colon M \to M^{[1]}\) is injective, and
\emph{nonsingular} if this map is an isomorphism.

\subsectiondash{Gram matrices}\label{forms-gram-matrix}
Suppose that \(M\) is, moreover, a finite free \(R\)-module. The
\emph{Gram matrix} of a \(q\)-bic form \(\beta\) with respect to a basis
\(\varphi \colon \bigoplus_{i = 1}^n R \cdot e_i \xrightarrow{\sim} M\) is the
\(n\)-by-\(n\) matrix
\[
\Gram(\beta;e_1,\ldots,e_n) \coloneqq
\big(\beta(\varphi(e_i)^{[1]},\varphi(e_j))\big)_{i, j = 1}^n.
\]
Equivalently, this is the matrix of the map \(\beta \colon M \to M^{[1],\vee}\)
with respect to the bases
\(\varphi \colon \bigoplus_{i = 1}^n R \cdot e_i \xrightarrow{\sim} M\) and
\(\varphi^{[1]} \colon \bigoplus_{i = 1}^n R \cdot e_i^{[1]} \xrightarrow{\sim} M^{[1]}\).

Given another basis
\(\varphi' \colon \bigoplus_{i = 1}^n R \cdot e_i' \xrightarrow{\sim} M\),
view the change of basis isomorphism
\(A \coloneqq \varphi^{-1} \circ \varphi'\) as an invertible \(n\)-by-\(n\)
matrix over \(R\). Its Frobenius twist \(A^{[1]}\) is the matrix obtained from
\(A\) by taking \(q\)-powers entrywise, and the two Gram matrices are related
by
\[
\Gram(\beta; e_1',\ldots,e_n') =
A^{[1],\vee} \cdot \Gram(\beta; e_1,\ldots,e_n) \cdot A.
\]
It is now clear that a \(q\)-bic form \(\beta\) on a free module \(M\) is
nonsingular if and only if its Gram matrix in some---equivalently, any---basis
is invertible.

\subsectiondash{Standard forms}\label{forms-standard}
Gram matrices provide a convenient way to encode \(q\)-bic forms.
Given an \(n\)-by-\(n\) matrix \(B\) over \(R\), let \((R^{\oplus n}, B)\)
denote the unique \(q\)-bic form with Gram matrix \(B\) in the given basis.
Particularly simple, and important, are the following:
For each integer \(m \geq 1\), let
\[
\mathbf{N}_m \coloneqq
\left(
\begin{smallmatrix}
0 & 1 & \cdots &   &   \\
  & 0 & \cdots &   &   \\
  &   & \cdots &   &   \\
  &   & \cdots & 0 & 1 \\
  &   & \cdots &   & 0
\end{smallmatrix}
\right)
\]
denote the \(m\)-by-\(m\) Jordan block with \(0\) on the diagonal, and write
\(\mathbf{1}\) for the \(1\)-by-\(1\) identity matrix.
Given two matrices \(B_1\) and \(B_2\), write \(B_1 \oplus B_2\) for their
block diagonal sum. Let \(\mathbf{b} \coloneqq (a;b_1,b_2,\ldots)\) be a
sequence of nonnegative integers such that \(n = a + \sum_{m \geq 1} m b_m\).
The \(q\)-bic form
\[
\Big(
R^{\oplus n},
\mathbf{1}^{\oplus a} \oplus
\Big(\bigoplus\nolimits_{m \geq 1} \mathbf{N}_m^{\oplus b_m}\Big)
\Big)
\]
is the \emph{standard \(q\)-bic form of type \(\mathbf{b}\)}.

\subsectiondash{Orthogonals}\label{forms-orthogonal-inclusion-reversing}
To discuss orthogonals, consider generally a map
\(\beta \colon M_2 \otimes_R M_1 \to R\) of \(R\)-modules, viewed
as a pairing between \(M_1\) and \(M_2\). Given submodules
\(N_i \subseteq M_i\) for \(i = 1,2\), write
\[
N_1^\perp
\coloneqq \ker(M_2 \xrightarrow{\beta^\vee} M_1^\vee \to N_1^\vee)
\quad\text{and}\quad
N_2^\perp
\coloneqq \ker(M_1 \xrightarrow{\beta} M_2^\vee \to N_2^\vee).
\]
These are the \emph{orthogonals}, with respect to \(\beta\), of \(N_1\) and
\(N_2\), respectively. The orthogonals \(M_2^\perp \subseteq M_1\) and
\(M_1^\perp \subseteq M_2\) are called the \emph{kernels} of
\(\beta\) (and \(\beta^\vee\)). It is formal to see that taking orthogonals
is an inclusion-reversing operation. Precisely: for submodules \(N_1\) and
\(N_1'\) of \(M_1\):
\begin{enumerate}
\item\label{forms-orthogonal-inclusion-reversing.nested}
if \(N_1' \subseteq N_1\), then \(N_1'^\perp \supseteq N_1^\perp\);
\item\label{forms-orthogonal-inclusion-reversing.sum}
\((N_1 + N_1')^\perp = N_1^\perp \cap N_1'^\perp\); and
\item\label{forms-orthogonal-inclusion-reversing.intersection}
\((N_1 \cap N_1')^\perp \supseteq N_1^\perp + N_1'^\perp\).
\end{enumerate}
With further assumptions, orthogonals behave as expected:

\begin{Lemma}\label{forms-orthogonal-sequence}
Suppose that the image of the map \(\beta \colon M_1 \to M_2^\vee\)
is a local direct summand, and that \(N_1', N_1 \subseteq M_1\) are local
direct summands. Then there are exact sequences:
\begin{enumerate}
\item\label{forms-orthogonal-sequence.everything}
\(
  0 \to
  M_2^\perp \to
  M_1 \to
  M_2^\vee \to
  M_1^{\perp,\vee} \to
  0
\),
\item\label{forms-orthogonal-sequence.submodule}
\(
  0 \to
  N_1 \cap M_2^\perp \to
  N_1 \to
  M_2^\vee \to
  N_1^{\perp,\vee} \to
  0
\), and
\item\label{forms-orthogonal-sequence.nested}
\(
0 \to
N_1 \cap M_2^\perp/ N_1' \cap M_2^\perp \to
N_1/N_1' \to
(N_1'^\perp/N_1^\perp)^\vee \to
0
\) if additionally \(N_1' \subseteq N_1\).
\end{enumerate}
The operation of taking orthogonals further satisfy:
\begin{enumerate}
\setcounter{enumi}{3}
\item\label{forms-orthogonal-reflexive}
\(N_1^{\perp,\perp} =  M_2^\perp + N_1\), that is, reflexivity, and
\item\label{forms-orthogonal-cap-is-sum}
\((N_1 \cap N_1')^\perp = N_1^\perp + N_1'^\perp\)
if additionally \(M_2^\perp \subseteq N_1\).
\end{enumerate}
\end{Lemma}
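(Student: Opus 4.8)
The plan is to reduce everything to two elementary facts and then diagram-chase. First, for a local direct summand \(N_1 \subseteq M_1\), write \(\beta|_{N_1} \colon N_1 \hookrightarrow M_1 \xrightarrow{\beta} M_2^\vee\) for the restricted pairing; then tautologically \(N_1 \cap M_2^\perp = \ker(\beta|_{N_1})\) and \(N_1^\perp = \ker\big((\beta|_{N_1})^\vee \colon M_2 \to N_1^\vee\big)\), the latter because \((\beta|_{N_1})^\vee\) is precisely the composite \(M_2 \xrightarrow{\beta^\vee} M_1^\vee \to N_1^\vee\). Second, under the local direct summand hypotheses the map \(\beta|_{N_1}\) has image a local direct summand --- this is the hypothesis when \(N_1 = M_1\), and in general it is deduced from \(\image\beta\) and \(N_1\) being local direct summands once the rank of \(\beta|_{N_1}\) is seen to be locally constant on \(\Spec R\) --- so that \(\ker(\beta|_{N_1})\), \(\image(\beta|_{N_1})\), \(\coker(\beta|_{N_1})\) are finite projective and \(\Hom_R(-,R)\) preserves exactness of the short exact sequences below.

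With this in hand, \ref{forms-orthogonal-sequence.submodule} --- and its special case \(N_1 = M_1\), which is \ref{forms-orthogonal-sequence.everything} --- follows by splicing the tautological sequences \(0 \to N_1 \cap M_2^\perp \to N_1 \to \image(\beta|_{N_1}) \to 0\) and \(0 \to \image(\beta|_{N_1}) \to M_2^\vee \to \coker(\beta|_{N_1}) \to 0\), after identifying \(\coker(\beta|_{N_1})\) with \(N_1^{\perp,\vee}\): dualizing the second sequence exhibits \(\coker(\beta|_{N_1})^\vee = \ker\big((\beta|_{N_1})^\vee\big) = N_1^\perp\), and the resulting map \(M_2^\vee \twoheadrightarrow N_1^{\perp,\vee}\) is restriction of functionals along \(N_1^\perp \hookrightarrow M_2\). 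For \ref{forms-orthogonal-sequence.nested}, the inclusion \(N_1' \subseteq N_1\) induces a morphism between the corresponding instances of \ref{forms-orthogonal-sequence.submodule}; splitting each four-term sequence into two short exact sequences and applying the snake lemma to each resulting pair produces both the exact sequence \(0 \to (N_1 \cap M_2^\perp)/(N_1' \cap M_2^\perp) \to N_1/N_1' \to \image(\beta|_{N_1})/\image(\beta|_{N_1'}) \to 0\) and a canonical isomorphism \(\image(\beta|_{N_1})/\image(\beta|_{N_1'}) \cong (N_1'^\perp/N_1^\perp)^\vee\), which splice to the claim.

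For reflexivity \ref{forms-orthogonal-reflexive}, the containment \(M_2^\perp + N_1 \subseteq N_1^{\perp,\perp}\) is immediate from the definitions, and equality is forced by a rank count on \(\Spec R\): \ref{forms-orthogonal-sequence.submodule} yields \(\rank N_1 - \rank(N_1 \cap M_2^\perp) = \rank M_2 - \rank N_1^\perp\), and feeding the same identity back for \(N_1^\perp \subseteq M_2\), together with \(M_1^\perp \subseteq N_1^\perp\), shows the two sides of \ref{forms-orthogonal-reflexive} have equal rank. Finally \ref{forms-orthogonal-cap-is-sum} becomes formal: with \(X \coloneqq N_1^\perp + N_1'^\perp\), properties \ref{forms-orthogonal-inclusion-reversing.sum} and \ref{forms-orthogonal-reflexive} and the modular law (using \(M_2^\perp \subseteq N_1\)) give \(X^\perp = (M_2^\perp + N_1) \cap (M_2^\perp + N_1') = M_2^\perp + (N_1 \cap N_1')\), hence \(X^{\perp\perp} = (M_2^\perp)^\perp \cap (N_1 \cap N_1')^\perp = (N_1 \cap N_1')^\perp\) since \((M_2^\perp)^\perp = M_2\) tautologically; on the other hand \(X^{\perp\perp} = M_1^\perp + X = X\) by \ref{forms-orthogonal-reflexive} and \(M_1^\perp \subseteq N_1^\perp\), so \((N_1 \cap N_1')^\perp = X\). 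The main obstacle throughout is the bookkeeping over a general base: at each stage one must verify that the kernels, images, cokernels, sums, and intersections appearing are local direct summands, so that dualization remains exact and the rank counts are meaningful --- over a field all of this is automatic and the argument reduces to pure diagram-chasing.
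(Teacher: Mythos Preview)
Your argument tracks the paper's closely. Parts \ref{forms-orthogonal-sequence.everything}--\ref{forms-orthogonal-sequence.nested} are handled identically: identify the cokernel of the restricted map by dualizing, then chase the resulting diagram. For \ref{forms-orthogonal-cap-is-sum}, both you and the paper reduce to reflexivity plus the modular law (the paper isolates the modular-law step as a separate lemma), arranged only slightly differently. The one genuine difference is \ref{forms-orthogonal-reflexive}: the paper observes directly that \(N_1^{\perp,\perp} = \beta^{-1}\big(\ker(M_2^\vee \to N_1^{\perp,\vee})\big)\) and reads off from the exactness in \ref{forms-orthogonal-sequence.submodule} that this preimage is \(N_1 + M_2^\perp\); your rank count reaches the same conclusion but needs the symmetric instance of \ref{forms-orthogonal-sequence.submodule} for \(N_1^\perp \subseteq M_2\) and correspondingly more bookkeeping about which intersections are local direct summands. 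The paper's route is a little cleaner here.

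The one place your write-up overclaims is the assertion that \(\image(\beta|_{N_1})\) is a local direct summand ``once the rank of \(\beta|_{N_1}\) is seen to be locally constant.'' That rank is \emph{not} forced to be locally constant by the stated hypotheses: with \(R = \kk[t]\), \(M_1 = R^2\), \(M_2^\vee = R\), \(\beta(a,b) = b\), and \(N_1 = R\cdot(1,t)\), both \(\image\beta = R\) and \(N_1\) are direct summands, yet \(\image(\beta|_{N_1}) = tR\) is not, and sequence \ref{forms-orthogonal-sequence.submodule} genuinely fails since \(N_1^\perp = 0\) while \(\coker(\beta|_{N_1}) = R/tR\). The paper is equally terse at exactly this step (``now follows upon restricting \(\beta\) to \(N_1\)''), and every subsequent use of the lemma is over a field, where---as you rightly note at the end---all of this is automatic. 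So your closing caveat is well placed; just do not assert that the general-\(R\) case follows from the hypotheses as written.
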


\begin{proof}
For \ref{forms-orthogonal-sequence.everything}, it remains to identify
\(\coker(\beta)\) with \(M_1^{\perp,\vee}\) for the kernel-cokernel exact
sequence of \(\beta \colon M_1 \to M_2^\vee\). The hypotheses imply that the
cokernel is a local direct summand of \(M_2^\vee\), so that dualizing the
sequence yields an exact sequence
\[
0 \to
\coker(\beta)^\vee \to
M_2 \xrightarrow{\beta^\vee}
M_1^\vee \to
M_2^{\perp,\vee} \to
0.
\]
This identifies \(\coker(\beta)^\vee\) as \(\ker(\beta^\vee) \eqqcolon M_1^\perp\).
Reflexivity shows \(\coker(\beta) = M_1^{\perp,\vee}\). Sequence
\ref{forms-orthogonal-sequence.submodule} now follows upon restricting
\(\beta\) to \(N_1\). Comparing this sequence for nested submodules
\(N_1' \subseteq N_1\) then gives a commutative diagram of exact sequences
\[
\begin{tikzcd}[row sep=0.7em]
0 \rar
& N_1' \cap M_2^\perp \rar \dar[symbol={\subseteq}]
& N_1' \rar \dar[symbol={\subseteq}]
& (M_2/N_1'^\perp)^\vee \rar \dar[symbol={\subseteq}]
& 0 \\
0 \rar
& N_1 \cap M_2^\perp \rar
& N_1 \rar
& (M_2/N_1^\perp)^\vee \rar
& 0
\end{tikzcd}
\]
from which \ref{forms-orthogonal-sequence.nested} follows upon taking cokernels.

For \ref{forms-orthogonal-reflexive}, note that
\(
N_1^{\perp,\perp}
= \ker(\beta \colon M_1 \to M_2^\vee \to N_1^{\perp,\vee})
= M_2^\perp + \beta^{-1}(\ker(M_2^\vee \to N_1^{\perp,\vee}))
\),
so that the result follows from \ref{forms-orthogonal-sequence.submodule},
which shows that the second term is \(N_1\).

For \ref{forms-orthogonal-cap-is-sum}, apply
\parref{forms-orthogonal-inclusion-reversing}\ref{forms-orthogonal-inclusion-reversing.sum}
to \(N_1^\perp\) and \(N_1'^\perp\), take an orthogonal, apply
\ref{forms-orthogonal-reflexive}, and use the assumption \(M_2^\perp \subseteq N_1\)
to obtain the first equality in
\[
N_1^\perp + N_1'^\perp =
(N_1 \cap (N_1' + M_2^\perp))^\perp =
(N_1 \cap N_1' + M_2^\perp)^\perp =
(N_1 \cap N_1')^\perp
\]
The second equality comes from \parref{forms-orthogonal-intersection-sum} below,
and the final equality comes from
\parref{forms-orthogonal-inclusion-reversing}\ref{forms-orthogonal-inclusion-reversing.sum}
and \ref{forms-orthogonal-reflexive}.
\end{proof}

\begin{Lemma}\label{forms-orthogonal-intersection-sum}
Let \(M\) be an \(R\)-module and \(K, N, N' \subseteq M\) submodules. If
\(K \subseteq N\), then
\[
N \cap (N' + K) = N \cap N' + K.
\]
\end{Lemma}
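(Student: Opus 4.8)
The statement is the classical modular (Dedekind) law for submodules of a module, and the plan is simply to verify the two inclusions directly on elements; no finiteness, freeness, or flatness hypotheses enter, so the argument works over the arbitrary base ring \(R\).

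The inclusion \(N \cap N' + K \subseteq N \cap (N' + K)\) is formal and does not use the hypothesis: since \(N \cap N' \subseteq N\) and \(K \subseteq N\), the sum \(N \cap N' + K\) lies in \(N\); and since \(N \cap N' \subseteq N'\), that same sum lies in \(N' + K\). Hence it lies in the intersection.

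For the reverse inclusion \(N \cap (N' + K) \subseteq N \cap N' + K\), I would take \(x \in N \cap (N' + K)\) and write \(x = n' + k\) with \(n' \in N'\) and \(k \in K\). The hypothesis \(K \subseteq N\) enters precisely here: \(n' = x - k\) is a difference of elements of \(N\) — namely \(x \in N\) and \(k \in K \subseteq N\) — so \(n' \in N \cap N'\), whence \(x = n' + k \in N \cap N' + K\).

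There is no genuine obstacle; the only point requiring a moment's attention is that one must use \(K \subseteq N\), and not merely \(K \subseteq N + N'\) or the like, to pull \(n'\) back into \(N\). This suffices for the applications, in particular for the second equality in the displayed chain in the proof of \parref{forms-orthogonal-sequence}, where it is applied with \(K = M_2^\perp\) and \(N = N_1 \supseteq M_2^\perp\).
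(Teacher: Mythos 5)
Your proof is correct and is essentially identical to the paper's: both verify the easy inclusion directly and, for the reverse, write \(x = n' + k\) and use \(K \subseteq N\) to conclude \(n' = x - k \in N \cap N'\). Nothing further is needed.
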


\begin{proof}
The inclusion ``\(\supseteq\)'' always holds and is clear.
For ``\(\subseteq\)'', let \(x \in N \cap (N' + K)\) and view it as an element
of \(N\). Being in the intersection means there are
\(x' \in N'\) and \(y \in K\) such that \(x = x' + y\). But
\(x' = x - y \in N\) since \(K \subseteq N\), so \(x' \in N \cap N'\).
This shows \(x \in N \cap N' + K\), as required.
\end{proof}

\subsectiondash{Frobenius twists and orthogonals}\label{forms-fr-twist-orthogonal}
Return to the situation of a \(q\)-bic form \((M,\beta)\) over \(R\).
Twisting by Frobenius yields a sequence of associated \(q\)-bic forms
\((M^{[i]},\beta^{[i]})\) for each integer \(i \geq 0\); the \(i\)-th Frobenius
twisted form is characterized by the formula
\[
\beta^{[i]}(m'^{[i]}, m^{[i]}) = \beta(m',m)^{q^i}
\quad\text{for every}\;
m \in M \;\text{and}\; m' \in M^{[1]}.
\]
Observe that the module \(M^{[i]}\) is involved with the
two Frobenius twists of \(\beta\), namely \(\beta^{[i-1]}\) and
\(\beta^{[i]}\). Accordingly, a submodule \(N \subseteq M^{[i]}\) induces two
orthogonals:
\begin{align*}
N^{\perp^{[i-1]}}
& \coloneqq
\ker\big(\beta^{[i-1]} \colon
M^{[i-1]} \to M^{[i],\vee} \to N^\vee\big),\;\text{and} \\
N^{\perp^{[i]}}
& \coloneqq
\ker\big(\beta^{[i],\vee} \colon
M^{[i+1]} \to M^{[i],\vee} \to N^\vee\big).
\end{align*}
These are the \((i-1)\)-th and \(i\)-th \emph{Frobenius-twisted
orthogonals of \(N\)}.

In this setting, suppose further that taking Frobenius twists commutes with
taking kernels: for instance, suppose that \(R\) is regular so that the
Frobenius morphism is flat by \cite{Kunz}; or suppose that the image in
\(N^\vee\) of \(\beta^{[i-1]}\) or \(\beta^{[i],\vee}\) has vanishing
\(\Tor_1^R\). Then the Frobenius-twisted orthogonal commutes with taking
Frobenius twists in the following sense: for any integer \(j \geq 0\),
\[
N^{\perp^{[i-1]},[j]} = N^{[j],\perp^{[i+j-1]}}
\quad\text{and}\quad
N^{\perp^{[i]},[j]} = N^{[j],\perp^{[i+j]}}
\]
as submodules of \(M^{[i+j-1]}\) and \(M^{[i+j+1]}\), respectively.

\subsectiondash{Total orthogonals and complements}\label{forms-orthogonal-complements}
The \emph{total orthogonal} of a submodule \(N \subseteq M\) is
\[
N^\perp \cap N^{[2],\perp^{[1]}}
= \Set{m' \in M^{[1]}
| \beta(m',n) = \beta^{[1]}(n^{[2]}, m') = 0\;\text{for all}\; n \in N}.
\]
This is a submodule of \(M^{[1]}\) and, in general, it need not have a
Frobenius descent to \(M\). The \emph{radical} of \((M,\beta)\) is
the total orthogonal of \(M\) itself, and may be written as
\[
\operatorname{rad}(\beta) =
\Set{m' \in M^{[1]} |
\beta(m', m) = \beta^{[1]}(m^{[2]}, m') = 0\;
\text{for all}\; m \in M}.
\]
In particular, \(\beta^{[1]}\) passes to the quotient and induces a \(q\)-bic
form on \(M^{[1]}/\operatorname{rad}(\beta)\).

An \emph{orthogonal complement} to a submodule \(M'\) is a submodule
\(M''\) such that \(M = M' \oplus M''\) and \(M''^{[1]}\) lies in the
total orthogonal of \(M'\). This situation is signified by
\[ (M,\beta) = (M',\beta') \perp (M'',\beta'') \]
where \(\beta'\) and \(\beta''\) denote the restriction of \(\beta\) to
\(M'\) and \(M''\), respectively; when the underlying modules are understood,
also write \(\beta = \beta' \oplus \beta''\). Orthogonal complements need
not exist, and when they do, need not be unique: one exists if and only if
the total orthogonal contains a module complement that descends to \(M\); it is
unique if and only if the total orthogonal is the complement.

\section{Hermitian forms}\label{section-hermitian}
A \(q\)-bic form \((M,\beta)\) over \(R\) linearizes a biadditive map
\(M \times M \to R\) which is \(R\)-linear in the second
variable, but only \(q\)-linear in the first. In the case that
\(R = \mathbf{F}_{q^2}\), such forms are sesquilinear with respect the
\(q\)-power Frobenius, and the Hermitian property gives a notion of symmetry.
In general, the Hermitian condition does not make sense. This Section identifies
a distinguished subset of any \(q\)-bic form consisting of those elements
that satisfy the Hermitian equation. This construction provides an invariant of
\(q\)-bic forms, sensitive to the arithmetic of its ring of definition.

\subsectiondash{Hermitian elements}\label{forms-hermitian}
An element \(m \in M\) is said to be \emph{Hermitian} if
\[
\beta(-,m) =
\beta^{[1]}(m^{[2]},-)\;
\text{as elements of \(M^{[1],\vee}\)}
\]
or equivalently, if \(\beta(n^{[1]},m) = \beta(m^{[1]},n)^q\) for all \(n \in M\),
see \parref{forms-fr-twist-orthogonal}.
It is straightforward to check that the set \(M_{\mathrm{Herm}}\) of Hermitian
elements is a vector space over \(\mathbf{F}_{q^2}\) and that
\(\beta(m_1^{[1]}, m_2) \in \mathbf{F}_{q^2}\) for every
\(m_1,m_2 \in M_{\mathrm{Herm}}\). Therefore the restriction of \(\beta\)
to the space of Hermitian elements gives
\[
\beta_{\mathrm{Herm}} \colon
M^{[1]}_{\mathrm{Herm}} \otimes_{\mathbf{F}_{q^2}} M_{\mathrm{Herm}} \to
\mathbf{F}_{q^2}
\]
a Hermitian form for the quadratic extension
\(\mathbf{F}_{q^2}/\mathbf{F}_q\). This partially justifies the nomenclature:
if \(M\) is a free module with basis \(\langle e_1,\ldots,e_n \rangle\)
consisting of Hermitian elements, then the associated Gram matrix is a
Hermitian matrix over \(\mathbf{F}_{q^2}\) in that it satisfies
\[
\Gram(\beta;e_1,\ldots,e_n)^\vee =
\Gram(\beta;e_1,\ldots,e_n)^{[1]}.
\]

The space \(M_{\mathrm{Herm}}\) may not be of finite dimension over
\(\mathbf{F}_{q^2}\). For instance, it contains the preimage in \(M\) of the
radical of \(\beta\). It is therefore helpful to note that taking Hermitian
vectors is compatible with orthogonal decompositions:

\begin{Lemma}\label{forms-hermitian-basics-orthogonals}
An orthogonal decomposition
\((M,\beta) = (M',\beta') \perp (M'',\beta'')\)
induces an orthogonal decomposition of Hermitian spaces
\[
(M_{\mathrm{Herm}},\beta_{\mathrm{Herm}}) =
(M_{\mathrm{Herm}}',\beta_{\mathrm{Herm}}') \perp
(M_{\mathrm{Herm}}'',\beta_{\mathrm{Herm}}'').
\]
\end{Lemma}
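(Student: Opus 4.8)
The plan is to verify the two defining properties of an orthogonal decomposition (a direct sum of modules, and one Frobenius twist contained in the other's total orthogonal) directly on the subspaces of Hermitian vectors. First I would record the concrete content of the hypothesis: by \parref{forms-orthogonal-complements}, the decomposition \((M,\beta) = (M',\beta')\perp(M'',\beta'')\) amounts to \(M = M'\oplus M''\) together with \(M''^{[1]}\) lying in the total orthogonal \(M'^\perp\cap M'^{[2],\perp^{[1]}}\) of \(M'\); unwinding the two orthogonals and applying the twist formula \(\beta^{[1]}(n^{[2]},m''^{[1]}) = \beta(n^{[1]},m'')^q\) of \parref{forms-fr-twist-orthogonal}, this is equivalent to the symmetric pair of vanishings \(\beta(m''^{[1]},m') = 0 = \beta(m'^{[1]},m'')\) for all \(m'\in M'\) and \(m''\in M''\).

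The key step is then to show \(M_{\mathrm{Herm}} = M'_{\mathrm{Herm}}\oplus M''_{\mathrm{Herm}}\), compatibly with the inclusions into \(M\). Given \(m = m'+m''\in M\) written according to \(M = M'\oplus M''\), I would test the Hermitian equation \(\beta(n^{[1]},m) = \beta(m^{[1]},n)^q\) against \(n\in M'\) and against \(n\in M''\) in turn: in each case the two mixed pairings vanish by the conditions above, so the equation for \(m\) restricted to \(M'\) (resp.\ \(M''\)) becomes exactly the Hermitian equation for \(m'\) with respect to \(\beta'\) (resp.\ for \(m''\) with respect to \(\beta''\)). Taking \(m''=0\) this already shows that a vector of \(M'\) is \(\beta\)-Hermitian if and only if it is \(\beta'\)-Hermitian, so that \(M'_{\mathrm{Herm}} = M'\cap M_{\mathrm{Herm}}\) and likewise for \(M''\); and for general \(m\in M_{\mathrm{Herm}}\) it shows \(m'\in M'_{\mathrm{Herm}}\) and \(m''\in M''_{\mathrm{Herm}}\). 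For the reverse inclusion one expands \(\beta((n'+n'')^{[1]},m'+m'')\) and \(\beta((m'+m'')^{[1]},n'+n'')^q\), cancels the four mixed terms, and uses additivity of the \(q\)-power Frobenius to see that the Hermitian equations for \(m'\) and for \(m''\) sum to the Hermitian equation for \(m'+m''\); directness of the decomposition is inherited from \(M = M'\oplus M''\).

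It then remains to note that the restriction of \(\beta_{\mathrm{Herm}}\) to \(M'_{\mathrm{Herm}} = M'\cap M_{\mathrm{Herm}}\) is the Hermitian form \(\beta'_{\mathrm{Herm}}\) attached to \(\beta'\) (and similarly for \(M''\)), and that the inclusion of \(M''^{[1]}_{\mathrm{Herm}}\) into the total orthogonal of \(M'_{\mathrm{Herm}}\) with respect to \(\beta_{\mathrm{Herm}}\) is immediate from \(M'_{\mathrm{Herm}}\subseteq M'\), \(M''_{\mathrm{Herm}}\subseteq M''\), and the orthogonality of \(M'\) and \(M''\). I do not expect a genuine obstacle here; the one point requiring care is that the total orthogonal is cut out by both \(\beta\) and its Frobenius twist, so one must extract the vanishing \(\beta(m'^{[1]},m'') = 0\) from the \(\beta^{[1]}\)-condition — which is precisely what allows the Hermitian equation, itself involving both orders of its arguments, to split cleanly over the decomposition.
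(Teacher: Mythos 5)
Your proposal is correct and follows the same route as the paper: decompose a Hermitian $m = m'+m''$, test the Hermitian equation against $n\in M'$ (resp.\ $n\in M''$), and use the vanishing of both mixed pairings — extracted from the two conditions defining the total orthogonal via the Frobenius-twist formula — to reduce to the Hermitian equation for $m'$ (resp.\ $m''$). The paper's proof is just a terser version of your key step, leaving the converse inclusion and the identification of the restricted forms implicit as you also note they are routine.
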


\begin{proof}
Let \(m \in M_{\mathrm{Herm}}\), and let \(m = m' + m''\) be its
decomposition with \(m' \in M'\) and \(m'' \in M''\). Consider \(m'\).
Since \(M'\) and \(M''\) are orthogonal, for every \(n \in M'\),
\[
\beta'(n^{[1]}, m')
= \beta(n^{[1]}, m' + m'')
= \beta((m' + m'')^{[1]}, n)^q
= \beta'(m'^{[1]},n)^q.
\]
Therefore \(m' \in M_{\mathrm{Herm}}'\). An analogous
argument shows \(m'' \in M_{\mathrm{Herm}}''\).
\end{proof}

In the remainder of this Section, let
\((V,\beta)\) be a \(q\)-bic form over a field \(\kk\) containing \(\mathbf{F}_{q^2}\).

\subsectiondash{Scheme of Hermitan vectors}\label{forms-hermitian-equations}
The subgroup \(V_{\mathrm{Herm}}\) of Hermitian
vectors of \(V\) may be endowed with the structure of a closed subgroup scheme
of the affine space \(\mathbf{A} V \coloneqq \Spec\Sym V^\vee\) on \(V\).
Namely, the functor taking a \(\kk\)-algebra \(R\) to the group of Hermitian
elements of the \(q\)-bic form \((V \otimes_\kk R, \beta \otimes_\kk R)\) over
\(R\) obtained by extension of scalars is represented by the closed subscheme
\[
\mathbf{A}V_{\mathrm{Herm}} =
\mathrm{V}\big(\beta(\phantom{-},-) - \beta^{[1]}(-^{[2]},\phantom{-})\big)
\]
cut out by the difference of two morphisms
\(\mathbf{A} V \to \mathbf{A} V^{[1],\vee}\) which are induced by the linear maps
\[
v \mapsto
\beta(\phantom{-},v) \in \Hom_\kk(V^{[1]}, V^\vee)
\quad\text{and}\quad
v \mapsto
\beta^{[1]}(v^{[2]},\phantom{-}) \in \Hom_\kk(V^{[1]}, V^{\vee,[2]}),
\]
where \(V^{\vee,[2]}\) is identified as the subspace of
\(q^2\)-powers in \(\Sym^{q^2}(V^\vee) \subset \Gamma(\mathbf{A} V, \sO_{\mathbf{A} V})\).
Concretely, choose a basis \(V = \langle e_1,\ldots,e_n \rangle\),
let \(B \coloneqq \Gram(\beta;e_1,\ldots,e_n)\) be the associated Gram matrix,
and let \(\mathbf{x}^\vee \coloneqq (x_1,\ldots,x_n)\) be the associated
coordinates for \(\mathbf{A}V \cong \mathbf{A}^n\).
Then \(\mathbf{A}V_{\mathrm{Herm}}\) is the closed subscheme given by
the system of equations
\[ B \mathbf{x} - B^{[1],\vee} \mathbf{x}^{[2]} = 0. \]
Observe furthermore that, by \parref{forms-hermitian}, \(\mathbf{A}
V_{\mathrm{Herm}}\) is a scheme of vector spaces over \(\mathbf{F}_{q^2}\), and
is equipped with a Hermitian bilinear form
\[
\mathbf{A}\beta_{\mathrm{Herm}} \colon
\mathbf{A} V_{\mathrm{Herm}}^{[1]} \times_\kk
\mathbf{A} V_{\mathrm{Herm}} \to
\mathbf{F}_{q^2, \kk}
\]
with values in the constant group scheme associated with \(\mathbf{F}_{q^2}\).

\subsectiondash{Examples}\label{forms-hermitian-examples}
Schemes of Hermitian vectors for the standard forms of \parref{forms-standard}
are determined with the help of \parref{forms-hermitian-basics-orthogonals}
by the following two computations:

First, let \((V,\beta) = (\mathbf{k}^{\oplus n},\mathbf{N}_n)\) be
the standard form with just one nilpotent block. Then
\[
\mathbf{A}V_{\mathrm{Herm}} =
\Spec
\kk[x_1,\ldots,x_n]/(x_2,x_3 - x_1^{q^2},\ldots, x_n - x_{n-2}^{q^2},x_{n-1}^{q^2}).
\]
The structure of this scheme depends on the parity of \(n\):
\[
\frac{\kk[x_1,\ldots,x_n]}{(x_2,x_3 - x_1^{q^2},\ldots,x_n- x_{n-2}^{q^2},x_{n-1}^{q^2})}
\cong
\begin{dcases*}
\kk[x_1] & if \(n\) is odd, and \\
\kk[x_1]/(x_1^{q^{n}}) & if \(n\) is even.
\end{dcases*}
\]
Indeed, the first \(n-1\) equations imply that the even-indexed
variables vanish, and that \(x_{2k+1}\) is the \(q^{2k}\)-power
of \(x_1\). As for the final equation: when \(n\) is odd, this is implied by
the vanishing of the even-indexed variables, whereas when \(n\) is even, this
shows that \(x_{n-1}^{q^2} = x_1^{q^n} = 0\).

Second, let \(V = \langle e \rangle\) be a \(1\)-dimensional vector space
and let \(\beta(e^{[1]}, e) = \lambda\) for some \(\lambda \in \kk\). Then
\[
\mathbf{A}V_{\mathrm{Herm}} =
\Spec\kk[x]/(\lambda x - \lambda^q x^{q^2}).
\]
This is a form of the constant group scheme on \(\mathbf{F}_{q^2}\) that
splits with a \((q+1)\)-th root of \(\lambda\). By
\parref{forms-hermitian-basics-orthogonals}, this determines the structure the
scheme of Hermitian vectors for all diagonal \(q\)-bic forms. This also suggests
that the scheme is quite simple for all nonsingular \(q\)-bic forms. Indeed:

\begin{Proposition}\label{forms-hermitian-nondegenerate}
If \((V,\beta)\) is a nonsingular \(q\)-bic form, then \(\mathbf{A}V_{\mathrm{Herm}}\) is an
\'etale group scheme of degree \(q^2n\) over \(\kk\), geometrically isomorphic
to the constant group scheme associated with \(\mathbf{F}_{q^2}^{\oplus n}\).
\end{Proposition}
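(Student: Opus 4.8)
The plan is to choose a basis, use nonsingularity to bring the defining equations of \(\mathbf{A}V_{\mathrm{Herm}}\) into a simple shape, and then read off the three assertions almost formally. Fix a basis \(V = \langle e_1,\dots,e_n\rangle\), let \(B \coloneqq \Gram(\beta;e_1,\dots,e_n)\), and let \(x_1,\dots,x_n\) be the induced coordinates on \(\mathbf{A}V \cong \mathbf{A}^n_\kk\). By \parref{forms-hermitian-equations}, \(\mathbf{A}V_{\mathrm{Herm}} = \Spec A\) with \(A\) the quotient of \(\kk[x_1,\dots,x_n]\) by the entries of \(B\mathbf{x} - B^{[1],\vee}\mathbf{x}^{[2]}\). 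Nonsingularity of \(\beta\) means \(B\), and hence also \(B^{[1],\vee}\), lies in \(\GL_n(\kk)\); the ideal is then equally generated by the entries of \((B^{[1],\vee})^{-1}(B\mathbf{x} - B^{[1],\vee}\mathbf{x}^{[2]})\), so
\[
A \cong \kk[x_1,\dots,x_n]/(g_1,\dots,g_n),
\qquad
g_i \coloneqq x_i^{q^2} - \textstyle\sum_{j=1}^n d_{ij}x_j,
\]
where \(D = (d_{ij}) \coloneqq (B^{[1],\vee})^{-1}B \in \GL_n(\kk)\).

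From this presentation both numerical claims follow. Rewriting any factor \(x_i^{q^2}\) via \(g_i\) replaces a monomial by a \(\kk\)-combination of monomials of strictly smaller total degree, so \(A\) is spanned over \(\kk\) by the monomials \(x_1^{a_1}\cdots x_n^{a_n}\) with \(0 \leq a_i < q^2\); in particular \(A\) is a finite \(\kk\)-module. These monomials are moreover linearly independent: in any graded monomial order the \(g_i\) have pairwise coprime leading monomials \(x_i^{q^2}\), so by Buchberger's criterion they form a Gr\"obner basis of the ideal they generate, the initial ideal is \((x_1^{q^2},\dots,x_n^{q^2})\), and hence
\[
\dim_\kk A = \dim_\kk \kk[x_1,\dots,x_n]/(x_1^{q^2},\dots,x_n^{q^2}) = q^{2n}.
\]
Moreover \(A\) is \'etale over \(\kk\): since \(q^2\) is a power of \(\operatorname{char}\kk\), the derivative of \(x_i^{q^2}\) vanishes, so the Jacobian \((\partial g_i/\partial x_j)_{i,j}\) equals \(-D \in \GL_n(A)\); hence \(\Omega_{A/\kk} = 0\), so \(\Spec A \to \Spec\kk\) is unramified, and being finite and therefore flat over a field, it is \'etale. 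Thus \(\mathbf{A}V_{\mathrm{Herm}}\) is an \'etale group scheme of degree \(q^{2n}\) over \(\kk\).

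For the geometric description, base change to \(\kk^{\mathrm{sep}}\). A finite \'etale scheme over a separably closed field is a disjoint union of copies of the base, so \(\mathbf{A}V_{\mathrm{Herm}} \otimes_\kk \kk^{\mathrm{sep}}\) is the constant scheme on a set \(S\) with \(\#S = q^{2n}\). By \parref{forms-hermitian-equations} this is a scheme of vector spaces over \(\mathbf{F}_{q^2}\), and such a structure on a constant scheme is the same datum as an \(\mathbf{F}_{q^2}\)-vector space structure on \(S\); since \(\#S = q^{2n}\) this forces \(S \cong \mathbf{F}_{q^2}^{\oplus n}\). Hence \(\mathbf{A}V_{\mathrm{Herm}}\) is isomorphic to the constant group scheme associated with \(\mathbf{F}_{q^2}^{\oplus n}\) already over \(\kk^{\mathrm{sep}}\), and a fortiori over \(\bar\kk\).

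The only step needing care is pinning the degree down to exactly \(q^{2n}\), rather than to the bound \(\leq q^{2n}\) coming from the spanning set; the \'etale assertion and the identification of the group scheme are then formal, the Jacobian computation working precisely because \(q^2\) is a power of \(\operatorname{char}\kk\). Alternatively, one may bypass the degree computation using the fact (see \parref{forms-hermitian-diagonal}) that a nonsingular \(q\)-bic form over a separably closed field admits an orthonormal basis: in such a basis \(B = \mathbf{1}^{\oplus n}\), so \(D = \mathbf{1}\), the equations read \(x_i^{q^2} = x_i\), and \(\mathbf{A}V_{\mathrm{Herm}} \otimes_\kk \kk^{\mathrm{sep}}\) is visibly the constant group scheme associated with \(\mathbf{F}_{q^2}^{\oplus n}\).
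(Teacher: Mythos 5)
Your main argument is correct and follows the same route as the paper: rewrite the equations using nonsingularity as \(\mathbf{x}^{[2]} = B^{[1],\vee,-1}B\mathbf{x}\), deduce \'etaleness from the invertible (linear) Jacobian, and identify the geometric fibre via the \(\mathbf{F}_{q^2}\)-vector space structure from \parref{forms-hermitian}; your Gr\"obner-basis count of the degree \(q^{2n}\) just makes explicit what the paper leaves implicit. One caution: the ``alternative'' route via \parref{forms-hermitian-diagonal} is circular, since that corollary is deduced from the present proposition (through \parref{forms-hermitian-nondegenerate-span}), so it cannot be used here.
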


\begin{proof}
Since \(\beta\) is nonsingular, the equations for
\(\mathbf{A}V_{\mathrm{Herm}}\) in \parref{forms-hermitian-equations} may be
written as
\[ \mathbf{x}^{[2]} = B^{[1],\vee,-1} B \mathbf{x}. \]
This is a system of \(n\) equations in \(n\) variables, with Jacobian equations
the linear system given by \(B^{[1],\vee,-1} B\). Since \(\beta\) is
nonsingular, this is of full rank, and so \(\mathbf{A}V_{\mathrm{Herm}}\) is
\'etale of degree \(q^2n\) over \(\kk\).
Since the group of points \(\mathbf{A}V_{\mathrm{Herm}}(\kk^{\mathrm{sep}})\)
is a vector space over \(\mathbf{F}_{q^2}\) by \parref{forms-hermitian},
\(\mathbf{A} V_{\mathrm{Herm}}\) must be a form of the constant group scheme
associated with \(\mathbf{F}_{q^2}^{\oplus n}\), see \cite[2.16]{Milne:AlgGroups}.
\end{proof}

This implies that a nonsingular \(q\)-bic form is spanned by its Hermitian
vectors after a finite separable field extension. This is not true in general:
\parref{forms-hermitian-examples} shows that a form of type
\(\mathbf{N}_{2k}\) has no nonzero Hermitian vectors.

\begin{Proposition}\label{forms-hermitian-nondegenerate-span}
If \((V,\beta)\) is a nonsingular \(q\)-bic form over a separably closed field
\(\kk\), then the natural map
\(V_{\mathrm{Herm}} \otimes_{\mathbf{F}_{q^2}} \kk \to V\) is an isomorphism.
\end{Proposition}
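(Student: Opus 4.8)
The plan is to leverage Proposition \parref{forms-hermitian-nondegenerate}, which already tells us that $\mathbf{A}V_{\mathrm{Herm}}$ is an \'etale group scheme of degree $q^2n$ over $\kk$, geometrically isomorphic to the constant group scheme on $\mathbf{F}_{q^2}^{\oplus n}$. Since $\kk$ is separably closed, an \'etale $\kk$-scheme is a disjoint union of copies of $\Spec\kk$, so $V_{\mathrm{Herm}} = \mathbf{A}V_{\mathrm{Herm}}(\kk)$ has exactly $q^2n$ elements; being a vector space over $\mathbf{F}_{q^2}$ by \parref{forms-hermitian}, it must have $\mathbf{F}_{q^2}$-dimension exactly $n = \dim_\kk V$.

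The natural comparison map $c \colon V_{\mathrm{Herm}} \otimes_{\mathbf{F}_{q^2}} \kk \to V$ sends $m \otimes \lambda$ to $\lambda m$; it is $\kk$-linear, and both sides have $\kk$-dimension $n$. So it suffices to prove that $c$ is injective, equivalently that any $\mathbf{F}_{q^2}$-linearly independent subset of $V_{\mathrm{Herm}}$ remains $\kk$-linearly independent in $V$. First I would reduce to the case of a single relation: suppose $\sum_{i=1}^r \lambda_i m_i = 0$ with $m_i \in V_{\mathrm{Herm}}$ that are $\mathbf{F}_{q^2}$-linearly independent and $r$ minimal among nontrivial $\kk$-relations; rescaling, take $\lambda_r = 1$. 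Applying the Frobenius-semilinear structure: for any $n \in V$, Hermitian-ness gives $\beta(n^{[1]}, m_i) = \beta(m_i^{[1]}, n)^q$, and more usefully $\beta(m_j^{[1]}, m_i) \in \mathbf{F}_{q^2}$ for all $i,j$. I would now pair the relation $\sum_i \lambda_i m_i = 0$ against $m_j^{[1]}$ using $\beta$ on the second variable: since $\beta(m_j^{[1]}, -)$ is $\kk$-linear, $\sum_i \lambda_i \beta(m_j^{[1]}, m_i) = 0$ for each $j$. As $\beta$ is nonsingular and the $m_i^{[1]}$ span a space of dimension $r$ inside $V^{[1]}$ (using that Frobenius twist is injective on a linearly independent set over a field), the $r \times r$ matrix $\big(\beta(m_j^{[1]}, m_i)\big)$ — a submatrix-type pairing of the nonsingular $\beta$ restricted to $\langle m_1,\ldots,m_r\rangle$ — is itself... here one must be a little careful, since the $m_i$ need not span a nonsingular subform.

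The cleaner route, which I would actually pursue, avoids this subtlety: apply the $q$-power Frobenius to the relation $\sum_{i=1}^{r} \lambda_i m_i = 0$ regarded within $V^{[1]}$, i.e. pass to $\sum_i \lambda_i^q m_i^{[1]} = 0$ in $V^{[1]}$. Then hit this with $\beta^\vee \colon V^{[1]} \to V^\vee$, which is an isomorphism by nonsingularity, to get $\sum_i \lambda_i^q \beta^\vee(m_i^{[1]}) = 0$ in $V^\vee$. Evaluating at an arbitrary $n \in V$ and using the Hermitian identity $\beta^\vee(m_i^{[1]})(n) = \beta(m_i^{[1]}, n) = \beta(n^{[1]}, m_i)^{1/q}$... the exponents get awkward, so instead I evaluate the original relation: from $\sum_i \lambda_i m_i = 0$ apply $\beta \colon V \to V^{[1],\vee}$ to obtain $\sum_i \lambda_i \beta(m_i) = 0$, then evaluate at $m_j^{[1]}$ to get $\sum_i \lambda_i \beta(m_i^{[1]}, ?)$ — cleanest is simply: evaluate $\sum_i \lambda_i \beta(m_i) = 0$ at a basis of $V^{[1]}$ of the form $e_k^{[1]}$, giving $\sum_i \lambda_i \beta(m_i^{[1]}, e_k) $ — no. Let me state the genuinely clean argument: from $\sum_i \lambda_i m_i = 0$ in $V$ and $\sum_i \lambda_i^q m_i^{[1]} = 0$ in $V^{[1]}$ (Frobenius twist of the same relation), feed the latter through $\beta^\vee$ and evaluate at each $m_j$: $0 = \sum_i \lambda_i^q \beta(m_i^{[1]}, m_j) = \sum_i \lambda_i^q \beta(m_j^{[1]}, m_i)^q = \big(\sum_i \lambda_i \beta(m_j^{[1]}, m_i)\big)^q$, using the Hermitian identity and that $x \mapsto x^q$ is additive. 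Hence $\sum_i \lambda_i \beta(m_j^{[1]}, m_i) = 0$ for all $j$, which is exactly the statement that $\sum_i \lambda_i m_i$ is orthogonal to $\langle m_1^{[1]}, \ldots, m_r^{[1]}\rangle$; but $\sum_i \lambda_i m_i = 0$ already, so this is vacuous. The non-vacuous input: repeat with $n$ ranging over \emph{all} of $V$, i.e. $0 = \beta^\vee(\sum_i \lambda_i^q m_i^{[1]})$ as an element of $V^\vee$, and nonsingularity of $\beta^\vee$ forces $\sum_i \lambda_i^q m_i^{[1]} = 0$. By minimality of $r$ (applied to the $\mathbf{F}_{q^2}$-independent $m_i$, noting $\lambda_i \mapsto \lambda_i^q$ is a bijection fixing $0$), all $\lambda_i^q = 0$, hence all $\lambda_i = 0$, a contradiction. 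Therefore $c$ is injective, hence an isomorphism by the dimension count.

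The main obstacle is organizational rather than deep: correctly bookkeeping the Frobenius twists so that the Hermitian identity $\beta(n^{[1]}, m) = \beta(m^{[1]}, n)^q$ is applied with matching source and target modules, and making sure the reduction to a minimal relation interacts properly with the $q$-power bijection on scalars. Once the identity $\sum_i \lambda_i^q m_i^{[1]} = 0 \Rightarrow \sum_i \lambda_i m_i^{[1]} = 0$ is in hand (trivially, since $\Fr$ is injective on $\kk$ — wait, $m_i^{[1]}$ are already twisted, so one needs that the map $V \to V^{[1]}$ is injective, which holds over a field), the argument collapses. I would also record, as a corollary worth stating, that this recovers and refines \parref{forms-hermitian-nondegenerate}'s span assertion: $\beta_{\mathrm{Herm}}$ is then a nonsingular Hermitian form whose extension of scalars to $\kk$ recovers $(V,\beta)$, which is precisely the mechanism behind the Galois-cohomological form of the classification theorem \parref{forms-classification-theorem}.
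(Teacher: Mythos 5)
The dimension count via \parref{forms-hermitian-nondegenerate} is fine, and the reduction to injectivity of the comparison map matches the paper. But the ``genuinely clean argument'' you settle on is circular and does not close. The relation \(\sum_i \lambda_i^q m_i^{[1]} = 0\) is nothing other than the image of your original relation \(\sum_i \lambda_i m_i = 0\) under the canonical \(q\)-linear injection \(V \to V^{[1]}\); it carries no new information. Feeding it through \(\beta^\vee\) to get \(0\) in \(V^\vee\) and then invoking nonsingularity of \(\beta^\vee\) merely returns you to the statement \(\sum_i \lambda_i^q m_i^{[1]} = 0\) that you started from. The final appeal to minimality of \(r\) to conclude ``all \(\lambda_i^q = 0\)'' is a non sequitur: the twisted relation lives in \(V^{[1]}\), has exactly the same number of nonzero coefficients as the original, and produces no shorter relation and no contradiction. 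At no point do you extract the one fact that makes the proof work, namely that some coefficient of the relation actually lies in \(\mathbf{F}_{q^2}\), which is the only way the hypothesis of \(\mathbf{F}_{q^2}\)-linear independence of the \(m_i\) can ever be brought to bear against a \(\kk\)-linear relation.

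The missing idea, which is the heart of the paper's proof, is to choose a test vector adapted to the relation: writing the minimal relation as \(v_{m+1} = a_1 v_1 + \cdots + a_m v_m\) with \(v_1,\ldots,v_m\) linearly independent in \(V\), nonsingularity of \(\beta\) produces \(w \in V\) with \(\beta(w^{[1]},v_i) = 0\) for \(i < m\) and \(\beta(w^{[1]},v_m) = 1\). Pairing the relation against \(w\) and applying the Hermitian identity \emph{twice} (once to move \(w\) across \(\beta\), once to move it back) yields \(a_m = a_m^{q^2}\), so \(a_m \in \mathbf{F}_{q^2}\); then \(v_{m+1} - a_m v_m\) is again Hermitian and the relation shortens, contradicting minimality. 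Your first, abandoned attempt (pairing against the \(m_j^{[1]}\)) was actually closer to this in spirit --- the fix is not to pair against the \(m_j\) themselves, whose span need not support a nonsingular subform, but against a dual vector supplied by nonsingularity of \(\beta\) on all of \(V\).
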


\begin{proof}
The two \(\kk\)-vector spaces have the same dimension by
\parref{forms-hermitian-nondegenerate}, so it suffices to show that the map
is injective. If not, there is a linear relation in \(V\) of the form
\[
v_{m+1} = a_1 v_1 + \cdots + a_m v_m
\quad\text{with}\; m \geq 1,\; v_i \in V_{\mathrm{Herm}},\;\text{and}\; a_i \in \kk.
\]
Choose such a relation with \(m\) minimal. Minimality implies that
\(v_1,\ldots,v_m\) are linearly independent in \(V\). Since \(\beta\) is
nonsingular, there exists \(w \in V\) such that \(\beta(w^{[1]},v_i) = 0\) for
\(1 \leq i \leq m-1\), and \(\beta(w^{[1]},v_m) \neq 0\); up to scaling
\(v_m\), this last value may be taken to be \(1\). Since the \(v_i\)
are Hermitian,
\[
a_m
= \beta(w^{[1]}, v_{m+1})
= \beta(v_{m+1}^{[1]},w)^q \\
= \sum\nolimits_{i = 1}^m a_i^{q^2} \beta(v_i^{[1]},w)^q
= \sum\nolimits_{i = 1}^m a_i^{q^2} \beta(w^{[1]},v_i)
= a_m^{q^2}
\]
so \(a_m \in \mathbf{F}_{q^2}\), whence
\(v_m' \coloneqq v_{m+1} - a_m v_m\) lies in \(V_{\mathrm{Herm}}\) by
\parref{forms-hermitian}. The relation
\(v_m' = a_1 v_1 + \cdots + a_{m-1} v_{m-1}\) then
contradicts the minimality of \(m\), as required.
\end{proof}

In particular, since Hermitian forms over \(\mathbf{F}_{q^2}\) always have
an orthonormal basis, this implies that all nonsingular \(q\)-bic forms over
a separably closed field have a Gram matrix given by the identity:

\begin{Corollary}\label{forms-hermitian-diagonal}
If \((V,\beta)\) is a nonsingular \(q\)-bic form over a separably closed field,
then there exists a basis \(V = \langle e_1,\ldots,e_n \rangle\) such that
\(\Gram(\beta; e_1,\ldots,e_n) = \mathbf{1}^{\oplus n}\).
\qed
\end{Corollary}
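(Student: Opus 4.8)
The plan is to reduce the statement to the classical fact that a nondegenerate Hermitian form over the quadratic extension \(\mathbf{F}_{q^2}/\mathbf{F}_q\) admits an orthonormal basis, transporting that basis back up to \(V\) via \parref{forms-hermitian-nondegenerate-span}. First I would apply \parref{forms-hermitian-nondegenerate-span} to identify \(V\) with \(V_{\mathrm{Herm}} \otimes_{\mathbf{F}_{q^2}} \kk\); in particular \(\dim_{\mathbf{F}_{q^2}} V_{\mathrm{Herm}} = n\), and by \parref{forms-hermitian} the restriction \(\beta_{\mathrm{Herm}}\) is a Hermitian form on \(V_{\mathrm{Herm}}\) relative to \(\mathbf{F}_{q^2}/\mathbf{F}_q\). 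Since \(\beta\) is nonsingular and \(V_{\mathrm{Herm}}\) spans \(V\) over \(\kk\), the form \(\beta_{\mathrm{Herm}}\) is nondegenerate: a vector in its left kernel would, after extension of scalars, lie in the kernel of \(\beta \colon V \to V^{[1],\vee}\).

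Next I would invoke the existence of an orthonormal basis \(e_1,\ldots,e_n\) of \((V_{\mathrm{Herm}}, \beta_{\mathrm{Herm}})\). This is the only external input, and it is standard: arguing by induction on \(n\), nondegeneracy produces a vector \(v\) with \(\beta_{\mathrm{Herm}}(v^{[1]},v) = c \in \mathbf{F}_q^\times\), and surjectivity of the norm map \(\mathbf{F}_{q^2}^\times \to \mathbf{F}_q^\times\), \(a \mapsto a^{q+1}\), lets one rescale \(v\) so that \(c = 1\); then one passes to the nondegenerate orthogonal complement \(\langle v \rangle^{\perp}\) and repeats. (I might instead simply cite this as a well-known property of Hermitian forms over finite fields.)

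Finally, base-changing to \(\kk\), the orthonormal basis \(e_1,\ldots,e_n\) of \(V_{\mathrm{Herm}}\) becomes a \(\kk\)-basis of \(V\) by the isomorphism of \parref{forms-hermitian-nondegenerate-span}, and by construction \(\beta(e_i^{[1]}, e_j) = \beta_{\mathrm{Herm}}(e_i^{[1]}, e_j) = \delta_{ij}\), so \(\Gram(\beta; e_1,\ldots,e_n) = \mathbf{1}^{\oplus n}\). There is no real obstacle here beyond checking the nondegeneracy of \(\beta_{\mathrm{Herm}}\) and recalling the orthonormal basis fact; the content of the corollary has already been extracted in \parref{forms-hermitian-nondegenerate-span}.
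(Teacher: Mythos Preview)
Your proposal is correct and follows essentially the same route as the paper: the corollary is stated with a \qed\ immediately after \parref{forms-hermitian-nondegenerate-span}, the point being exactly that \(V \cong V_{\mathrm{Herm}} \otimes_{\mathbf{F}_{q^2}} \kk\) and that Hermitian forms over \(\mathbf{F}_{q^2}\) admit orthonormal bases. You have merely supplied the details the paper leaves implicit, including the nondegeneracy check for \(\beta_{\mathrm{Herm}}\) and the norm-map argument for the orthonormal basis.
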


When \(\kk\) is separably closed, \(V\) is spanned by its Hermitian vectors
\(V_{\mathrm{Herm}}\) by \parref{forms-hermitian-nondegenerate-span}, and so
the restriction map \(\Aut(V,\beta) \to
\Aut(V_{\mathrm{Herm}},\beta_{\mathrm{Herm}})\) from the automorphism group of
a nonsingular \(q\)-bic form to that of its associated Hermitian form is an
isomorphism. Combined with \parref{forms-hermitian-diagonal}, Galois descent
implies that isomorphism classes of nonsingular \(q\)-bic forms of dimension
\(n\) over a general field \(\kk\) are in bijection with forms of the standard
Hermitian form on \(\mathbf{F}_{q^2}^{\oplus n}\) over \(\kk\). This implies:

\begin{Corollary}\label{hermitian-galois}
Two nonsingular \(q\)-bic forms are isomorphic over \(\kk\) if and only
if their group schemes of Hermitian vectors equipped with their
Hermitian forms are isomorphic over \(\kk\).
\qed
\end{Corollary}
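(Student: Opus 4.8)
The plan is to deduce the equivalence from the Galois-descent bijection recalled just before the statement, between isomorphism classes of nonsingular \(q\)-bic forms of a fixed dimension over \(\kk\) and isomorphism classes of \(\kk\)-forms of the split Hermitian space, exhibiting along the way the assignment \(\mathcal{H} \colon (V,\beta) \mapsto (\mathbf{A}V_{\mathrm{Herm}}, \mathbf{A}\beta_{\mathrm{Herm}})\) as precisely the functor realizing that bijection. The forward implication is then functoriality: an isomorphism \(\varphi \colon (V,\beta) \xrightarrow{\sim} (V',\beta')\) base-changes, for each \(\kk\)-algebra \(R\), to an isomorphism of \(q\)-bic forms over \(R\), and the equations of \parref{forms-hermitian-equations} show this carries the subgroup of Hermitian elements onto that of the target while respecting the Hermitian forms; thus \(\varphi\) induces an isomorphism \(\mathcal{H}(V,\beta) \xrightarrow{\sim} \mathcal{H}(V',\beta')\) as group schemes of \(\mathbf{F}_{q^2}\)-vector spaces carrying a Hermitian form.

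For the converse, suppose \(\mathcal{H}(V,\beta) \cong \mathcal{H}(V',\beta')\) over \(\kk\). Since \(\deg_\kk \mathbf{A}V_{\mathrm{Herm}} = q^2\dim_\kk V\) by \parref{forms-hermitian-nondegenerate}, the two forms have a common dimension \(n\). Fix a separable closure \(\kk^{\mathrm{sep}}\) with Galois group \(\Gamma\). By \parref{forms-hermitian-diagonal} both forms become isomorphic over \(\kk^{\mathrm{sep}}\) to the standard form \((W,\gamma) \coloneqq (\kk^{\mathrm{sep},\oplus n}, \mathbf{1}^{\oplus n})\), whose scheme of Hermitian vectors is, by \parref{forms-hermitian-nondegenerate-span} and \parref{forms-hermitian-examples}, the constant group scheme \((\mathcal{H}_0,h_0)\) on \(\mathbf{F}_{q^2}^{\oplus n}\) with its standard Hermitian form. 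Descent of \(q\)-bic forms is legitimate here because the automorphism group scheme of \((W,\gamma)\) is finite étale — its Lie algebra \(\Hom_{\kk^{\mathrm{sep}}}(W, W^{[1],\perp})\) vanishes and it has dimension zero, by Theorem \parref{automorphisms-theorem} applied to a nonsingular form — and similarly for the constant Hermitian group schemes. Thus \((V,\beta)\) and \((V',\beta')\) are classified by classes in \(H^1(\Gamma, \Aut_{\kk^{\mathrm{sep}}}(W,\gamma))\), and \(\mathcal{H}(V,\beta)\), \(\mathcal{H}(V',\beta')\) by classes in \(H^1(\Gamma, \Aut_{\kk^{\mathrm{sep}}}(\mathcal{H}_0,h_0))\); by functoriality of \(\mathcal{H}\) the latter classes are the images of the former under the map induced by the canonical \(\Gamma\)-equivariant restriction homomorphism \(\rho \colon \Aut_{\kk^{\mathrm{sep}}}(W,\gamma) \to \Aut_{\kk^{\mathrm{sep}}}(\mathcal{H}_0,h_0)\). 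By the result recalled before the statement, \(\rho\) is an isomorphism — surjective because an automorphism of \((W_{\mathrm{Herm}},\gamma_{\mathrm{Herm}})\) extends \(\kk^{\mathrm{sep}}\)-linearly along \(W = W_{\mathrm{Herm}} \otimes_{\mathbf{F}_{q^2}} \kk^{\mathrm{sep}}\) by \parref{forms-hermitian-nondegenerate-span}, injective because this extension is unique — hence \(\rho_*\) is a bijection on \(H^1\). Therefore the classes of \((V,\beta)\) and \((V',\beta')\) agree, that is, \((V,\beta) \cong (V',\beta')\) over \(\kk\).

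The only delicate points are the verification that \(\rho\) is an isomorphism of \(\Gamma\)-groups — which is exactly \parref{forms-hermitian-nondegenerate-span}, guaranteeing that over a separably closed field a nonsingular \(q\)-bic form and its associated Hermitian form have the same automorphisms — together with the routine but necessary bookkeeping that \(\mathcal{H}\) literally implements the map of \(H^1\)-pointed-sets, so that equality of descent classes on the Hermitian side forces it on the \(q\)-bic side. Everything else is standard descent theory, using that nonsingular \(q\)-bic forms and finite-dimensional Hermitian spaces over \(\mathbf{F}_{q^2}\) both form stacks for the étale topology with finite étale automorphism group schemes.
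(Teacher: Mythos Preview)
Your argument is correct and follows exactly the route the paper intends: the corollary is stated with a bare \(\qed\) because the preceding paragraph already records that over \(\kk^{\mathrm{sep}}\) the restriction map \(\Aut(V,\beta) \to \Aut(V_{\mathrm{Herm}},\beta_{\mathrm{Herm}})\) is an isomorphism and that Galois descent then puts nonsingular \(q\)-bic forms in bijection with \(\kk\)-forms of the standard Hermitian space; you have simply unpacked this into an explicit \(H^1\) comparison. One small remark: you need not invoke the forward reference to Theorem~\parref{automorphisms-theorem} for \'etaleness of the automorphism group scheme, since Galois descent for forms only requires the abstract automorphism group over \(\kk^{\mathrm{sep}}\), and the isomorphism \(\rho\) you use is already established in the paragraph preceding the corollary.
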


\section{Canonical filtrations}\label{section-filtrations}
Although a \(q\)-bic form \(\beta\) pairs distinct modules \(M\) and \(M^{[1]}\),
the canonical \(q\)-linear map \(M \to M^{[1]}\) makes it possible to iterate
left and right orthogonals. This gives rise to two intrinsic filtrations whose
interactions ultimately encode the structure of \(\beta\). After their
definition in \parref{forms-canonical-filtration} and
\parref{forms-canonical-filtration-second}, a symmetry relation is established
in \parref{forms-intersect-filtrations}, and a series of numerical invariants
of \(\beta\) are defined in \parref{forms-numerical-invariants} and
\parref{forms-numerical-invariants-type}.

\subsectiondash{\(\perp\)-filtration}\label{forms-canonical-filtration}
The map \(\beta \colon M \to M^{[1],\vee}\) gives rise to a sequence
\(\mathrm{P}_{\smallbullet} M\) of submodules of \(M\) as follows: set
\(\mathrm{P}_{-1}M \coloneqq \{0\}\), and for each \(i
\geq 0\), inductively set
\[
\mathrm{P}_i M
\coloneqq \mathrm{P}_{i-1} M^{[1],\perp}
\coloneqq \ker\big(\beta \colon M \to M^{[1],\vee} \to \mathrm{P}_{i-1} M^{[1],\vee}\big).
\]
For instance, \(\mathrm{P}_0 M = M\) is the entire module and
\(\mathrm{P}_1 M = M^{[1],\perp}\) is the kernel of \(\beta\).
Induction with the inclusion-reversing property
\parref{forms-orthogonal-inclusion-reversing}\ref{forms-orthogonal-inclusion-reversing.nested}
implies that:
\begin{itemize}
\item the odd-indexed submodules form an increasing filtration
\(\mathrm{P}_{2k-1}M \subseteq \mathrm{P}_{2k+1} M\),
\item
the even-indexed submodules form a decreasing filtration
\(\mathrm{P}_{2k} M \supseteq \mathrm{P}_{2k+2}M\), and
\item
the odd-indexed submodules are totally isotropic
\(\mathrm{P}_{2k-1}M \subseteq \mathrm{P}_{2k-1} M^{[1],\perp} \eqqcolon \mathrm{P}_{2k}M\).
\end{itemize}
Therefore the sequence \(\mathrm{P}_{\smallbullet} M\) fits into two interwoven
filtrations
\[
\{0\} =
\mathrm{P}_{-1} M \subseteq
\mathrm{P}_1 M \subseteq
\mathrm{P}_3 M \subseteq
\cdots \subseteq
\mathrm{P}_- M \subseteq
\mathrm{P}_+ M \subseteq
\cdots \subseteq
\mathrm{P}_4 M \subseteq
\mathrm{P}_2 M \subseteq
\mathrm{P}_0 M =
M
\]
called the \emph{\(\perp\)-filtration} of \((M,\beta)\); here,
\(\mathrm{P}_- M\) and \(\mathrm{P}_+ M\) are the limiting submodules for
the increasing odd-, and decreasing even-filtrations, respectively.

\subsectiondash{\(\perp^{[\smallbullet]}\)-filtration}\label{forms-canonical-filtration-second}
The map \(\beta^\vee \colon M^{[1]} \to M^\vee\) and its
Frobenius twists give rise to a sequence of submodules
\(\mathrm{P}'_i M^{[i]} \subseteq M^{[i]}\) as follows:
set \(\mathrm{P}'_{-1} M^{[-1]} \coloneqq \{0\}\), and for
each \(i \geq 0\), inductively set
\[
\mathrm{P}'_i M^{[i]} \coloneqq
\mathrm{P}'_{i-1}M^{[i-1],\perp^{[i-1]}} \coloneqq
\ker\big(\beta^{[i-1],\vee} \colon
M^{[i]} \to
M^{[i-1],\vee} \to
\mathrm{P}'_{i-1} M^{[i-1],\vee}\big),
\]
where notation is as in \parref{forms-fr-twist-orthogonal}.
For instance, \(\mathrm{P}'_0 M \coloneqq \mathrm{P}'_0 M^{[0]} = M\) is
the module itself and \(\mathrm{P}_1'M^{[1]} = M^\perp\) is the kernel of
\(\beta^\vee\). For each integer \(j\), write
\[
\mathrm{P}_i' M^{[i+j]} \coloneqq
(\mathrm{P}_i' M^{[i]})^{[j]}
\]
for the submodule of \(M^{[i+j]}\) which for \(j \geq 0\)
is the \(j\)-th Frobenius twist of \(\mathrm{P}_i' M^{[i]}\), and for
\(j \leq 0\) is the \(j\)-th Frobenius descent, if it exists; in the latter
case, say that the \(i\)-th piece of the
\(\perp^{[\smallbullet]}\)-filtration \emph{descends to \(M^{[i+j]}\) over \(R\)}.
As with the \(\perp\)-filtration, the inclusion-reversing property
\parref{forms-orthogonal-inclusion-reversing}\ref{forms-orthogonal-inclusion-reversing.nested}
inductively implies that the modules fit together to
yield interwoven filtrations, so that for each
\(i \geq 0\), there is a filtration of \(M^{[i]}\) of the form:
\[
\{0\} =
\mathrm{P}_{-1}' M^{[i]} \subseteq
\mathrm{P}_1'M^{[i]} \subseteq
\mathrm{P}_3'M^{[i]} \subseteq
\cdots
\subseteq
\mathrm{P}_i' M^{[i]} \subseteq
\cdots \subseteq
\mathrm{P}_2' M^{[i]} \subseteq
\mathrm{P}_0' M^{[i]} = M^{[i]}.
\]

Since the two filtrations are inductively defined via kernels of \(\beta\)
and \(\beta^\vee\), it is straightforward to check that their formation is
compatible with orthogonal decompositions:

\begin{Lemma}\label{forms-filtrations-sums}
If \((M,\beta) = (M',\beta') \perp (M'',\beta'')\) is an orthogonal decomposition,
then for every \(i\),
\[
\pushQED{\qed}
\mathrm{P}_i M =
\mathrm{P}_i M' \oplus \mathrm{P}_i M''
\quad\text{and}\quad
\mathrm{P}_i' M^{[i]} =
\mathrm{P}_i' M'^{[i]} \oplus
\mathrm{P}_i' M''^{[i]}.
\qedhere
\popQED
\]
\end{Lemma}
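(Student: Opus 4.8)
The plan is to reduce the statement to the observation that an orthogonal decomposition makes \(\beta\) ``block diagonal'' with respect to \(M = M' \oplus M''\), and then to induct on \(i\), treating both filtrations in parallel.

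First I would unwind the definition of orthogonal complement from \parref{forms-orthogonal-complements}. Saying that \(M''^{[1]}\) lies in the total orthogonal of \(M'\) unwinds, via the explicit description of the total orthogonal and the characterizing identity \(\beta^{[1]}((m')^{[1]},m^{[1]}) = \beta(m',m)^q\) of \parref{forms-fr-twist-orthogonal} together with \(R\)-bilinearity of \(\beta\), to the vanishing of the two ``off-diagonal'' restrictions \(\beta|_{M''^{[1]} \otimes_R M'}\) and \(\beta|_{M'^{[1]} \otimes_R M''}\). Hence \(\beta = \beta' \oplus \beta''\) is compatible with \(M = M' \oplus M''\) and \(M^{[1]} = M'^{[1]} \oplus M''^{[1]}\); and since forming Frobenius twists is a base change, so preserves direct sums and transforms \(\beta\) by the same formula, every twist is block diagonal too: \(\beta^{[i]} = (\beta')^{[i]} \oplus (\beta'')^{[i]}\) for all \(i\).

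Then I would induct on \(i\), with the trivial base cases \(i = -1, 0\). Suppose \(\mathrm{P}_{i-1}M = \mathrm{P}_{i-1}M' \oplus \mathrm{P}_{i-1}M''\) as a sub-direct-sum of \(M = M' \oplus M''\). Frobenius twisting preserves this decomposition, and dualizing turns it into a direct-sum decomposition of \((\mathrm{P}_{i-1}M^{[1]})^\vee\); block-diagonality of \(\beta\) then makes the composite \(M \to M^{[1],\vee} \to (\mathrm{P}_{i-1}M^{[1]})^\vee\) whose kernel is \(\mathrm{P}_i M\) respect the grading. The kernel of a grading-respecting map is the direct sum of the kernels of its graded components, and the component over \(M'\) is exactly the map defining \(\mathrm{P}_i M'\); this yields \(\mathrm{P}_i M = \mathrm{P}_i M' \oplus \mathrm{P}_i M''\). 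The argument for \(\mathrm{P}_i' M^{[i]} = \mathrm{P}_i' M'^{[i]} \oplus \mathrm{P}_i' M''^{[i]}\) is word for word the same, with \(\beta\) replaced by \(\beta^{[i-1],\vee}\) and \(M\) by the appropriate Frobenius twist, using block-diagonality of \(\beta^{[i-1]}\).

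I do not expect a genuine obstacle: the content is purely organizational, hinging on the single fact that \(\beta\) and all of its twists split as \(\beta' \oplus \beta''\). The two points warranting a little care are the unwinding of the ``second'' total-orthogonality condition into block-diagonality, where one must track the \(q\)-th power introduced by \(\beta^{[1]}\), and the decision to phrase the induction so that it uses only that Frobenius twist commutes with finite direct sums, never that it is exact, so that the conclusion holds for an arbitrary finite projective \(R\)-module \(M\).
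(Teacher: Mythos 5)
Your argument is correct and is exactly the ``straightforward check'' the paper alludes to (the lemma carries no proof beyond the remark that the filtrations are defined by iterated kernels): block-diagonality of \(\beta\) and of all its Frobenius twists, plus induction on \(i\) using the fact that the kernel of a direct sum of maps is the direct sum of the kernels. Your care in using only that Frobenius twist commutes with direct sums, not that it is exact, is exactly right for the finite projective setting.

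One caveat on the point you yourself flag. The second total-orthogonality condition literally yields \(\beta^{[1]}(m'^{[2]},m''^{[1]})=\beta(m'^{[1]},m'')^q=0\), i.e.\ only that the off-diagonal block \(\beta|_{M'^{[1]}\otimes M''}\) is killed by the \(q\)-power Frobenius of \(R\). Over a reduced ring (in particular over the field \(\kk\), where the lemma is actually applied) this forces the block to vanish and your unwinding is complete. Over a non-reduced \(R\) it does not: for \(R=\kk[\epsilon]/(\epsilon^2)\) the Gram matrix \(\left(\begin{smallmatrix}0&\epsilon\\0&0\end{smallmatrix}\right)\) satisfies the literal definition of an orthogonal decomposition \(Re_1\perp Re_2\), yet \(\mathrm{P}_1M=Re_1+\epsilon Re_2\neq M\), so the conclusion fails. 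The paper's notation \(\beta=\beta'\oplus\beta''\) makes clear that genuine block-diagonality is the intended meaning of an orthogonal decomposition, so this is best read as a wrinkle in the definition rather than a defect of your proof; but you should either assume \(R\) reduced or take block-diagonality as the hypothesis rather than deriving it.
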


\subsectiondash{Example}\label{standard-forms-filtration-example}
Consider the \(\perp\)- and \(\perp^{[\smallbullet]}\)-filtrations for the
standard forms defined in \parref{forms-standard}. Note that the filtrations
are trivial when \((M,\beta)\) is nondegenerate, since they are constructed by
taking iterated kernels of \(\beta\) and \(\beta^\vee\). So by
\parref{forms-filtrations-sums}, it remains to describe
the filtrations when \(\beta\) has a Gram matrix given by \(\mathbf{N}_n\)
for some basis \(\langle e_1,\ldots,e_n \rangle\) of \(M\). A direct
computation shows that the \(\perp\)-filtration has \(n\) steps, given for
\(0 \leq i \leq n-1\) by
\[
\mathrm{P}_i M =
\begin{dcases*}
\phantom{\Big(}\bigoplus\nolimits_{\ell = 1}^k R \cdot e_{2\ell-1}
& if \(i = 2k - 1\), and \\
\Big(\bigoplus\nolimits_{\ell = 1}^k R \cdot e_{2\ell-1}\Big) \oplus
\Big(\bigoplus\nolimits_{j = 2k+1}^n R \cdot e_j\Big)
& if \(i = 2k\).
\end{dcases*}
\]
In particular, \(\mathrm{P}_- M = \mathrm{P}_+ M\) is the span of the odd-indexed
basis elements.

The \(\perp^{[\smallbullet]}\)-filtration similarly has \(n\) steps. Moreover,
in this special case, each step of the filtration descends to \(M\), and is
given for \(0 \leq i \leq n-1\) by
\[
\mathrm{P}_i' M =
\begin{dcases*}
\phantom{\Big(}
\bigoplus\nolimits_{\ell = 1}^k R \cdot e_{n-2\ell+2}
& if \(i = 2k-1\), and \\
\Big(\bigoplus\nolimits_{\ell = 1}^k R \cdot e_{n-2\ell+2}\Big) \oplus
\Big(\bigoplus\nolimits_{j = 1}^{n-2k} R \cdot e_j\Big)
& if \(i = 2k\).
\end{dcases*}
\]

For the remainder of the Section, specialize to the situation of a
\(q\)-bic form \((V,\beta)\) on a finite-dimensional vector space over a
field \(\kk\).

\subsectiondash{Symmetry}\label{forms-dimension-symmetry}
Dimensions of various pieces of the \(\perp\)- and
\(\perp^{[\smallbullet]}\)-filtration provide a series of numerical
invariants for \((V,\beta)\); taking just the first pieces gives the familiar
\begin{align*}
\rank(V,\beta)
& \coloneqq \rank(\beta \colon V \to V^{[1],\vee})
          = \rank(\beta^\vee \colon V^{[1]} \to V^\vee),\;\text{and} \\
\corank(V,\beta)
& \coloneqq \dim_\kk V - \rank(V,\beta)
          = \dim_\kk \mathrm{P}_1 V
          = \dim_\kk \mathrm{P}'_1 V^{[1]}.
\end{align*}
In particular, the first pieces of the two filtrations have the same
dimension. As is also suggested by the examples in
\parref{standard-forms-filtration-example}, this dimensional symmetry persists
amongst higher pieces. To prove this, first observe that
restricting the \(j\)-th Frobenius twisted pairing
\(\beta^{[j]} \colon V^{[j+1]} \otimes_{\kk} V^{[j]} \to \kk\)
to \(\mathrm{P}_{i-1} V^{[j+1]}\) and \(\mathrm{P}_j' V^{[j]}\) and using
\parref{forms-orthogonal-sequence}\ref{forms-orthogonal-sequence.submodule}
twice gives an exact sequence
\[
0 \to
\mathrm{P}_i V^{[j]} \cap \mathrm{P}_j' V^{[j]} \to
\mathrm{P}_j' V^{[j]} \xrightarrow{\beta^{[j]}}
\mathrm{P}_{i-1} V^{[j+1],\vee} \to
(\mathrm{P}_{i-1}V^{[j+1]} \cap \mathrm{P}_{j+1}' V^{[j+1]})^\vee \to
0.
\]
The symmetry statement is the following:

\begin{Proposition}\label{forms-intersect-filtrations}
\(
\dim_\kk\mathrm{P}_i V^{[j]} \cap \mathrm{P}_j' V^{[j]}
=
\dim_\kk\mathrm{P}_j V^{[i]} \cap \mathrm{P}_i' V^{[i]}
\)
for all integers \(i, j \geq 0\).
\end{Proposition}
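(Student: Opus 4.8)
\textbf{Plan.}

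The plan is to turn the exact sequence displayed just above the statement into a numerical recursion, solve it explicitly, and observe that the solution is visibly symmetric under $i \leftrightarrow j$. Throughout, abbreviate
\[
d(i,j) \coloneqq \dim_\kk\bigl(\mathrm{P}_i V^{[j]} \cap \mathrm{P}_j' V^{[j]}\bigr),
\qquad
p_i \coloneqq \dim_\kk \mathrm{P}_i V,
\qquad
p_j' \coloneqq \dim_\kk \mathrm{P}_j' V^{[j]}.
\]
Since $\kk$ is a field, Frobenius twisting is exact, so these dimensions are unaffected by further twisting and the identifications $\mathrm{P}_i V^{[j]} = (\mathrm{P}_i V)^{[j]} = \mathrm{P}_i(V^{[j]})$ cause no trouble; in particular the claim is $d(i,j) = d(j,i)$.

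First I would record the edge values $d(0,N) = p_N'$ and $d(i,0) = p_i$, which are immediate from $\mathrm{P}_0 V^{[j]} = V^{[j]} \supseteq \mathrm{P}_j' V^{[j]}$ and $\mathrm{P}_0' V = V \supseteq \mathrm{P}_i V$, together with $p_0 = p_0' = \dim_\kk V$. Taking the alternating sum of dimensions along the exact sequence preceding the statement then gives, for all $i \geq 1$ and $j \geq 0$, the recursion
\[
d(i,j) + p_{i-1} = p_j' + d(i-1, j+1).
\]
Iterating this to bring the first index down to $0$, and substituting $d(0, i+j) = p_{i+j}'$, yields the closed formula
\[
d(i,j) = p_{i+j}' + \sum\nolimits_{\ell=0}^{i-1} p_{j+\ell}' - \sum\nolimits_{\ell=0}^{i-1} p_\ell .
\]

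The crux is to upgrade this to the symmetry $p_m = p_m'$ for every $m \geq 0$. Granting that, the formula becomes $d(i,j) = p_{i+j} + S_{i+j} - S_i - S_j$ where $S_k \coloneqq \sum\nolimits_{\ell=0}^{k-1} p_\ell$, which is patently symmetric in $i$ and $j$, and the proposition follows. To obtain $p_m = p_m'$, specialize the closed formula to $j = 0$ and compare it with the edge value $d(i,0) = p_i$: this gives $p_i - p_i' = \sum\nolimits_{\ell=0}^{i-1}(p_\ell' - p_\ell)$, so writing $e_\ell \coloneqq p_\ell - p_\ell'$ one has $e_i = -\sum\nolimits_{\ell=0}^{i-1} e_\ell$; since $e_0 = 0$, a trivial induction forces $e_m = 0$ for all $m$. (The step $e_1 = 0$ recovers the rank symmetry $\dim_\kk \mathrm{P}_1 V = \dim_\kk \mathrm{P}_1' V^{[1]}$ of \parref{forms-dimension-symmetry}.)

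The one point that demands attention is the very first step: one must check that the sequence preceding the statement is genuinely exact, i.e. that the relevant images are local direct summands so that \parref{forms-orthogonal-sequence}\ref{forms-orthogonal-sequence.submodule} applies — but over a field every submodule is a direct summand, so this is automatic. I expect the only genuinely non-obvious move to be the bootstrap in the previous paragraph: that playing the $j = 0$ slice of the telescoped identity against $d(i,0) = p_i$ is exactly what pins down $p_m = p_m'$ without circularity. Everything else is bookkeeping with dimensions of exact sequences.
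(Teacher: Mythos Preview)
Your argument is correct. You and the paper start from the same exact sequence and extract the same one-step recursion \(d(i,j) = p_j' - p_{i-1} + d(i-1,j+1)\), but you then part ways. The paper runs a double induction on \(i+j\) and on \(\delta = i-j\): for \(\delta \geq 2\) it applies the recursion once at \((i,j)\) and once at \((j+1,i-1)\), and matches the two using the already-established symmetry at smaller \(\delta\) together with \(p_m = p_m'\), the latter being recovered inside the induction as the special case \(d(0,m) = d(m,0)\). You instead telescope the recursion all the way down to \(i = 0\), obtain a closed formula, and then use the \(j = 0\) slice against the edge value \(d(i,0) = p_i\) to force \(p_m = p_m'\) by a separate, self-contained induction; symmetry then drops out of the formula \(d(i,j) = p_{i+j} + S_{i+j} - S_i - S_j\). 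Your route has the virtue of producing this explicit expression for \(\dim_\kk(\mathrm{P}_i V^{[j]} \cap \mathrm{P}_j' V^{[j]})\), which is not visible in the paper's inductive proof and makes later dimension counts (such as those in \parref{forms-numerical-invariants-type}) immediate; the paper's proof is marginally more structural in that it never needs to unwind the recursion completely.
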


\begin{proof}
By symmetry, it suffices to consider \(i \geq j \geq 0\). Proceed by
induction on \(i + j\). The base case is when \(i = j = i + j = 0\) so that
both sides are just \(V\) and there is nothing to prove. Now fix the quantity
\(i + j \geq 1\) and further induct on the difference
\(\delta \coloneqq i - j \geq 0\). There are
two base cases: If \(\delta = 0\), then there is nothing to prove. If
\(\delta = 1\), considering the sequence in
\parref{forms-dimension-symmetry} with \(i = j+1\) gives
\[
\dim_\kk \mathrm{P}_{j+1} V^{[j]} \cap \mathrm{P}_j' V^{[j]} -
\dim_\kk \mathrm{P}_j' V^{[j]} =
\dim_\kk \mathrm{P}_j V^{[j+1]} \cap \mathrm{P}_{j+1}' V^{[j+1]} -
\dim_\kk \mathrm{P}_j V^{[j+1]}.
\]
Since \(j < i + j = 2j+1\), induction applies to the negative terms to show
\[
\dim_\kk \mathrm{P}_j' V^{[j]} =
\dim_\kk \mathrm{P}_0 V^{[j]} \cap \mathrm{P}_j' V^{[j]} =
\dim_\kk \mathrm{P}_j V \cap \mathrm{P}_0' V =
\dim_\kk \mathrm{P}_j V
\]
whence
\(
\dim_\kk \mathrm{P}_{j+1} V^{[j]} \cap \mathrm{P}_j' V^{[j]} =
\dim_\kk \mathrm{P}_j V^{[j+1]} \cap \mathrm{P}_{j+1}' V^{[j+1]}
\).

Assume \(\delta \geq 2\). Taking dimensions in the sequence of
\parref{forms-dimension-symmetry} gives the first equation in:
\begin{align*}
\dim_\kk \mathrm{P}_i V^{[j]} \cap \mathrm{P}'_j V^{[j]}
& =
\dim_\kk \mathrm{P}_{i-1} V^{[j+1]} \cap \mathrm{P}'_{j+1} V^{[j+1]} -
\dim_\kk \mathrm{P}_{i-1} V^{[j+1]} +
\dim_\kk \mathrm{P}_j' V^{[j]} \\
& =
\dim_\kk \mathrm{P}_{j+1} V^{[i-1]} \cap \mathrm{P}'_{i-1} V^{[i-1]} -
\dim_\kk \mathrm{P}_{i-1}' V^{[i-1]} +
\dim_\kk \mathrm{P}_j V^{[i]} \\
& =
\dim_\kk \mathrm{P}_j V^{[i]} \cap \mathrm{P}_i' V^{[i]}.
\end{align*}
Since \(i-1 - (j+1) = \delta - 2\) and \(\max\{i-1,j\} < i+j\), induction gives
the equality in the middle. The final equality follows from the sequence in
\parref{forms-dimension-symmetry} upon substituting \(i \mapsto j+1\)
and \(j \mapsto i-1\).
\end{proof}

\subsectiondash{Numerical invariants}\label{forms-numerical-invariants}
A sequence of numerical invariants of \((V,\beta)\) is now obtained by taking
dimensions of graded pieces for the filtrations: For each integer \(m \geq 0\),
set \(\epsilon \coloneqq (-1)^m\), set
\[
a_m(V,\beta)
\coloneqq \dim_\kk \mathrm{P}_{m-\epsilon-1} V/\mathrm{P}_{m+\epsilon-1} V
= \dim_\kk \mathrm{P}_{m-\epsilon-1}' V^{[i]}/\mathrm{P}_{m+\epsilon-1}' V^{[m]},
\]
and set \(a(V,\beta) \coloneqq \dim_\kk \mathrm{P}_+ V/\mathrm{P}_- V\). Since
\(V\) is finite-dimensional, \(a_m(V,\beta)\) vanishes for large \(m\).
Slightly more convenient are the successive differences of these dimensions:
for each \(m \geq 0\), set
\[
b_m(V,\beta) \coloneqq a_m(V,\beta) - a_{m+1}(V,\beta)
\quad\text{so that}\quad
a_m(V,\beta) = \sum\nolimits_{i \geq m} b_i(V,\beta).
\]
In fact, these differences are also dimensions:

\begin{Lemma}\label{forms-invariants-decreasing}
\(\displaystyle
b_m(V,\beta) =
\dim_\kk\Bigg(
  \frac{\mathrm{P}_{m-\epsilon-1} V^{[1]} \cap \mathrm{P}_1' V^{[1]}}
  {\mathrm{P}_{m+\epsilon-1} V^{[1]} \cap \mathrm{P}_1' V^{[1]}}
\Bigg) =
\dim_\kk
\Bigg(
  \frac{\mathrm{P}_1 V^{[m]} \cap \mathrm{P}_{m-\epsilon-1}' V^{[m]}}
  {\mathrm{P}_1 V^{[m]} \cap \mathrm{P}_{m+\epsilon-1}' V^{[m]}}
\Bigg)
\).
\end{Lemma}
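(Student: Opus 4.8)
The strategy is to reduce both equalities to pure dimension counting, built on the four-term exact sequence of \parref{forms-dimension-symmetry} and, for the second equality, the symmetry of \parref{forms-intersect-filtrations}. Throughout I use that over the field \(\kk\) the Frobenius twist is exact, hence preserves dimensions and commutes with intersections, and that forming the \(\perp\)-filtration commutes with Frobenius twist, so \(\mathrm{P}_i V^{[j]} = (\mathrm{P}_i V)^{[j]}\). Write \(n \coloneqq \dim_\kk V\) and \(p_i \coloneqq \dim_\kk \mathrm{P}_i V\), with the conventions \(p_{-1} = 0\) and \(p_0 = n\).

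First I would specialize the exact sequence of \parref{forms-dimension-symmetry} to \(j = 0\). Since \(\mathrm{P}_0' V = V\) and \(\mathrm{P}_i V \cap V = \mathrm{P}_i V\), it becomes
\[
0 \to \mathrm{P}_i V \to V \xrightarrow{\ \beta\ } \mathrm{P}_{i-1} V^{[1],\vee} \to (\mathrm{P}_{i-1} V^{[1]} \cap \mathrm{P}_1' V^{[1]})^\vee \to 0,
\]
whence \(\dim_\kk(\mathrm{P}_{i-1} V^{[1]} \cap \mathrm{P}_1' V^{[1]}) = p_i + p_{i-1} - n\) for all \(i \geq 0\). The two indices \(m - \epsilon - 1\) and \(m + \epsilon - 1\) differ by \(2\epsilon\) and have the same parity as \(m\), so the corresponding \(\perp\)-filtration pieces are nested — increasing if \(m\) is odd, decreasing if \(m\) is even, by \parref{forms-canonical-filtration} — and intersecting with \(\mathrm{P}_1' V^{[1]}\) preserves this nesting. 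Hence the first quotient in the statement is a genuine quotient and, by the displayed dimension formula, has dimension \((p_{m-\epsilon} + p_{m-\epsilon-1}) - (p_{m+\epsilon} + p_{m+\epsilon-1})\). On the other hand, unwinding \parref{forms-numerical-invariants} gives \(a_m = \epsilon(p_{m-2} - p_m)\) and \(a_{m+1} = -\epsilon(p_{m-1} - p_{m+1})\), where \(\epsilon = (-1)^m\), so \(b_m = a_m - a_{m+1}\) equals precisely this quantity; this is cleanest to verify by treating \(m\) even and \(m\) odd as two short parallel computations. This proves the first equality.

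For the second equality I would transport the first one across \parref{forms-intersect-filtrations}. Let \(i\) be either of \(m - \epsilon - 1\), \(m + \epsilon - 1\); then \(i \in \{m - 2, m\}\), so \(m - i \geq 0\), and by the conventions of \parref{forms-canonical-filtration-second} together with exactness of Frobenius twist,
\[
\mathrm{P}_1 V^{[m]} \cap \mathrm{P}_i' V^{[m]} = \big(\mathrm{P}_1 V^{[i]} \cap \mathrm{P}_i' V^{[i]}\big)^{[m - i]},
\]
which has the same dimension as \(\mathrm{P}_1 V^{[i]} \cap \mathrm{P}_i' V^{[i]}\) and hence, by \parref{forms-intersect-filtrations}, the same dimension as \(\mathrm{P}_i V^{[1]} \cap \mathrm{P}_1' V^{[1]}\). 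Thus the two spaces defining the second quotient have term by term the same dimensions as those defining the first; the requisite nesting again comes from the parity structure of the \(\perp^{[\smallbullet]}\)-filtration in \parref{forms-canonical-filtration-second}, so the second quotient has the same dimension \(b_m\). The boundary case \(m = 1\), where one index equals \(-1\), is absorbed by the convention \(\mathrm{P}_{-1}' V^{[\smallbullet]} = \{0\}\).

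I do not expect a genuine obstacle here: the content is entirely linear algebra over \(\kk\) sitting on top of the exact sequence and symmetry statement already proved. The only points requiring care are (i) getting the directions of the filtration inclusions right, so that the claimed quotients are honest quotients, and (ii) matching the alternating sum of the \(p_i\) against \(b_m = a_m - a_{m+1}\) for both parities of \(m\) — a bookkeeping check rather than a conceptual one.
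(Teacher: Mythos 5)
Your proof is correct. For the first equality you and the paper are doing essentially the same computation: the paper applies the exact sequence \parref{forms-orthogonal-sequence}\ref{forms-orthogonal-sequence.nested} to the nested pair \(\mathrm{P}_{m+\epsilon-1}V^{[1]} \subseteq \mathrm{P}_{m-\epsilon-1}V^{[1]} \subseteq V^{[1]}\) for \(\beta^\vee\), whose middle and outer terms have dimensions \(a_m\) and \(a_{m+1}\) by definition, so the kernel term has dimension \(b_m\) at once; your route of specializing the four-term sequence of \parref{forms-dimension-symmetry} to \(j = 0\) at the two relevant values of \(i\) and subtracting is the same count unwound into the \(p_i\), and your parity bookkeeping (\(a_m = \epsilon(p_{m-2}-p_m)\), the nesting directions, the \(m=1\) boundary case) checks out. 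Where you genuinely diverge is the second equality: the paper simply reruns the identical nested-sequence argument for \(\beta^{[m]}\) and \(\mathrm{P}'_{m+\epsilon-1}V^{[m]} \subseteq \mathrm{P}'_{m-\epsilon-1}V^{[m]} \subseteq V^{[m]}\), using that the middle and right terms again have dimensions \(a_m\) and \(a_{m+1}\) by the second expression in \parref{forms-numerical-invariants}, whereas you transport the first equality across Proposition \parref{forms-intersect-filtrations} together with exactness of the Frobenius twist over a field. Your transport is legitimate --- \parref{forms-intersect-filtrations} is proved before this Lemma and independently of it, and the identification \(\mathrm{P}_1 V^{[m]} \cap \mathrm{P}'_i V^{[m]} = (\mathrm{P}_1 V^{[i]} \cap \mathrm{P}'_i V^{[i]})^{[m-i]}\) for \(i \in \{m-2,m\}\) is exactly the convention of \parref{forms-canonical-filtration-second} plus flatness of Frobenius --- but it invokes a heavier tool (a proposition proved by a double induction) where a second one-line application of the same exact sequence suffices; what it buys you is that the primed half of the statement comes for free once the unprimed half and the symmetry are in hand.
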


\begin{proof}
This follows directly from the definitions in \parref{forms-canonical-filtration}
and \parref{forms-canonical-filtration-second} of the filtrations, and taking
dimensions of the exact sequence
\parref{forms-orthogonal-sequence}\ref{forms-orthogonal-sequence.nested}
applied to the nested sequences of subspaces
\[
\mathrm{P}_{m+\epsilon-1} V^{[1]} \subseteq
\mathrm{P}_{m-\epsilon-1} V^{[1]} \subseteq V^{[1]}
\quad\text{and}\quad
\mathrm{P}_{m+\epsilon-1}' V^{[m]} \subseteq
\mathrm{P}_{m-\epsilon-1}' V^{[m]} \subseteq V^{[m]}
\]
for the maps \(\beta^\vee\) and \(\beta^{[m]}\), respectively.
\end{proof}

\subsectiondash{Type}\label{forms-numerical-invariants-type}
The sequence \((a(V,\beta); b_m(V,\beta))_{m \geq 1}\) is the \emph{type} of
\((V,\beta)\) and is the fundamental invariant: all other intrinsic numerical
invariants may be expressed in terms of the type. For instance,
\begin{align*}
\corank(V,\beta)
& = a_1(V,\beta) = \sum\nolimits_{m \geq 1} b_m(V,\beta), \;\;\text{and} \\
\dim_\kk V
& = a(V,\beta) + \sum\nolimits_{m \geq 1} a_m(V,\beta)
= a(V,\beta) + \sum\nolimits_{m \geq 1} mb_m(V,\beta).
\end{align*}
Dimensions of each intersection
\(\mathrm{P}_i V^{[j]} \cap \mathrm{P}_j' V^{[j]}\) may be expressed
similarly. Particularly useful is the following direct consequence of
\parref{forms-invariants-decreasing}: for every integer \(k \geq 1\),
\begin{align*}
\dim_\kk \mathrm{P}_{2k-1} V^{[1]} \cap \mathrm{P}_1' V^{[1]}
& = \sum\nolimits_{\ell = 1}^k b_{2\ell-1}(V,\beta),\;\;\text{and} \\
\dim_\kk \mathrm{P}_{2k} V^{[1]} \cap \mathrm{P}_1' V^{[1]}
& = \sum\nolimits_{\ell = 1}^k b_{2\ell-1}(V,\beta) + \sum\nolimits_{m \geq 2k+1} b_m(V,\beta).
\end{align*}

Examining the description of the filtrations for standard forms given in
\parref{standard-forms-filtration-example} shows that this notion of type
matches that given for standard forms in \parref{forms-standard}.
In particular, \(a(V,\beta)\) is the dimension of the nonsingular summand
of a standard form. This is true more generally in the following sense:

\begin{Lemma}\label{forms-nondegenerate-limit}
The restriction of \(\beta\) to \(\mathrm{P}_+ V\) has
\(\mathrm{P}_- V\) as its radical, and the induced \(q\)-bic form on
\(\mathrm{P}_+ V/\mathrm{P}_- V\) is nonsingular.
\end{Lemma}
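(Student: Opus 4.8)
The plan is to work at the stage where the \(\perp\)-filtration stabilizes. Since \(V\) is finite-dimensional, fix \(k\) large enough that \(\mathrm{P}_- V = \mathrm{P}_{2k-1} V = \mathrm{P}_{2k+1} V\) and \(\mathrm{P}_+ V = \mathrm{P}_{2k} V = \mathrm{P}_{2k+2} V\); write \(\mathrm{P}_-\) and \(\mathrm{P}_+\) for \(\mathrm{P}_- V\) and \(\mathrm{P}_+ V\), and let \(\beta_+\) denote the restriction of \(\beta\) to \(\mathrm{P}_+\). Spelling out the inductive definition of \parref{forms-canonical-filtration} at these indices records the two identities
\[
\mathrm{P}_+ = \ker\big(\beta \colon V \to V^{[1],\vee} \to (\mathrm{P}_-)^{[1],\vee}\big)
\quad\text{and}\quad
\mathrm{P}_- = \ker\big(\beta \colon V \to V^{[1],\vee} \to (\mathrm{P}_+)^{[1],\vee}\big);
\]
in the orthogonal notation of \parref{forms-orthogonal-inclusion-reversing}, for \(\beta\) viewed as a pairing between \(V^{[1]}\) and \(V\), these read \(\mathrm{P}_+ = (\mathrm{P}_-)^{[1],\perp}\) and \(\mathrm{P}_- = (\mathrm{P}_+)^{[1],\perp}\); equivalently, \(\mathrm{P}_+ = \{v \in V \mid \beta(w^{[1]}, v) = 0 \text{ for all } w \in \mathrm{P}_-\}\) and symmetrically for \(\mathrm{P}_-\). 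Together with the inclusion \(\mathrm{P}_- \subseteq \mathrm{P}_+\) coming from the isotropy of the odd pieces, these identities are all the argument uses.

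First I would check that \(\beta_+\) descends to a \(q\)-bic form \(\bar\beta\) on \(W \coloneqq \mathrm{P}_+ / \mathrm{P}_-\). This needs two containments. The right kernel \(\ker(\beta_+ \colon \mathrm{P}_+ \to (\mathrm{P}_+)^{[1],\vee})\) equals \(\mathrm{P}_+ \cap (\mathrm{P}_+)^{[1],\perp} = \mathrm{P}_+ \cap \mathrm{P}_- = \mathrm{P}_-\) by the second identity, so it contains \(\mathrm{P}_-\); and \((\mathrm{P}_-)^{[1]}\) lies in the left kernel \(\ker(\beta_+^\vee \colon (\mathrm{P}_+)^{[1]} \to (\mathrm{P}_+)^\vee)\) since \(\beta(v^{[1]}, w) = 0\) for \(v \in \mathrm{P}_-\) and \(w \in \mathrm{P}_+\), which is the first identity extended by \(\kk\)-linearity in the first slot. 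For nonsingularity of \(\bar\beta\): since \(W\) is finite-dimensional over \(\kk\), it is enough to show \(\bar\beta\) is nondegenerate, and this is immediate — if \(\bar\beta(\bar w) = 0\), lift \(\bar w\) to \(w \in \mathrm{P}_+\); as the twist \((\mathrm{P}_+)^{[1]} \to W^{[1]}\) is surjective, this forces \(\beta(u^{[1]}, w) = 0\) for every \(u \in \mathrm{P}_+\), hence \(w \in (\mathrm{P}_+)^{[1],\perp} = \mathrm{P}_-\) and \(\bar w = 0\). This proves the second assertion; in particular the left kernel of \(\bar\beta\) is zero.

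For the first assertion, I would compute \(\operatorname{rad}(\beta_+)\) as defined in \parref{forms-orthogonal-complements}. The containment \((\mathrm{P}_-)^{[1]} \subseteq \operatorname{rad}(\beta_+)\) follows from the two identities: for \(v \in \mathrm{P}_-\) and \(w \in \mathrm{P}_+\), the first gives \(\beta(v^{[1]}, w) = 0\), while \(\beta^{[1]}(w^{[2]}, v^{[1]}) = \beta(w^{[1]}, v)^q = 0\) by the second, and \(\kk\)-linearity in \(v^{[1]}\) extends both equalities to all of \((\mathrm{P}_-)^{[1]}\). Conversely, any \(w' \in \operatorname{rad}(\beta_+)\) satisfies in particular \(\beta_+(w', w) = 0\) for all \(w \in \mathrm{P}_+\), so its image in \(W^{[1]}\) lies in the left kernel of \(\bar\beta\), which is zero; therefore \(w'\) lies in the kernel of \((\mathrm{P}_+)^{[1]} \to W^{[1]}\), which is \((\mathrm{P}_-)^{[1]}\) since a short exact sequence of finite-dimensional \(\kk\)-vector spaces remains exact after Frobenius twist.

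The whole argument is formal once the two identities above are in hand, so the only point requiring care is extracting them correctly from the inductive construction at the stabilized indices and keeping straight the two flavours of orthogonal — the one valued in \(V\) and the one valued in \(V^{[1]}\). One could alternatively route the descent and nondegeneracy through the reflexivity statement \parref{forms-orthogonal-reflexive}, but the direct computation seems cleanest over a field.
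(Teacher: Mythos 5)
Your proof is correct and follows essentially the same route as the paper's: both rest on the stabilization identities \(\mathrm{P}_{\pm}V = \mathrm{P}_{\mp}V^{[1],\perp}\), compute the right kernel of the restriction as \(\mathrm{P}_+V \cap \mathrm{P}_-V = \mathrm{P}_-V\), and pin down the left kernel by a dimension count. The only cosmetic difference is that the paper identifies the left kernel inside \(\mathrm{P}_-V^{[1]} + \mathrm{P}_1'V^{[1]}\) via reflexivity and compares the dimensions of the two kernels, whereas you pass to the quotient and use that an injective map between equidimensional spaces is an isomorphism --- the same underlying linear-algebra fact.
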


\begin{proof}
It follows from definitions that
\(\mathrm{P}_{\pm} V = \mathrm{P}_{\mp} V^{[1],\perp}\), so the
kernels of \(\beta\) restricted to \(\mathrm{P}_+ V\) are
\begin{align*}
\ker(\beta \colon \mathrm{P}_+ V \to \mathrm{P}_+ V^{[1],\vee})
& = \mathrm{P}_+ V \cap \mathrm{P}_+ V^{[1],\perp}
= \mathrm{P}_- V, \;\;\text{and} \\
\ker(\beta^\vee \colon \mathrm{P}_+ V^{[1]} \to \mathrm{P}_+ V^\vee)
& = \mathrm{P}_+ V^{[1]} \cap \mathrm{P}_+ V^\perp
= \mathrm{P}_+ V^{[1]} \cap (\mathrm{P}_- V^{[1]} + \mathrm{P}_1' V^{[1]}),
\end{align*}
where the final equality follows from
\parref{forms-orthogonal-sequence}\ref{forms-orthogonal-reflexive}.
Since the two kernels have the same dimension, the latter must simply be
\(\mathrm{P}_- V^{[1]}\). Thus \(\beta\) restricted to \(\mathrm{P}_+ V\) has
\(\mathrm{P}_- V\) as its radical, and since this is the entire kernel,
the form induced on the quotient is nonsingular.
\end{proof}

\subsectiondash{Descending the \(\perp^{[\smallbullet]}\)-filtration}\label{filtrations-nu}
Since \(V\) is finite-dimensional, the filtrations are finite, and
all pieces of the \(\perp^{[\smallbullet]}\)-filtration are defined on all
sufficiently large Frobenius twists of \(V\). Thus
\[
\nu(V,\beta) \coloneqq
\min\Set{i \in \mathbf{Z}_{\geq 0} |
\text{\(\perp^{[\smallbullet]}\)-filtration descends to}\;
(V^{[i]},\beta^{[i]})\;
\text{over \(\kk\)}
}
\]
is well-defined. This depends on \(\kk\): for instance,
\(\nu(V,\beta) = 0\) for any form over a perfect field. The following
gives an \emph{a priori} upper bound \(\nu(V,\beta) \leq \nu_0\) depending only
on the type of \(\beta\). The statement is optimal, as can be shown by considering
infinitesimals in automorphism groups as in \parref{aut-kill-infinitesimals}
and comparing with the examples \parref{auts-examples-standard},
\parref{auts-examples-1a+N2b}, and
\(\AutSch_{(\kk^{\oplus 5}, \mathbf{N}_2\oplus\mathbf{N}_3)}\).

\begin{Lemma}\label{forms-frobenius-descent-index}
Assume that \(\beta\) is degenerate and let
\(\mu \coloneqq \max\set{m \in \mathbf{Z}_{\geq 1} | b_m(V,\beta) \neq 0}\). Then the
\(\perp^{[\smallbullet]}\)-filtration canonically descends to \(V^{[\nu_0]}\)
for
\[
\nu_0 \coloneqq
\begin{dcases*}
\mu - 2 & if \(\mu > 1\) and \(b_m(V,\beta) = 0\) for all even \(m\), \\
\mu - 1 & if \(\mu\) is odd, or \(\mu\) is even and \(a(V,\beta) = 0\), and \\
\mu     & otherwise.
\end{dcases*}
\]
\end{Lemma}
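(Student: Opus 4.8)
The plan is to control, for each $k \ge 0$, the smallest index $d_k$ such that $\mathrm{P}_k' V^{[k]}$ admits a Frobenius descent to $V^{[d_k]}$; since a Frobenius descent of a subspace is unique when it exists, $\nu(V,\beta) = \max_{k\ge 0} d_k$, so it suffices to bound this maximum by $\nu_0$. The basic tool is that $\kk$, being a field, is regular, so $\Fr\colon\kk\to\kk$ is flat by \cite{Kunz}; hence forming kernels commutes with Frobenius twists, as in \parref{forms-fr-twist-orthogonal}. Combined with $\beta^{[k-1],\vee} = (\beta^{[i],\vee})^{[k-1-i]}$, a descent of $\mathrm{P}_{k-1}' V^{[k-1]}$ to $V^{[i-1]}$ exhibits $\mathrm{P}_k' V^{[k]} = \ker\bigl(\beta^{[k-1],\vee}\colon V^{[k]}\to \mathrm{P}_{k-1}' V^{[k-1],\vee}\bigr)$ as a Frobenius twist of a kernel defined on $V^{[i]}$. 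This yields the propagation estimate $d_k \le d_{k-1}+1$ with $d_0 = 0$, hence the crude bound that the filtration descends to $V^{[n]}$.

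Next I would exploit the interwoven structure of the $\perp^{[\smallbullet]}$-filtration to see that this estimate is essentially never sharp once the $\perp$-filtration has stabilized. On $V^{[k]}$ both $\mathrm{P}_{k-1}' V^{[k]} = (\mathrm{P}_{k-1}' V^{[k-1]})^{[1]}$ and $\mathrm{P}_{k-2}' V^{[k]} = (\mathrm{P}_{k-2}' V^{[k-2]})^{[2]}$ are nested with $\mathrm{P}_k' V^{[k]}$, while $\dim_\kk \mathrm{P}_k' V^{[k]} = \dim_\kk\mathrm{P}_k V$ by \parref{forms-intersect-filtrations}, a quantity expressible through the type by \parref{forms-numerical-invariants-type}: one computes that $\dim_\kk\mathrm{P}_m V = \dim_\kk\mathrm{P}_{m-1} V$ exactly when $a(V,\beta)=0$ and $m\ge\mu$, and that $\dim_\kk\mathrm{P}_m V = \dim_\kk\mathrm{P}_{m-2} V$ exactly when $m\ge\mu+1$. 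Whenever such a coincidence of dimensions occurs, the nesting forces $\mathrm{P}_k' V^{[k]}$ to equal $(\mathrm{P}_{k-1}' V^{[k-1]})^{[1]}$ or $(\mathrm{P}_{k-2}' V^{[k-2]})^{[2]}$, so $d_k$ does not increase; more generally the same argument gives $d_k = d_{k'}$ whenever $\mathrm{P}_k V = \mathrm{P}_{k'} V$ for some $k' < k$. Thus $d_k$ can strictly grow only at indices where the $\perp$-filtration gains a genuinely new subspace, and a count of the distinct terms of $\mathrm{P}_\bullet V$ — there are $\mu+1$ of them, or $\mu$ when $a(V,\beta)=0$ — gives $\nu(V,\beta)\le\mu$, sharpened to $\nu(V,\beta)\le\mu-1$ when $a(V,\beta)=0$.

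The last and most delicate step is to save one further unit at each end in the indicated cases. At the bottom, when $b_m(V,\beta)=0$ for all even $m$, I expect to show that $\image\bigl(\beta\colon V\to V^{[1],\vee}\bigr)$ is itself the Frobenius twist of a subspace of $V^\vee$ — this is where the absence of even blocks enters — so that $\mathrm{P}_1' V^{[1]} = (\image\beta)^\perp$ descends to $V$, i.e.\ $d_1=0$, which removes one unit after propagating. At the top, when $\mu$ is odd and $a(V,\beta)>0$, I would use \parref{forms-nondegenerate-limit}, which realizes $\mathrm{P}_- V$ as the radical of $\beta|_{\mathrm{P}_+ V}$ with nonsingular quotient, to identify the last new piece of the $\perp^{[\smallbullet]}$-filtration (sitting at step $\mu$) with a Frobenius twist of the piece at step $\mu-1$ up to a summand on which the pertinent restricted pairing is an isomorphism and hence descends freely; this forces $d_\mu = d_{\mu-1}$ and removes the second unit. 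Feeding both improvements back into the count of the previous paragraph reproduces $\nu_0$ in all three cases.

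The principal obstacle is this last step: the bottom improvement must be extracted even though $\mathrm{P}_1' V^{[1]} = \ker\beta^\vee$ genuinely fails to descend to $V$ once $\beta$ has an even block — already for $\mathbf{N}_2$ over $\mathbf{F}_q(t)$ — so the dichotomy really does turn on the even blocks, and making this precise requires a close look at how $\beta$ meshes with the odd part of $\mathrm{P}_\bullet V$; locating exactly which piece the top improvement applies to, and checking that the relevant restricted pairing is an isomorphism modulo the radical, is the complementary technical point.
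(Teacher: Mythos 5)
Your first two paragraphs are sound and do recover part of the statement: the propagation bound \(d_k \le d_{k-1}+1\), together with the stabilization identities \(\dim_\kk\mathrm{P}_mV=\dim_\kk\mathrm{P}_{m-2}V\) for \(m\ge\mu+1\) and \(\dim_\kk\mathrm{P}_mV=\dim_\kk\mathrm{P}_{m-1}V\) for \(m\ge\mu\) when \(a=0\), correctly gives \(\nu\le\mu\) in general and \(\nu\le\mu-1\) when \(a(V,\beta)=0\). But the two remaining cases rest on claims that do not hold, and the "bottom improvement" is outright false. Take \(\kk=\mathbf{F}_{q^2}(t)\) and the \(q\)-bic form on \(V=\langle e_1,e_2,e_3\rangle\) with Gram matrix
\[
B=\begin{pmatrix} 0 & 0 & -t\\ 0 & 0 & 1\\ 1 & 0 & 0\end{pmatrix}.
\]
One checks \(\mathrm{P}_1V=\langle e_2\rangle\), \(\mathrm{P}_2V=\mathrm{P}_3V=\mathrm{P}_4V=\langle e_1,e_2\rangle\), so the type is \(\mathbf{N}_3\): all even \(b_m\) vanish and \(\mu=3>1\). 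Yet \(\mathrm{P}_1'V^{[1]}=\ker(\beta^\vee)=\langle e_1^{[1]}+te_2^{[1]}\rangle\), which descends to \(V\) only if \(t\) is a \(q\)-th power in \(\kk\); so \(d_1=1\), not \(0\), and \(\operatorname{im}(\beta\colon V\to V^{[1],\vee})\) is not a Frobenius twist. (This is consistent with the Lemma, which for \(\mathbf{N}_3\) only claims descent to \(V^{[1]}\), where \(\mathrm{P}_1'\) already lives.) The savings in the \(\mu-2\) case therefore cannot come from the bottom of the filtration. The "top improvement" for \(\mu\) odd with \(a>0\) is the right target (\(d_\mu\le\mu-1\)), but your proposed mechanism — a summand on which the restricted pairing is an isomorphism "descends freely" — is not an argument: nonsingularity of a pairing on a subspace says nothing about that subspace being a Frobenius twist, as the example above already illustrates for the line \(\langle e_1^{[1]}+te_2^{[1]}\rangle\).

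The mechanism the paper actually uses, and which your sketch is missing, is intersection with \(\mathrm{P}_1V\): since \(\mathrm{P}_1V\) is defined on \(V\) itself, the subspace \(\mathrm{P}_1V^{[\mu]}\cap\mathrm{P}_{\mu-1}'V^{[\mu]}\) automatically descends to level \(\mu-1\). When \(\mu\) is odd, the identity \(a_\mu=b_\mu\) combined with \parref{forms-invariants-decreasing} shows that the injection of \(\mathrm{P}_1V^{[\mu]}\cap\mathrm{P}_\mu'V^{[\mu]}/\mathrm{P}_1V^{[\mu]}\cap\mathrm{P}_{\mu-2}'V^{[\mu]}\) into \(\mathrm{P}_\mu'V^{[\mu]}/\mathrm{P}_{\mu-2}'V^{[\mu]}\) is an isomorphism, whence
\(\mathrm{P}_\mu'V^{[\mu]}=\mathrm{P}_{\mu-2}'V^{[\mu]}+(\mathrm{P}_1V^{[\mu]}\cap\mathrm{P}_{\mu-1}'V^{[\mu]})\),
a sum of spaces defined at level \(\mu-1\); this gives \(d_\mu\le\mu-1\) for all \(a\). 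When moreover all even \(b_m\) vanish, the formulae of \parref{forms-numerical-invariants-type} show the intersection is all of \(\mathrm{P}_1V^{[\mu]}\), so \(\mathrm{P}_\mu'\) descends to level \(\mu-2\) as \(\mathrm{P}_{\mu-2}'V^{[\mu-2]}+\mathrm{P}_1V^{[\mu-2]}\); the remaining piece \(\mathrm{P}_{\mu-1}'\) is then recovered at level \(\mu-3\) as the orthogonal of the descended \(\mathrm{P}_\mu'\), using reflexivity \parref{forms-orthogonal-sequence}\ref{forms-orthogonal-reflexive}. In short, the extra savings happen at the top of the filtration, not the bottom, and are driven by \(\mathrm{P}_1V\), not by \(\mathrm{P}_1'V^{[1]}\). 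As written, your proof establishes only the third case and the \(a=0\) part of the second.
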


\begin{proof}
Since \(\mu\) is the length of the \(\perp^{[\smallbullet]}\)-filtration, the
last case is clear. When \(\mu\) is even and \(a(V,\beta) = 0\), then
\(\mathrm{P}_\mu' V^{[\mu]} = \mathrm{P}_{\mu-1}' V^{[\mu]}\), so the last step of the
filtration canonically descends to \(V^{[\mu-1]}\). It remains to consider the
situation when \(\mu\) is odd. Since \(b_m(V,\beta) = 0\) for all \(m > \mu\) the
formulae of \parref{forms-numerical-invariants} imply that
\(a_\mu(V,\beta) = b_\mu(V,\beta)\),  so that by comparing dimensions via
\parref{forms-invariants-decreasing}, the natural injection
\[
\mathrm{P}_1 V^{[\mu]} \cap \mathrm{P}_\mu' V^{[\mu]}/
\mathrm{P}_1 V^{[\mu]} \cap \mathrm{P}_{\mu-2}' V^{[\mu]}
\hookrightarrow
\mathrm{P}_\mu' V^{[\mu]}/
\mathrm{P}_{\mu-2}' V^{[\mu]}
\]
is an isomorphism. This implies that \(\mathrm{P}_\mu' V^{[\mu]}\) is spanned by
\(\mathrm{P}_1 V^{[\mu]} \cap \mathrm{P}_\mu' V^{[\mu]}\) and
\(\mathrm{P}_{\mu-2}' V^{[\mu-2]}\). By the formulae in
\parref{forms-numerical-invariants-type}, the intersection coincides with
\(\mathrm{P}_1 V^{[\mu]} \cap \mathrm{P}_{\mu-1}' V^{[\mu]}\). Therefore
\[
\mathrm{P}_\mu' V^{[\mu-1]} =
\mathrm{P}_{\mu-2}' V^{[\mu-1]} +
\mathrm{P}_1 V^{[\mu-1]} \cap \mathrm{P}_{\mu-1}' V^{[\mu-1]}
\]
is a canonical descent of \(\mathrm{P}_\mu' V^{[\mu]}\) to \(V^{[\mu-1]}\).
Furthermore, when \(\mu > 1\) and \(b_m(V,\beta) = 0\) for all even \(m\), the
intersection is all of \(\mathrm{P}_1 V^{[\mu-1]}\) and so the sum
descends further to \(V^{[\mu-2]}\). By
\parref{forms-orthogonal-sequence}\ref{forms-orthogonal-reflexive},
\[
\mathrm{P}_{\mu-1}' V^{[\mu-3]} \coloneqq
\mathrm{P}_\mu' V^{[\mu-2],\perp^{[\mu-3]}} \coloneqq
\ker\big(\beta^{[\mu-3]} \colon V^{[\mu-3]} \to \mathrm{P}_\mu' V^{[\mu-2],\vee}\big)
\]
and so the entire \(\perp^{[\smallbullet]}\)-filtration admits a descent
to \(V^{[\mu-2]}\) in this case.
\end{proof}

\section{Classification}\label{section-classification}
The object of this Section is to prove the following Classification Theorem,
which says that, after passing to a suitable Frobenius twist upon which its
\(\perp^{[\smallbullet]}\)-filtration is defined as in \parref{filtrations-nu},
a \(q\)-bic form over a field is essentially classified by its type, as defined
in \parref{forms-numerical-invariants-type}:

\begin{Theorem}\label{forms-classification-theorem}
Let \((V,\beta)\) be a \(q\)-bic form over a field \(\kk\) of
type \((a;b_m)_{m \geq 1}\), and let \(\nu \coloneqq \nu(V,\beta)\).
Then there exists an orthogonal decomposition
\[
\beta^{[\nu]} = \beta_0 \oplus \Big(\bigoplus\nolimits_{m \geq 1} \beta_m\Big)
\]
such that \(\beta_0\) is nonsingular of dimension \(a\), and \(\beta_m\)
has a Gram matrix given by \(\mathbf{N}_m^{\oplus b_m}\) for each \(m \geq 1\).
\end{Theorem}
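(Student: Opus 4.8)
\emph{Reduction to \(\nu = 0\).}
The plan is to reduce to the case \(\nu = 0\) and then to peel off Jordan blocks one at a time by induction on \(\dim_\kk V\). For the reduction: a field is regular, so the \(q\)-power Frobenius of \(\kk\) is flat by \cite{Kunz}, whence by \parref{forms-fr-twist-orthogonal} the \(\perp\)- and \(\perp^{[\smallbullet]}\)-filtrations commute with Frobenius twisting; in particular \(\dim_\kk\mathrm{P}_iV^{[j]} = \dim_\kk\mathrm{P}_iV\) and similarly for the \(\mathrm{P}_i'\), so the type of \((V^{[\nu]},\beta^{[\nu]})\) equals that of \((V,\beta)\), while \(\nu(V^{[\nu]},\beta^{[\nu]}) = 0\) by construction of \(\nu\). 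Replacing \((V,\beta)\) by \((V^{[\nu]},\beta^{[\nu]})\), it therefore suffices to show: a \(q\)-bic form \((V,\beta)\) whose \(\perp^{[\smallbullet]}\)-filtration descends to \(V\) admits an orthogonal decomposition \(\beta = \beta_0 \oplus \big(\bigoplus_{m\geq1}\beta_m\big)\) with \(\beta_0\) nonsingular of dimension \(a\) and \(\beta_m\) of Gram matrix \(\mathbf{N}_m^{\oplus b_m}\).

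\emph{The induction.}
Induct on \(\dim_\kk V\). If \(\beta\) is nonsingular then \(\corank(V,\beta) = 0\), so all \(b_m\) vanish, the type is \((\dim_\kk V;0,0,\dots)\), and one sets \(\beta_0 \coloneqq \beta\). If \(\beta\) is degenerate, put \(\mu \coloneqq \max\Set{m\geq1 | b_m \neq 0}\); the essential step, discussed next, is to produce an orthogonal decomposition \(\beta = \gamma \perp (\kk^{\oplus\mu},\mathbf{N}_\mu)\), with underlying splitting \(V = V_\gamma \oplus W\). Granting it: by \parref{forms-filtrations-sums} the \(\perp\)- and \(\perp^{[\smallbullet]}\)-filtrations of \(\gamma\) and of \((\kk^{\oplus\mu},\mathbf{N}_\mu)\) add up to those of \(\beta\), so \parref{forms-numerical-invariants} shows the type of \(\gamma\) is that of \(\beta\) with \(b_\mu\) decremented; and the \(i\)-th piece of the \(\perp^{[\smallbullet]}\)-filtration of \(\gamma\) equals \(\mathrm{P}_i'V^{[i]}\cap V_\gamma^{[i]}\), which descends to \(V_\gamma\) since \(\mathrm{P}_i'V^{[i]}\) and \(V_\gamma^{[i]}\) descend and Frobenius is flat, so \(\nu(\gamma) = 0\). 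As \(\dim_\kk V_\gamma < \dim_\kk V\), the induction hypothesis applies to \(\gamma\); reassembling gives the decomposition, and iterating until the form is nonsingular produces \(\beta_0\), necessarily of dimension \(a\) by the identity \(\dim_\kk V = a + \sum_{m\geq1}mb_m\) of \parref{forms-numerical-invariants-type}.

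\emph{Splitting off a block --- the main obstacle.}
One must find \(e_1,\dots,e_\mu\in V\) with \(\beta(e_i^{[1]},e_j) = \delta_{j,i+1}\) for all \(i,j\) whose span \(W\) has a linear complement \(W'\) with \(\beta(W^{[1]},W') = \beta(W'^{[1]},W) = 0\). For \(\mu = 1\) this is immediate: \(\dim_\kk(\mathrm{P}_1V\cap\mathrm{P}_1'V) = b_1 \neq 0\) by \parref{forms-invariants-decreasing}, and any nonzero \(e_1\) in this intersection satisfies \(\beta(e_1^{[1]},x) = \beta(x^{[1]},e_1) = 0\) for all \(x\in V\), i.e.\ \(e_1^{[1]}\in\operatorname{rad}(\beta)\), so \emph{every} linear complement of \(\langle e_1\rangle\) is orthogonal and \((\langle e_1\rangle,\beta|_{\langle e_1\rangle}) = (\kk,\mathbf{N}_1)\). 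For \(\mu\geq2\) one grows a Jordan string: since the dimensions of \(\mathrm{P}_mV^{[1]}\cap\mathrm{P}_1'V^{[1]}\) computed in \parref{forms-numerical-invariants-type} jump exactly at the block sizes, one can locate a vector lying on a string of length exactly \(\mu\) and then solve for its remaining members successively, the equations \(\beta(e_i^{[1]},e_j) = \delta_{j,i+1}\) staying solvable thanks to the symmetry \parref{forms-intersect-filtrations} and the nonsingularity statement in \parref{forms-nondegenerate-limit}. The delicate point --- and the main obstacle --- is to arrange that this string closes off to an \emph{orthogonal} direct summand and not merely a sub-form; this is exactly where one uses that the \(\perp^{[\smallbullet]}\)-filtration descends to \(V\), and it is what makes the argument genuinely more subtle than the classical Jordan theory for nilpotent endomorphisms --- the obstruction being real, since when \(\nu > 0\) no such decomposition of \(\beta\) itself exists, as witnessed by the automorphism group scheme computations of \S\parref{section-automorphisms}.
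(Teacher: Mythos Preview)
Your reduction to \(\nu = 0\) and the inductive framework are sound and match the paper's strategy: the paper also passes to \(V^{[\nu]}\), then peels off nilpotent summands and iterates via \parref{forms-filtrations-sums}. The only cosmetic difference is that the paper removes all \(b_m\) copies of \(\mathbf{N}_m\) at once for each \(m\) (Proposition~\parref{classification-peel}) rather than one \(\mathbf{N}_\mu\) at a time with \(\mu\) maximal. Your \(\mu = 1\) case and your verification that \(\nu(\gamma) = 0\) persists for the complement are both correct.

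The gap is your third paragraph for \(\mu \geq 2\): you name the obstacle but do not resolve it. The references to \parref{forms-intersect-filtrations} and \parref{forms-nondegenerate-limit} do not by themselves produce vectors \(e_1,\dots,e_\mu\) with \(\beta(e_i^{[1]},e_j) = \delta_{j,i+1}\), nor do they show that \(\langle e_1,\dots,e_\mu\rangle\) admits an orthogonal complement that descends to \(V\). The paper's resolution occupies most of \S\parref{section-classification} and uses three ingredients absent from your sketch: the seesaw relation \parref{classification-propagate-subspace}, which propagates a choice of \(V_1' \subseteq \mathrm{P}_1 V \cap \mathrm{P}_{m-\epsilon-1}' V\) to subspaces \(V_2',\dots,V_m'\) with matching images under \(\beta\) and \(\beta^{[1],\vee}\); the semi-linear complement lemma \parref{classification-q-linear-complement}, which is precisely where the descent hypothesis on \(\ker\beta^\vee\) enters, to choose a complement \(V_1''\) whose image under \(\beta^\vee\) is disjoint from that of \(V_1'^{[1]}\); and the recognition principle \parref{classification-basis}, which converts the conditions on the \(V_i'\) into the statement that \(\beta\rvert_{\bigoplus V_i'}\) has Gram matrix \(\mathbf{N}_m^{\oplus b}\). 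The orthogonal complement is then obtained by computing the total orthogonal of \(\bigoplus V_i'\) explicitly and checking, via the choice of \(V_2'\), that it equals \(V_1''^{[1]}\) intersected with further \(\perp^{[1]}\)-orthogonals, hence descends. I do not see a way to bypass this machinery using only the numerical facts you cite.
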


Combined with the classification of nonsingular forms in
\parref{forms-hermitian-diagonal} and the remarks on \(\nu\) from
\parref{filtrations-nu}, this gives a normal form for \(q\)-bic forms over an
algebraically closed field:

\begin{Corollary}\label{forms-classification-theorem-kbar}
If \(\kk\) is algebraically closed, then there exists a basis
\(V = \langle e_1,\ldots,e_n \rangle\) such that
\[
\pushQED{\qed}
\Gram(\beta;e_1,\ldots,e_n) =
\mathbf{1}^{\oplus a} \oplus
\Big(\bigoplus\nolimits_{m \geq 1} \mathbf{N}_m^{\oplus b_m}\Big).
\qedhere
\popQED
\]
\end{Corollary}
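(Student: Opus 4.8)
The statement follows by combining the Classification Theorem \parref{forms-classification-theorem} with the separably closed case \parref{forms-hermitian-diagonal}. The plan is as follows. Since \(\kk\) is algebraically closed it is in particular perfect, so the remark in \parref{filtrations-nu} gives \(\nu(V,\beta) = 0\); hence \(\beta^{[\nu]} = \beta\), and \parref{forms-classification-theorem} produces an orthogonal decomposition
\[
\beta = \beta_0 \oplus \Big(\bigoplus\nolimits_{m \geq 1} \beta_m\Big)
\]
on \(V\) itself, in which \(\beta_0\) is nonsingular of dimension \(a\) and, for each \(m \geq 1\), the form \(\beta_m\) admits a basis of its underlying summand with Gram matrix \(\mathbf{N}_m^{\oplus b_m}\). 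The sum is finite since \(V\) is finite-dimensional, so only finitely many \(b_m\) are nonzero.

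Next I would normalise the nonsingular part: as \(\kk\) is in particular separably closed, \parref{forms-hermitian-diagonal} furnishes a basis of the underlying space of \(\beta_0\) with respect to which its Gram matrix is \(\mathbf{1}^{\oplus a}\).

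Finally, concatenate this basis of \(\beta_0\) with the chosen bases of the \(\beta_m\) to obtain a basis \(V = \langle e_1,\ldots,e_n \rangle\). To conclude that \(\Gram(\beta; e_1,\ldots,e_n)\) is the asserted block diagonal sum it remains only to check that the off-diagonal blocks, recording pairings between basis vectors of distinct summands, vanish. If \(e\) lies in one summand and \(f\) in another, then orthogonality of the decomposition places \(f^{[1]}\) in the total orthogonal of the summand containing \(e\), so \(\beta(f^{[1]}, e) = 0\) on the nose and \(\beta^{[1]}(e^{[2]}, f^{[1]}) = 0\); by the defining formula for \(\beta^{[1]}\) in \parref{forms-fr-twist-orthogonal} the latter equals \(\beta(e^{[1]}, f)^q\), whence \(\beta(e^{[1]}, f) = 0\) as \(\kk\) is a field. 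Thus both off-diagonal blocks are zero and \(\Gram(\beta; e_1,\ldots,e_n) = \mathbf{1}^{\oplus a} \oplus \bigl(\bigoplus_{m \geq 1} \mathbf{N}_m^{\oplus b_m}\bigr)\), as required.

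There is essentially no obstacle here beyond what is already contained in \parref{forms-classification-theorem}: the remaining argument is a matter of assembling that theorem with the separably closed case \parref{forms-hermitian-diagonal}, the only genuine bookkeeping being the vanishing of cross terms between distinct orthogonal summands—which is in any case built into the notion of orthogonal decomposition from \parref{forms-orthogonal-complements}.
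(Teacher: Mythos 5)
Your proposal is correct and follows exactly the route the paper intends: the remark in \parref{filtrations-nu} gives \(\nu = 0\) over a perfect field, \parref{forms-classification-theorem} supplies the orthogonal decomposition on \(V\) itself, and \parref{forms-hermitian-diagonal} normalises the nonsingular summand. The verification that both off-diagonal blocks vanish, via the total orthogonal and the formula \(\beta^{[1]}(e^{[2]},f^{[1]}) = \beta(e^{[1]},f)^q\), is exactly the bookkeeping implicit in the paper's one-line proof.
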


Throughout this Section, let \((V,\beta)\) denote a \(q\)-bic form over a field
\(\kk\). The idea of proof is as follows: Let \(m \geq 1\) be an integer, and
choose a subspace of dimension \(b \coloneqq b_m(V,\beta)\) in
\[
\begin{dcases*}
\mathrm{P}_1 V^{[\nu]} \cap \mathrm{P}_{2k-1}' V^{[\nu]}
& if \(m = 2k-1\), or \\
\mathrm{P}_1 V^{[\nu]} \cap \mathrm{P}_{2k-2}' V^{[\nu]}
& if \(m = 2k\),
\end{dcases*}
\]
lifting the quotient from \parref{forms-invariants-decreasing}.
The seesaw relation between the two filtrations, as in
\parref{classification-propagate-subspace}, extends this subspace to an
\(mb\)-dimensional subspace of \(V\) on which \(\beta\) has a Gram matrix given
by \(\mathbf{N}_m^{\oplus b}\), and which admits an orthogonal complement.
Since the filtrations are compatible with orthogonal decompositions,
inductively continuing this method on the complement yields the Theorem.

\subsectiondash{Recognizing a standard form}\label{classification-standard-form}
An intrinsic formulation of the property that a \(q\)-bic form \((V,\beta)\)
has a Gram matrix given by \(\mathbf{N}_m^{\oplus b}\) is: there exists a
vector space decomposition \(V = \bigoplus_{i = 1}^m V_i\) such that, for
integers \(1 \leq i, j, \leq m\), the map
\[
\beta_{i,j} \colon
V_j \subseteq
V \xrightarrow{\beta}
V^{[1],\vee} \twoheadrightarrow
V_i^{[1],\vee}\;
\text{is}\;
\begin{dcases*}
\text{an isomorphism} & if \(j = i + 1\), and \\
\text{zero} & otherwise.
\end{dcases*}
\]
Indeed, if \(V\) has a basis with Gram matrix \(\mathbf{N}_m^{\oplus b}\),
set \(V_i\) to be the span of the vectors corresponding to the \(i\)-th column
of each \(\mathbf{N}_m\) block. Conversely, given the vector space decomposition,
begin with a basis of \(V_1\) and use the maps \(\beta_{i,i+1}\) to construct a
basis of \(V\) with the desired Gram matrix.

The following is a more flexible characterization of such forms:

\begin{Lemma}\label{classification-basis}
Let \(V\) be an \(mb\)-dimensional vector space with decomposition
\(V = \bigoplus_{i = 1}^m V_i\) such that
\begin{enumerate}
\item\label{classification-basis.kernel}
\(V_1 = \mathrm{P}_1 V\) and
\(V_m^{[1]} = \mathrm{P}_1' V^{[1]}\),
\item\label{classification-basis.perfect}
\(\beta_{1,2} \colon V_2 \subseteq V \to V^{[1],\vee} \twoheadrightarrow V_1^{[1],\vee}\)
is an isomorphism, and
\item\label{classification-basis.images}
\(
  \image(\beta \colon V_{i+1} \to V^{[1],\vee}) =
  \image(\beta^{[1],\vee} \colon V_{i-1}^{[2]} \to V^{[1],\vee})\)
for each \(1 < i < m\).
\end{enumerate}
Then \(\beta\) has a Gram matrix given by \(\mathbf{N}_m^{\oplus b}\) with
\(b \coloneqq \dim_\kk V_1\).
\end{Lemma}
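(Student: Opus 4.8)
The plan is to verify the intrinsic criterion of \parref{classification-standard-form}, but one must first repair the decomposition: hypotheses \ref{classification-basis.kernel}--\ref{classification-basis.images} do not force the given splitting \(V = \bigoplus_i V_i\) to satisfy that criterion on the nose — for instance a three-dimensional form with Gram matrix \(\left(\begin{smallmatrix} 0 & 1 & 0 \\ 0 & \lambda & 1 \\ 0 & 0 & 0 \end{smallmatrix}\right)\) and its evident splitting satisfies \ref{classification-basis.kernel}--\ref{classification-basis.images} for every \(\lambda \in \kk\). So I would first read off the coarse block structure of \(\beta\) from the hypotheses, and then remove the extra terms by rational changes of basis.

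Begin with dimensions. By \ref{classification-basis.kernel} and the flatness of Frobenius over a field (see \parref{forms-fr-twist-orthogonal}), the maps \(\beta \colon V \to V^{[1],\vee}\) and \(\beta^{[1],\vee} \colon V^{[2]} \to V^{[1],\vee}\) have kernels \(V_1\) and \(V_m^{[2]}\), hence restrict to injections on \(\bigoplus\nolimits_{i \geq 2} V_i\) and on \(\bigoplus\nolimits_{i \leq m-1} V_i^{[2]}\). Comparing dimensions in \ref{classification-basis.images} then gives \(\dim_\kk V_{i+1} = \dim_\kk V_{i-1}\) for \(1 < i < m\), while \ref{classification-basis.perfect} gives \(\dim_\kk V_2 = \dim_\kk V_1^{[1],\vee} = b\); together with \(\dim_\kk V_1 = b\) this forces \(\dim_\kk V_i = b\) for every \(i\), so \(\dim_\kk V = mb\).

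Now analyse the Gram matrix of \(\beta\) in blocks \(\beta_{i,j} \colon V_j \to V_i^{[1],\vee}\) relative to \(V = \bigoplus_i V_i\). Condition \ref{classification-basis.kernel} makes the first block-column and the last block-row vanish. Comparing, via \(\beta^{[1]}(v^{[2]},w^{[1]}) = \beta(v^{[1]},w)^q\), which coordinate subspaces \(V_j^{[1]}\) the two members of \ref{classification-basis.images} annihilate yields, for \(1 < i < m\) and all \(j\), the equivalence \(\beta_{i-1,j} = 0 \iff \beta_{j,i+1} = 0\); feeding in the vanishing first block-column and last block-row shows the whole first block-row and last block-column also vanish off the superdiagonal. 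For the superdiagonal itself, projecting \ref{classification-basis.images} onto the summand \(V_i^{[1],\vee}\) identifies \(\image(\beta_{i,i+1})\) with the image of the \((i,i-1)\)-block of \(\beta^{[1]}\), whose rank equals \(\rank(\beta_{i-1,i})\); since \(\beta_{1,2}\) is an isomorphism by \ref{classification-basis.perfect}, induction on \(i\) shows each \(\beta_{i,i+1}\) has rank \(b\) and so is an isomorphism. Hence the Gram matrix equals \(\mathbf{N}_m^{\oplus b}\) plus a term supported strictly inside the block grid, avoiding its first and last rows and columns and its superdiagonal.

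For the repair: choose a basis \(e_1^{(1)},\dots,e_1^{(b)}\) of \(V_1\) and set \(e_{i+1}^{(k)} \coloneqq \beta_{i,i+1}^{-1}\bigl((e_i^{(k),[1]})^\vee\bigr) \in V_{i+1}\) for \(i = 1,\dots,m-1\), where \((e_i^{(k),[1]})^\vee\) is the vector of \(V_i^{[1],\vee}\) dual to the basis \(\{e_i^{(\ell),[1]}\}_\ell\); arranged into \(b\) chains of length \(m\), this basis already realizes \(\mathbf{N}_m^{\oplus b}\) apart from the surviving interior values \(\beta(e_i^{(\ell),[1]},e_j^{(k)})\). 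These one clears by changes of basis of the form \(e_j^{(k)} \mapsto e_j^{(k)} + c\,e_{i+1}^{(\ell)}\), which modify the target entry \(\kk\)-linearly in \(c\) — the \(q\)-th power contribution from the Frobenius-twisted first variable being annihilated because the relevant rows are clean — so the coefficients cancelling the interior values exist in \(\kk\), requiring no inseparable extension. The step I expect to be the genuine obstacle is organizing these corrections: each one perturbs other interior entries in the same row and column, so one must find an order of cancellation — or modify the chains in a sufficiently coordinated way — that terminates without reintroducing terms into the already-normalized positions, and it is exactly here that the full force of the image coincidences \ref{classification-basis.images}, rather than just the block pattern they imply, should be brought to bear. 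Granting this, the resulting basis has Gram matrix \(\mathbf{N}_m^{\oplus b}\), as claimed. (When \(m = 1\) everything is trivial: \ref{classification-basis.kernel} alone forces \(\beta = 0\).)
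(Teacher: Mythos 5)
Your setup is sound and matches the paper's opening moves: the observation that the hypotheses do not force the given splitting to satisfy the criterion of \parref{classification-standard-form} on the nose is correct (your \(3\)-dimensional example is valid), the dimension count \(\dim_\kk V_i = b\) is right, and the deduction that each \(\beta_{i,i+1}\) is an isomorphism while the first block-row and last block-column vanish off the superdiagonal is exactly how the paper begins. But the proof has a genuine gap precisely where you flag one, and it is not a small one --- the coordinated cancellation is the bulk of the actual argument. Two things are missing. First, you stop the rank propagation too early: iterating the equivalence \(\rank\beta_{j,i+1} = \rank\beta_{i-1,j}\) from the two seeds (column \(1\) and row \(m\) zero) yields much more than clean first row and last column; it forces \(\beta_{i,j} = 0\) whenever \(j \neq i+1\) and either \(\min\{i,j\}\) is odd or \(\max\{i,j\} \equiv m \pmod 2\). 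This stronger parity pattern is what makes the remaining correction tractable: for \(m\) odd only blocks with both indices even survive, and for \(m\) even only blocks with the smaller index even and the larger odd. Without it, your list of ``interior'' entries to clear is far larger than it needs to be, and the bookkeeping you worry about is correspondingly worse.

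Second, and more seriously, the entry-by-entry elementary operations \(e_j^{(k)} \mapsto e_j^{(k)} + c\,e_{i+1}^{(\ell)}\) do not behave as claimed: such a move perturbs \(\beta(e_j^{(k),[1]}, -)\) by \(c^q\beta(e_{i+1}^{(\ell),[1]},-)\), which is not annihilated but relocated onto the \((j, i+2)\) block via the superdiagonal isomorphism --- exactly the feedback you acknowledge being unable to control. The paper avoids this entirely by never doing entry-by-entry corrections: it redefines whole even-indexed subspaces as kernels, e.g.\ \(V_{2k}' \coloneqq \mathrm{P}_1(\bigoplus_{i \geq 2k} V_i)\) and \(V_i' \coloneqq \ker(\beta\colon V_{2k+1}\oplus V_i \to V_{2k}'^{[1],\vee})\) in the odd case (inductively on the smaller even index), and \(V_{2k}' \coloneqq \ker(\beta\colon \bigoplus_{\ell \geq k} V_{2\ell} \to \bigoplus_{\ell > k} V_{2\ell-1}^{[1],\vee})\) all at once in the even case. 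Kernels of \(\kk\)-linear maps are automatically \(\kk\)-rational and the parity pattern guarantees each \(V_i'\) still projects isomorphically onto \(V_i\), so the required vanishing is verified directly on the new decomposition rather than engineered by a terminating sequence of corrections. To complete your argument you would need to supply an equivalent of this: either prove the full parity vanishing and then give a correct ordering of the eliminations, or (better) replace the elementary operations by the kernel construction.
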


\begin{proof}
The task is to adjust the given decomposition of \(V\) so that the maps
\(\beta_{i,j}\) from \parref{classification-standard-form} are isomorphisms
when \(j = i + 1\) and zero otherwise. Post-composing the maps appearing in
\ref{classification-basis.images}
with the restriction \(V^{[1],\vee} \to V_j^{[1],\vee}\) shows that:
for all \(1 < i < m\) and \(1 \leq j \leq m\),
\[
\rank(\beta_{j,i+1} \colon V_{i+1} \to V_j^{[1],\vee}) =
\rank(\beta_{i-1,j} \colon V_j \to V_{i-1}^{[1],\vee}).
\]
The maps \(\beta \colon V_{i+1} \to V^{[1],\vee}\) and
\(\beta^\vee \colon V_{i-1}^{[1]} \to V^\vee\) are injections for each
\(1 < i < m\) by assumption \ref{classification-basis.kernel}, so the rank
equations combined with \ref{classification-basis.perfect} implies that each
subspace \(V_i\) is \(b\)-dimensional and that the maps \(\beta_{i,i+1}\) are
isomorphisms for each \(1 \leq i < m\). Since \(\beta_{j,1}\) and
\(\beta_{m,j}\) are the zero maps for all \(1 \leq j \leq m\) by
\ref{classification-basis.kernel}, the rank equations furthermore imply that
\[
\beta_{i,j} = 0
\;\text{whenever \(j \neq i+1\) and}\;
\begin{dcases*}
\phantom{\max\{i,j\}}\llap{\(\min\{i,j\}\)} \equiv \rlap{\(1\)}\phantom{m} \pmod{2}, \\
\max\{i,j\} \equiv m \pmod{2}.
\end{dcases*}
\]
Adjust now the even-indexed subspaces via a Gram--Schmidt-like procedure,
in a way depending on the parity of \(m\), to arrange for the remaining
\(\beta_{i,j}\) to vanish.

\smallskip
\noindent\textbf{Case \(m\) is odd.}
It remains to arrange for \(\beta_{i,j} = 0\)
when both \(i\) and \(j\) are even. Let \(1 \leq k < m/2\) and inductively
assume that \(\beta_{i,j} = 0\) whenever \(\min\{i,j\} < 2k\). The task is to
modify the subspaces \(V_i\) with \(i\) even and \(2k \leq i \leq m\) so that
vanishing furthermore holds when \(\min\{i,j\} = 2k\).

Consider the subspace \(V' \coloneqq \bigoplus_{i = 2k}^m V_i\) and let
\(\beta'\) be the restriction of \(\beta\) thereon. For
\(2k \leq i \leq m\), set \(V_i' \coloneqq V_i\) for odd \(i\), and
\[
V_i' \coloneqq
\begin{dcases*}
\mathrm{P}_1 V'
& if \(i = 2k\), and \\
\ker(\beta \colon V_{2k+1} \oplus V_i \to V_{2k}'^{[1],\vee})
& otherwise.
\end{dcases*}
\]
Since \(\beta_{i,i+1}^\vee\) is an isomorphism and \(\beta^\vee\) vanishes on
\(V_m\) by \ref{classification-basis.kernel}, it follows that
\(\mathrm{P}_1' V'^{[1]} = V_m^{[1]}\), so
\[
\dim_\kk V_{2k}'
= \dim_\kk \mathrm{P}_1 V'
= \dim_\kk \mathrm{P}_1' V'^{[1]}
= \dim_\kk V_m^{[1]}
= b.
\]

Observe that the projections \(V_i' \to V_i\) are isomorphisms for all \(i\):
This is clear for \(i\) odd. For \(i = 2k\), this is because
both \(V_{2k}'\) and \(V_{2k}\) are \(b\)-dimensional and, since
\(\beta_{j,j+1} \colon V_{j+1} \to V_j^{[1],\vee}\) are isomorphisms,
\[
\ker(V_{2k}' \to V_{2k}) =
\mathrm{P}_1 V' \cap \Big(\bigoplus\nolimits_{i = 2k+1}^m V_i\Big) =
\ker\Big(\beta \colon
\bigoplus\nolimits_{i = 2k+1}^m V_i
\to V'^{[1],\vee}\Big)
=
\{0\}.
\]
Since \(\beta_{2k,2k+1}\) is an isomorphism and
\(\beta_{j,2k+1} = 0\) for \(2k + 1 \leq j \leq m\), this furthermore
implies that the map \(V_{2k+1} \to V_{2k}'^{[1],\vee}\) induced by \(\beta\)
is an isomorphism, whence the projection \(V_i' \to V_i\) is an isomorphism for
\(i\) even and different from \(2k\).

Therefore the subspaces \(V_i'\) yield a new direct sum decomposition
\[
V' = V_{2k}' \oplus V_{2k+1}' \oplus \cdots \oplus V_m'.
\]
It is then straightforward to check that upon replacing the \(V_i\) with the
\(V_i'\) for \(2k \leq i \leq m\), the maps \(\beta_{i,j}\), in addition to
their previous properties, vanish whenever \(i\) and \(j\) are both even and
\(\min\{i,j\} = 2k\). This completes the proof in the case \(m\) is odd.

\smallskip
\noindent\textbf{Case \(m\) is even.}
It remains to arrange vanishing of \(\beta_{i,j}\) when the smaller index is
even, and the larger index odd. This can be done all at once: for each
\(1 \leq k \leq m/2\), set \(V_{2k-1}' \coloneqq V_{2k-1}\) and
\[
V_{2k}' \coloneqq
\ker\Big(\beta \colon
V_{2k} \oplus \Big(
\bigoplus\nolimits_{\ell = k+1}^{m/2} V_{2\ell}\Big) \to
\bigoplus\nolimits_{\ell = k+1}^{m/2} V_{2\ell-1}^{[1],\vee}
\Big).
\]
The projection \(V_{2k}' \to V_{2k}\) is an isomorphism since
\(\beta\) restricts to an isomorphism between the two sums appearing in its
definition. Thus this gives a new decomposition
\(V = \bigoplus\nolimits_{i = 1}^m V_i'\), and it is
straightforward to check that the maps
\[
\beta_{i,j}' \colon
V_j' \subseteq
V \xrightarrow{\beta}
V^{[1],\vee} \twoheadrightarrow V_i'^{[1],\vee}
\]
induced by \(\beta\) on this decomposition satisfy the vanishing
conditions of \parref{classification-standard-form}: First, the subspaces
\(\bigoplus_{k = 1}^{m/2} V_{2k-1}\) and \(\bigoplus_{k = 1}^{m/2} V_{2k}\) are totally
isotropic for \(\beta\), and so the maps \(\beta_{i,j}'\) vanish whenever \(i\) and
\(j\) are of the same parity. Next, it follows by construction of the \(V_i'\)
that \(\beta_{i,j}'\) vanishes whenever \(\min\{i,j\}\) is even
and \(\max\{i,j\}\) is odd. Thus the restriction of \(\beta\) to \(V_{2k}'\)
factors as
\[
V_{2k}'
\subset V_{2k} \oplus \Big(\bigoplus\nolimits_{\ell = k+1}^{m/2} V_{2\ell}\big)
\xrightarrow{\beta} \bigoplus\nolimits_{s = 1}^k V_{2s-1}^{[1],\vee}.
\]
Since \(\beta_{2s-1, 2\ell} = 0\) for each \(1 \leq s < \ell \leq m/2\), the image of
\(V_{2k}'\) coincides with that of \(V_{2k}\), and is the space \(V_{2k-1}^{[1],\vee}\).
This gives the remaining conditions on \(\beta_{i,j}'\), completing the proof.
\end{proof}

\subsectiondash{Seesawing between filtrations}\label{classification-seesaw}
Conditions \ref{classification-basis.perfect} and
\ref{classification-basis.images} of \parref{classification-basis}
will be arranged by studying the relationship between the \(\perp\)- and
\(\perp^{[\smallbullet]}\)-filtrations under \(\beta\) and its dual.
The fundamental relationship is the following
seesaw relationship between intersections of pieces of the two filtrations:

\begin{Lemma}\label{classification-propagate-subspace}
For integers \(i,j \geq 0\), there is an equality of subspaces of
\(V^{[j]}\) given by
\[
(\mathrm{P}_{i+1} V^{[j-1]} \cap \mathrm{P}_{j-1}' V^{[j-1]})^{\perp^{[j-1]}}
=
\mathrm{P}_i V^{[j]} + \mathrm{P}_j' V^{[j]}
=
(\mathrm{P}_{i-1} V^{[j+1]} \cap \mathrm{P}_{j+1}' V^{[j+1]})^{\perp^{[j]}}.
\]
In particular, there is an equality of subspaces of \(V^{\vee,[j]}\) given by
\[
\image\big(
  \beta^{[j-1]} \colon
  \mathrm{P}_{i+1} V^{[j-1]} \cap \mathrm{P}_{j-1}' V^{[j-1]} \to
  V^{\vee,[j]}\big)
=
\image\big(
  \beta^{[j],\vee} \colon
  \mathrm{P}_{i-1} V^{[j+1]} \cap \mathrm{P}_{j+1}' V^{[j+1]} \to
  V^{\vee,[j]}\big).
\]
\end{Lemma}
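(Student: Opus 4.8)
The plan is to show that both outer terms of the first display collapse to the middle term \(\mathrm{P}_i V^{[j]} + \mathrm{P}_j'V^{[j]}\); the ``in particular'' clause then follows formally by passing to annihilators. I work throughout with the two Frobenius-twisted pairings \(\gamma \coloneqq \beta^{[j-1]} \colon V^{[j]} \otimes_\kk V^{[j-1]} \to \kk\) and \(\delta \coloneqq \beta^{[j]} \colon V^{[j+1]} \otimes_\kk V^{[j]} \to \kk\). Since \(\kk\) is a field, every subspace is a direct summand and each Frobenius twist is flat, so the conclusions of \parref{forms-orthogonal-sequence}---in particular reflexivity \ref{forms-orthogonal-reflexive} and the sum formula \ref{forms-orthogonal-cap-is-sum}---are available for submodules of either factor of \(\gamma\) or \(\delta\), and the inductive definitions of \parref{forms-canonical-filtration} and \parref{forms-canonical-filtration-second} may be twisted by Frobenius at will. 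Unwinding those definitions gives \(\mathrm{P}_{i+1}V^{[j-1]} = (\mathrm{P}_i V^{[j]})^{\perp^{[j-1]}}\) and \(\mathrm{P}_j'V^{[j]} = (\mathrm{P}_{j-1}'V^{[j-1]})^{\perp^{[j-1]}}\) as orthogonals for \(\gamma\), and \(\mathrm{P}_i V^{[j]} = (\mathrm{P}_{i-1}V^{[j+1]})^{\perp^{[j]}}\) and \(\mathrm{P}_{j+1}'V^{[j+1]} = (\mathrm{P}_j'V^{[j]})^{\perp^{[j]}}\) as orthogonals for \(\delta\); moreover the relevant kernels are \(\ker(\gamma) = \mathrm{P}_1 V^{[j-1]}\), \(\ker(\gamma^\vee) = \mathrm{P}_1'V^{[j]}\), \(\ker(\delta) = \mathrm{P}_1 V^{[j]}\), and \(\ker(\delta^\vee) = \mathrm{P}_1'V^{[j+1]}\). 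Assume \(j \geq 1\); the case \(j = 0\) is disposed of below.

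For the left-hand equality, the first identity rewrites the outer term as \(\bigl((\mathrm{P}_i V^{[j]})^{\perp^{[j-1]}} \cap \mathrm{P}_{j-1}'V^{[j-1]}\bigr)^{\perp^{[j-1]}}\). Both intersectands lie in \(V^{[j-1]}\), and \(\ker(\gamma) = \mathrm{P}_1 V^{[j-1]}\) is contained in \(\mathrm{P}_{i+1}V^{[j-1]} = (\mathrm{P}_i V^{[j]})^{\perp^{[j-1]}}\) because the first piece \(\mathrm{P}_1\) of the \(\perp\)-filtration lies inside every piece; hence \parref{forms-orthogonal-sequence}\ref{forms-orthogonal-cap-is-sum} turns the outer term into \((\mathrm{P}_i V^{[j]})^{\perp^{[j-1]},\perp^{[j-1]}} + (\mathrm{P}_{j-1}'V^{[j-1]})^{\perp^{[j-1]}}\). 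Reflexivity rewrites the first summand as \(\ker(\gamma^\vee) + \mathrm{P}_i V^{[j]} = \mathrm{P}_1'V^{[j]} + \mathrm{P}_i V^{[j]}\), and the second summand equals \(\mathrm{P}_j'V^{[j]}\) by the second identity. Since \(\mathrm{P}_1'V^{[j]} \subseteq \mathrm{P}_j'V^{[j]}\)---the analogous containment inside the \(\perp^{[\smallbullet]}\)-filtration of \(V^{[j]}\)---this sum is \(\mathrm{P}_i V^{[j]} + \mathrm{P}_j'V^{[j]}\), the middle term.

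The right-hand equality is the mirror image, with \(\delta\) in place of \(\gamma\). Both intersectands of the outer term \(\bigl(\mathrm{P}_{i-1}V^{[j+1]} \cap \mathrm{P}_{j+1}'V^{[j+1]}\bigr)^{\perp^{[j]}}\) lie in \(V^{[j+1]}\), and now the relevant containment is \(\ker(\delta^\vee) = \mathrm{P}_1'V^{[j+1]} \subseteq \mathrm{P}_{j+1}'V^{[j+1]}\) (first piece of a \(\perp^{[\smallbullet]}\)-filtration inside every piece); so \parref{forms-orthogonal-sequence}\ref{forms-orthogonal-cap-is-sum} gives \((\mathrm{P}_{i-1}V^{[j+1]})^{\perp^{[j]}} + (\mathrm{P}_{j+1}'V^{[j+1]})^{\perp^{[j]}}\). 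The first summand is \(\mathrm{P}_i V^{[j]}\) by the third identity; for the second, \(\mathrm{P}_{j+1}'V^{[j+1]} = (\mathrm{P}_j'V^{[j]})^{\perp^{[j]}}\), so reflexivity gives \((\mathrm{P}_{j+1}'V^{[j+1]})^{\perp^{[j]}} = \ker(\delta) + \mathrm{P}_j'V^{[j]} = \mathrm{P}_1 V^{[j]} + \mathrm{P}_j'V^{[j]}\). As \(\mathrm{P}_1 V^{[j]} \subseteq \mathrm{P}_i V^{[j]}\), this too sums to the middle term. (When \(i = 0\) the argument goes through unchanged, since then \(\mathrm{P}_0 V^{[j]} = V^{[j]}\) absorbs every term; when \(j = 0\) the convention \(\mathrm{P}_{-1}'V^{[-1]} = \{0\}\) forces the left-hand term, an orthogonal of \(\{0\}\), to be all of \(V\), the middle is \(\mathrm{P}_i V + V = V\), and the right-hand term is the \(\perp^{[0]}\) of a subspace of \(\ker(\beta^\vee)\), again \(V\).)

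Finally, the ``in particular'' statement: over a field the annihilator operation is an inclusion-reversing bijection between subspaces of \(V^{[j]}\) and of \(V^{\vee,[j]}\), and by definition of the adjoints, the annihilator in \(V^{[j]}\) of \(\image(\beta^{[j-1]} \colon S \to V^{\vee,[j]})\) is precisely \(S^{\perp^{[j-1]}}\) for any \(S \subseteq V^{[j-1]}\), while that of \(\image(\beta^{[j],\vee} \colon T \to V^{\vee,[j]})\) is \(T^{\perp^{[j]}}\) for any \(T \subseteq V^{[j+1]}\). Taking \(S\) and \(T\) to be the two intersections appearing above, the asserted equality of images is thus equivalent to \(S^{\perp^{[j-1]}} = T^{\perp^{[j]}}\), which is the first equality just proved. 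The one part demanding care is the bookkeeping: which \(\perp\) is taken with respect to which twisted pairing and in which Frobenius twist it lands, and, when invoking \parref{forms-orthogonal-sequence}\ref{forms-orthogonal-cap-is-sum}, choosing the intersectand to serve as ``\(N_1\)'' so that the side condition ``\(M_2^\perp \subseteq N_1\)'' holds---the \(\perp\)-kernel piece \(\mathrm{P}_1 V^{[j-1]}\) in the \(\gamma\)-computation, the \(\perp^{[\smallbullet]}\)-kernel piece \(\mathrm{P}_1'V^{[j+1]}\) in the \(\delta\)-computation.
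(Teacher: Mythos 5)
Your proof is correct and follows essentially the same route as the paper's: both orthogonals of intersections are expanded via \parref{forms-orthogonal-sequence}\ref{forms-orthogonal-cap-is-sum} (with exactly the side conditions \(\mathrm{P}_1 V^{[j-1]} \subseteq \mathrm{P}_{i+1}V^{[j-1]}\) and \(\mathrm{P}_1'V^{[j+1]} \subseteq \mathrm{P}_{j+1}'V^{[j+1]}\) that the paper checks), then identified with the middle term using reflexivity, the Frobenius-twist commutation, and the inductive definitions of the filtrations, with the image statement obtained by duality. Your explicit treatment of the \(j=0\) and \(i=0\) edge cases is additional care the paper omits.
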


\begin{proof}
Since
\(V^{[j],\perp^{[j-1]}}
= \mathrm{P}_1 V^{[j-1]}
\subseteq \mathrm{P}_{i+1} V^{[j-1]}\)
and
\(
V^{[j],\perp^{[j]}}
= \mathrm{P}_1' V^{[j+1]}
\subseteq \mathrm{P}_{j+1}' V^{[j+1]}
\),
\parref{forms-orthogonal-sequence}\ref{forms-orthogonal-cap-is-sum} applies and shows that the two
orthogonals in the statement are given by the left and right sides of:
\[
\mathrm{P}_{i+1} V^{[j-1], \perp^{[j-1]}} + \mathrm{P}_{j-1}' V^{[j-1],\perp^{[j-1]}}
= \mathrm{P}_i V^{[j]} + \mathrm{P}_j' V^{[j]}
= \mathrm{P}_{i-1} V^{[j+1],\perp^{[j]}} + \mathrm{P}_{j+1}' V^{[j+1],\perp^{[j]}}.
\]
The two middle equalities follow from the commutation relationship for Frobenius
twists in \parref{forms-fr-twist-orthogonal} and reflexivity of orthogonals as in
\parref{forms-orthogonal-sequence}\ref{forms-orthogonal-reflexive}, yielding the first statement. The
second statement now follows, since the two images are dual to the quotient
of \(V^{[j]}\) by the common orthogonal.
\end{proof}

The following consequence of \parref{classification-propagate-subspace} with
\(i = 2\) is helpful in relating the recognition principle condition
\parref{classification-basis}\ref{classification-basis.perfect} with the
numerical invariants of \(\beta\) via \parref{forms-invariants-decreasing}:

\begin{Corollary}\label{classification-complementary-to-kernel}
Let \(m \geq 1\) be an integer and \(\epsilon \coloneqq (-1)^m\). Then
\(\beta^{[m-1],\vee}\) induces an exact sequence
\[
0 \to
\frac{\mathrm{P}_1 V^{[m]} \cap \mathrm{P}_{m-\epsilon-1}' V^{[m]}}
     {\mathrm{P}_1 V^{[m]} \cap \mathrm{P}_{m+\epsilon-1}' V^{[m]}} \to
\left(
  \frac{\mathrm{P}_{m+\epsilon-2}'V^{[m-1]}}
       {\mathrm{P}_{m-\epsilon-2}'V^{[m-1]}}
\right)^\vee \to
\left(
  \frac{\mathrm{P}_2 V^{[m-1]} \cap \mathrm{P}_{m+\epsilon-2}' V^{[m-1]}}
       {\mathrm{P}_2 V^{[m-1]} \cap \mathrm{P}_{m-\epsilon-2}' V^{[m-1]}}
\right)^\vee \to
0.
\]
\end{Corollary}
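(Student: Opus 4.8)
\emph{Proof proposal.} The plan is to realize the displayed sequence as the one attached to a pairing between graded pieces of the two canonical filtrations, obtained by restricting the Frobenius twist \(\beta^{[m-1]}\), in the same spirit as the exact sequence displayed in \parref{forms-dimension-symmetry}. Throughout, abbreviate \(p \coloneqq m-\epsilon-1\) and \(q \coloneqq m+\epsilon-1\), so that \(\{p,q\} = \{m-2,m\}\); since \(p,q\) (respectively \(p-1,q-1\)) have the same parity and differ by \(2\), and since the odd- and even-indexed pieces of the \(\perp^{[\smallbullet]}\)-filtration run in opposite directions, one has \(\mathrm{P}_q'V^{[m]} \subseteq \mathrm{P}_p'V^{[m]}\) and \(\mathrm{P}_{p-1}'V^{[m-1]} \subseteq \mathrm{P}_{q-1}'V^{[m-1]}\). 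With this notation \parref{forms-invariants-decreasing} identifies the leftmost term of the asserted sequence with \(\mathrm{P}_1 V^{[m]}\cap\mathrm{P}_p'V^{[m]}/\mathrm{P}_1 V^{[m]}\cap\mathrm{P}_q'V^{[m]}\).

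First I would record two orthogonality identities, as subspaces of \(V^{[m]}\): from the inductive definition of the \(\perp^{[\smallbullet]}\)-filtration in \parref{forms-canonical-filtration-second} together with the twist-compatibility of \parref{forms-fr-twist-orthogonal} (needed when the relevant index lies below \(m-1\)),
\[
(\mathrm{P}_{p-1}'V^{[m-1]})^{\perp^{[m-1]}} = \mathrm{P}_p'V^{[m]}, \qquad (\mathrm{P}_{q-1}'V^{[m-1]})^{\perp^{[m-1]}} = \mathrm{P}_q'V^{[m]};
\]
and from \parref{classification-propagate-subspace} with \(i = 2\), applied at the twist level at which the subscript of \(\mathrm{P}_p'\) is canonical (directly when \(p = m\), after a Frobenius descent and retwist via \parref{forms-fr-twist-orthogonal} when \(p = m-2\)),
\[
(\mathrm{P}_1 V^{[m]}\cap\mathrm{P}_p'V^{[m]})^{\perp^{[m-1]}} = \mathrm{P}_2 V^{[m-1]} + \mathrm{P}_{p-1}'V^{[m-1]}.
\]
Then I would consider the pairing \(A \otimes_\kk B \to \kk\) induced by \(\beta^{[m-1]}\), where \(A \coloneqq \mathrm{P}_1 V^{[m]}\cap\mathrm{P}_p'V^{[m]}\) and \(B \coloneqq \mathrm{P}_{q-1}'V^{[m-1]}/\mathrm{P}_{p-1}'V^{[m-1]}\); it is well defined because \(A \subseteq \mathrm{P}_p'V^{[m]}\), the orthogonal of \(\mathrm{P}_{p-1}'V^{[m-1]}\). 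Its left kernel \(A_0\) is \(A \cap (\mathrm{P}_{q-1}'V^{[m-1]})^{\perp^{[m-1]}} = \mathrm{P}_1 V^{[m]}\cap\mathrm{P}_q'V^{[m]}\), using the first identity and \(\mathrm{P}_q' \subseteq \mathrm{P}_p'\); its right kernel \(B_0\) is the image in \(B\) of \(\mathrm{P}_{q-1}'V^{[m-1]}\cap(\mathrm{P}_2 V^{[m-1]} + \mathrm{P}_{p-1}'V^{[m-1]})\), which by \parref{forms-orthogonal-intersection-sum} is \(\mathrm{P}_2 V^{[m-1]}\cap\mathrm{P}_{q-1}'V^{[m-1]}/\mathrm{P}_2 V^{[m-1]}\cap\mathrm{P}_{p-1}'V^{[m-1]}\). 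A pairing of finite-dimensional \(\kk\)-vector spaces being perfect modulo its two kernels, one obtains an exact sequence \(0 \to A/A_0 \to B^\vee \to B_0^\vee \to 0\) whose first map is induced by the adjoint \(\beta^{[m-1],\vee}\); substituting the values of \(A\), \(A_0\), \(B\), \(B_0\) recovers the displayed sequence.

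The routine but error-prone part is the second identity above: because \(m-\epsilon-1\) equals \(m\) when \(m\) is odd but \(m-2\) when \(m\) is even, the index carrying the filtration sits at different Frobenius twist levels in the two parities, so one must either invoke \parref{classification-propagate-subspace} at the matching level or first descend along Frobenius and retwist using the commutation relations of \parref{forms-fr-twist-orthogonal}. Keeping those twist levels straight, together with checking the boundary conventions (so that e.g. \(\mathrm{P}_{-1}' = 0\) makes the statement vacuous for small \(m\)), is where the bookkeeping concentrates; the rest is formal manipulation of orthogonals and the second isomorphism theorem.
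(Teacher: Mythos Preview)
Your argument is correct and rests on the same key input as the paper's---namely \parref{classification-propagate-subspace} with \(i=2\)---but the bookkeeping is organized differently. You construct a single pairing \(A \otimes B \to \kk\) at the fixed Frobenius level \([m-1]\) and compute both kernels directly, then read off the exact sequence from the resulting perfect pairing on the quotients. The paper instead writes down, for each \(j\), the short exact sequence
\[
0 \to \mathrm{P}_1 V^{[j+1]} \cap \mathrm{P}_1' V^{[j+1]} \to \mathrm{P}_1 V^{[j+1]} \cap \mathrm{P}_{j+1}' V^{[j+1]} \to \big(V^{[j]}/(\mathrm{P}_2 V^{[j]} + \mathrm{P}_j' V^{[j]})\big)^\vee \to 0
\]
coming from \parref{classification-propagate-subspace}, compares the two instances \(j = m-1\) and \(j = m-2\epsilon-1\) to obtain an isomorphism between the leftmost term and the dual of a quotient of sums, and then converts sums to intersections via the second isomorphism theorem. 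Your route has the virtue of staying at a single twist level throughout, which makes the Frobenius bookkeeping (which you correctly flag as the delicate point) more transparent; the paper's route has the virtue of exhibiting the first term as a genuine subquotient of the tower \(\mathrm{P}_1 \cap \mathrm{P}_{j+1}'\), which is closer to how the sequence is used in \parref{classification-peel}. Either way the substance is the same.
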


\begin{proof}
Applying \parref{classification-propagate-subspace} with \(i = 2\) shows that
\(\beta^{[j],\vee}\) induces a short exact sequence
\[
0 \to
\mathrm{P}_1 V^{[j+1]} \cap \mathrm{P}_1' V^{[j+1]} \to
\mathrm{P}_1 V^{[j+1]} \cap \mathrm{P}_{j+1}' V^{[j+1]} \to
\big(V^{[j]}/(\mathrm{P}_2 V^{[j]} + \mathrm{P}_j' V^{[j]})\big)^\vee \to
0.
\]
Comparing \(j = m-1\) and \(j = m-2\epsilon-1\) then gives the isomorphism
\[
\frac{\mathrm{P}_1 V^{[m]} \cap \mathrm{P}_{m-\epsilon-1}' V^{[m]}}
     {\mathrm{P}_1 V^{[m]} \cap \mathrm{P}_{m+\epsilon-1}' V^{[m]}}
\cong
\left(
  \frac{\mathrm{P}_2 V^{[m-1]} + \mathrm{P}_{m+\epsilon-2}' V^{[m-1]}}
       {\mathrm{P}_2 V^{[m-1]} + \mathrm{P}_{m-\epsilon-2}' V^{[m-1]}}
\right)^\vee
\]
which implies the claimed exact sequence.
\end{proof}

Finally, a semi-linear algebra statement used in the main step of the
Classification Theorem to produce an orthogonal complement. The following says
that given a \(q\)-linear map \(V \to W\) between vector spaces, under certain
conditions, any subspace of \(V\) admits a complement with linearly disjoint
image. The hypothesis below is necessary, as can be seen by considering any
surjective map.

\begin{Lemma}\label{classification-q-linear-complement}
Let \(V\) and \(W\) be vector spaces over \(\kk\), and let \(f \colon V^{[1]} \to W\)
be a linear map. Assume that \(\ker(f)\) descends to \(V\). Then any subspace
\(V'\) of \(V\) admits a complement \(V''\) such that
\[
f(V'^{[1]}) \cap f(V''^{[1]}) = \{0\}.
\]
\end{Lemma}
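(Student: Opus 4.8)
The plan is to reduce this semi-linear assertion to a purely linear statement about subspaces of $V$, and then to build the complement by hand. Write $K \subseteq V$ for a Frobenius descent of $\ker(f)$, which exists by hypothesis, so that $\ker(f) = K^{[1]}$. Since $\kk$ is a field, the Frobenius twist $(-)^{[1]} = \kk \otimes_{\Fr,\kk}(-)$ is base change along a homomorphism of fields, hence an exact functor on $\kk$-vector spaces; consequently, for subspaces $A, B \subseteq V$, the natural map $A^{[1]} \to V^{[1]}$ is injective, $A^{[1]} \subseteq B^{[1]}$ if and only if $A \subseteq B$, and one has $A^{[1]} + B^{[1]} = (A+B)^{[1]}$ and $A^{[1]} \cap B^{[1]} = (A \cap B)^{[1]}$ as subspaces of $V^{[1]}$.

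First I would rewrite the target condition. For subspaces $V', V'' \subseteq V$, a vector $f(x)$ with $x \in V'^{[1]}$ lies in $f(V''^{[1]})$ exactly when $x - y \in \ker(f) = K^{[1]}$ for some $y \in V''^{[1]}$, that is, when $x \in V''^{[1]} + K^{[1]}$; hence
\[
f(V'^{[1]}) \cap f(V''^{[1]}) = f\big(V'^{[1]} \cap (V''^{[1]} + K^{[1]})\big),
\]
and this vanishes if and only if $V'^{[1]} \cap (V''^{[1]} + K^{[1]}) \subseteq K^{[1]}$. Pushing $\cap$ and $+$ through the exact twist, the left-hand side equals $\big(V' \cap (V'' + K)\big)^{[1]}$, so the condition becomes the linear requirement $V' \cap (V'' + K) \subseteq K$, equivalently $V' \cap (V'' + K) = V' \cap K$ since the reverse inclusion is automatic; by the modular law \parref{forms-orthogonal-intersection-sum} this is the same as asking $(V' + K) \cap (V'' + K) = K$.

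It then remains to produce a complement $V''$ of $V'$ satisfying $V' \cap (V'' + K) = V' \cap K$. I would choose a complement $K_2$ of $V' \cap K$ in $K$, so that $K = (V' \cap K) \oplus K_2$ and, because $V' \cap K_2 = 0$, also $V' + K = V' \oplus K_2$. Extending this to a direct sum decomposition $V = V' \oplus K_2 \oplus W_0$ and setting $V'' \coloneqq K_2 \oplus W_0$ gives a complement of $V'$ in $V$. Since $K_2 \subseteq K$, one computes $V'' + K = (V' \cap K) \oplus K_2 \oplus W_0$, and intersecting with $V'$, using $V' \cap (K_2 \oplus W_0) = 0$ together with the modular law once more, gives $V' \cap (V'' + K) = V' \cap K$, as needed.

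All of this is routine once the translation is in place; the step I would be most careful about is the reduction in the first two paragraphs. The two points that matter are that $\ker(f)$ has the form $K^{[1]}$, which is exactly where the descent hypothesis enters (compare the discussion in \parref{forms-fr-twist-orthogonal}), and that the Frobenius twist, being exact over a field, commutes with forming sums and intersections of subspaces — so that the semi-linear vanishing condition is genuinely equivalent to the linear one. The hypothesis is also seen to be necessary here: for a surjective $f$ whose kernel does not descend, no such $V''$ can exist.
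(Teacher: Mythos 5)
Your proof is correct and follows essentially the same route as the paper: both descend $\ker(f)$ to a subspace $K\subseteq V$ and build $V''$ as the direct sum of a complement of $V'\cap K$ inside $K$ with a complement of $V'+K$ inside $V$, then check disjointness of the images by pushing the computation through the (exact) Frobenius twist. The only cosmetic differences are that you make the reduction to the purely linear condition $V'\cap(V''+K)\subseteq K$ explicit up front and absorb $\ker(f)$ into the $V''$ factor rather than the $V'$ factor.
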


\begin{proof}
Let \(\tilde{f} \colon V \to W\) be the \(q\)-linear map obtained by
precomposing \(f\) with the canonical map \(V \to V^{[1]}\). The additive
subgroup \(K \coloneqq \ker(\tilde{f})\) is, in fact, a linear subspace of
\(V\), and is the Frobenius descent of \(\ker(f)\). Choose a complement
\(V_1''\) to \(V' + K\) in \(V\), and choose a complement \(V_2''\) to
\(V' \cap K\) in \(K\). Then \(V'' \coloneqq V_1'' \oplus V_2''\) is a
complement to \(V'\) in \(V\) that satisfies the condition: since
\[
f(V'^{[1]}) \cap f(V''^{[1]}) =
f\big((V'^{[1]} + \ker(f)) \cap V''^{[1]}\big) =
f\big(((V' + K) \cap V'')^{[1]}\big)
\]
and since the rightmost intersection is contained inside \(K^{[1]} = \ker(f)\),
the intersection is \(\{0\}\).
\end{proof}

The following explains how to split off orthogonal summands of standard type
\(\mathbf{N}_m\) once the first \(m\) steps of the
\(\perp^{[\smallbullet]}\)-filtration are visible on \(V\) over \(\kk\):

\begin{Proposition}\label{classification-peel}
Let \(m \geq 1\) be an integer and assume that the first \(m\) steps of
the \(\perp^{[\smallbullet]}\)-filtration descend to \(V\) over \(\kk\).
Then there exists an orthogonal decomposition
\[
\beta = \beta' \oplus \beta''
\]
where \(\beta'\) has a Gram matrix given by
\(\mathbf{N}_m^{\oplus b}\) for \(b \coloneqq b_m(V,\beta)\).
\end{Proposition}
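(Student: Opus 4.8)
The plan is to realise the summand $(V',\beta')$ as an $mb$-dimensional subspace $V'$ of $V$, where $b \coloneqq b_m(V,\beta)$ and $\epsilon \coloneqq (-1)^m$; the case $b = 0$ being vacuous, assume $b \geq 1$. Since the first $m$ steps of the $\perp^{[\smallbullet]}$-filtration descend to $V$, the subspaces $\mathrm{P}_i' V \subseteq V$ with $0 \leq i \leq m$ are all defined, and \parref{forms-invariants-decreasing} identifies $b$ with the dimension of $\mathrm{P}_1 V \cap \mathrm{P}_{m-\epsilon-1}' V$ modulo $\mathrm{P}_1 V \cap \mathrm{P}_{m+\epsilon-1}' V$. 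First I would fix a $b$-dimensional subspace $V_1 \subseteq \mathrm{P}_1 V \cap \mathrm{P}_{m-\epsilon-1}' V$ lifting this quotient; matching against the standard forms in \parref{standard-forms-filtration-example}, $V_1$ is to play the role of the span of the first vectors of the $\mathbf{N}_m$-blocks, so in particular $V_1 \subseteq \ker\beta$.

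Next I would propagate $V_1$ through the $\perp$-filtration to build subspaces $V_2, \ldots, V_m$, each of dimension $b$, with $V' \coloneqq \bigoplus_{i=1}^m V_i$ satisfying the hypotheses of \parref{classification-basis}. The seesaw identity of \parref{classification-propagate-subspace} and its corollary \parref{classification-complementary-to-kernel} are the engine: the coincidence of $\image(\beta^{[j-1]})$ with $\image(\beta^{[j],\vee})$ on the relevant filtration intersections lets one inductively choose $V_{i+1}$ so that $\beta$ identifies it with $V_i^{[1],\vee}$ and so that $\image(\beta\colon V_{i+1} \to V^{[1],\vee})$ equals $\image(\beta^{[1],\vee}\colon V_{i-1}^{[2]} \to V^{[1],\vee})$, which are conditions \parref{classification-basis}\ref{classification-basis.perfect} and \ref{classification-basis.images}; condition \ref{classification-basis.kernel}, that $V_1 = \mathrm{P}_1 V'$ and $V_m^{[1]} = \mathrm{P}_1' V'^{[1]}$, should follow from $V_1 \subseteq \ker\beta$ and the vanishing forced on the last block during the construction. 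Then \parref{classification-basis} gives that $\beta' \coloneqq \beta|_{V'}$ has Gram matrix $\mathbf{N}_m^{\oplus b}$.

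It remains to split $V'$ off, i.e.\ — per the criterion of \parref{forms-orthogonal-complements} — to exhibit a complement $V''$ of $V'$ in $V$ with $V''^{[1]}$ inside the total orthogonal $V'^\perp \cap V'^{[2],\perp^{[1]}}$ of $V'$, whence $\beta = \beta' \oplus \beta''$ with $\beta'' \coloneqq \beta|_{V''}$. Through the canonical map $V \to V^{[1]}$, the two requirements $V''^{[1]} \subseteq V'^\perp$ and $V'' \subseteq V'^{[1],\perp}$ amount to asking that $V''$ be a $\kk$-rational complement of $V'$ lying in $V'^{[1],\perp}$ and in the kernel of a $q$-linear map out of $V$ built from the structure maps of $\beta$ and their Frobenius twists; the obstruction is that over an imperfect field these conditions cannot be met simultaneously for a naive reason of Frobenius non-surjectivity. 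This is exactly the situation resolved by \parref{classification-q-linear-complement}, whose descending-kernel hypothesis is supplied by the Proposition's assumption, since it guarantees that $\ker(\beta^\vee) = \mathrm{P}_1' V^{[1]}$ descends to $V$; applied (possibly after first restricting to $V'^{[1],\perp}$) it produces a complement with a disjointness-of-images property, and a dimension count — using $V' \cap \ker\beta = V_1$ and the explicit shape of $\beta'$ just established — upgrades this disjointness to the orthogonality needed. Carrying out this final reconciliation is, I expect, the main obstacle; afterwards, \parref{forms-filtrations-sums} shows that the two filtrations split along $\beta = \beta' \oplus \beta''$, so that in particular $b_m(V'',\beta'') = 0$.
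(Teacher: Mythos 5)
Your overall strategy coincides with the paper's: lift a \(b\)-dimensional subspace \(V_1\) out of \((\mathrm{P}_1 V \cap \mathrm{P}'_{m-\epsilon-1}V)/(\mathrm{P}_1 V\cap\mathrm{P}'_{m+\epsilon-1}V)\), propagate it through the seesaw \parref{classification-propagate-subspace} and \parref{classification-complementary-to-kernel} to obtain \(V_2,\dots,V_m\) satisfying the recognition criterion \parref{classification-basis}, and invoke \parref{classification-q-linear-complement} to handle the Frobenius-descent obstruction to an orthogonal complement. The construction of \(V'\) with Gram matrix \(\mathbf{N}_m^{\oplus b}\) is essentially as in the paper.

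The gap is in the final step, which you yourself flag as ``the main obstacle,'' and your proposed placement of \parref{classification-q-linear-complement} does not close it. You need a complement \(V''\) of \(V'\) with \(V''^{[1]}\) inside the total orthogonal \(V'^{\perp}\cap V'^{[2],\perp^{[1]}}\). If you wait until \(V'\) is fully built and then apply \parref{classification-q-linear-complement} to \(\beta^\vee\colon V^{[1]}\to V'^\vee\), the hypothesis you need is that its kernel \(V'^\perp\) descends to \(V\) --- and that is not what the Proposition's assumption supplies: it only gives descent of the filtration pieces \(\mathrm{P}'_iV^{[i]}\) (e.g.\ of \(\ker\beta^\vee=\mathrm{P}'_1V^{[1]}\)), not of \(V'^\perp\) for an arbitrarily chosen \(V'\). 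For a careless choice of the \(V_i\) the total orthogonal need not descend at all, so no dimension count can rescue that choice; one must choose \(V'\) better, not repair the complement afterwards. The paper's resolution is to apply \parref{classification-q-linear-complement} at the very start, to \(f=\beta^\vee\colon V^{[1]}\to\mathrm{P}'_{m+\epsilon-2}V^\vee\), whose kernel \(\mathrm{P}'_{m+\epsilon-1}V^{[1]}\) does descend by hypothesis. This yields a complement \(V_1''\) of \(V_1\) with \(\beta^\vee(V_1^{[1]})\eqqcolon W_1\) and \(\beta^\vee(V_1''^{[1]})\) linearly disjoint, and \(V_2\) is then taken to be the subspace of \(\mathrm{P}'_{m+\epsilon-2}V\) dual to \(W_1\), which forces \(V_2^\perp=V_1''^{[1]}\) --- a subspace already exhibited as a Frobenius twist. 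Combined with the seesaw identities \(V_{i+1}^\perp=V_{i-1}^{[2],\perp^{[1]}}\), the total orthogonal computes to \(\bigl(V_1''\cap\bigcap_{i=1}^{m-1}V_i^{[1],\perp}\bigr)^{[1]}\), visibly descended, and the codimension bound together with radical-freeness of \(\beta'\) finishes. So the complement lemma must be used to steer the choice of \(V_2\), not merely invoked at the end.
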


\begin{proof}
When \(m = 1\), let
\(V' \coloneqq \mathrm{P}_1 V \cap \mathrm{P}_1' V\) and let
\(V''\) be any complementary subspace in \(V\). Restricting \(\beta\)
to these subspaces gives subforms \(\beta'\) and \(\beta''\), respectively,
satisfying the properties of the statement in this case. Thus, in the
remainder, assume that \(m \geq 2\).

Construct subspaces \(V_i' \subseteq V\) for \(1 \leq i \leq m\) as follows:
Let \(\epsilon \coloneqq (-1)^m\). By \parref{forms-invariants-decreasing}
and \parref{classification-complementary-to-kernel}, it is possible to choose a
\(b\)-dimensional subspace
\(V_1' \subseteq \mathrm{P}_1 V \cap \mathrm{P}'_{m-\epsilon-1} V\)
such that the map
\[
V_1'^{[1]} \xrightarrow{\sim}
\left(
\frac{\mathrm{P}_1 V \cap \mathrm{P}_{m-\epsilon-1}' V}
     {\mathrm{P}_1 V \cap \mathrm{P}_{m+\epsilon-1}' V}
\right)^{[1]}
\hookrightarrow
\left(
  \frac{\mathrm{P}_{m+\epsilon-2}'V}
       {\mathrm{P}_{m-\epsilon-2}'V}
\right)^\vee \hookrightarrow
\mathrm{P}_{m+\epsilon-2}'V^\vee
\]
induced by \(\beta^\vee\) is an isomorphism onto its image \(W_1\).
Since the kernel of
\(\beta^\vee \colon V^{[1]} \to \mathrm{P}_{m+\epsilon-2}' V^\vee\) is
\(\mathrm{P}_{m+\epsilon-1}' V^{[1]}\), which admits a descent to \(V\) by
hypothesis,
\parref{classification-q-linear-complement} applies to give a complement
\(V_1''\) to \(V_1'\) in \(V\) whose images under \(\beta^\vee\) are linearly
disjoint. Extend the image of \(V_1''^{[1]}\) under \(\beta^\vee\) to a
complement \(W_1'\) of \(W_1\) so that
\(\mathrm{P}_{m+\epsilon-2}' V^\vee = W_1 \oplus W_1'\).
View \(W_1\) as a quotient and let \(V_2' \subseteq \mathrm{P}_{m+\epsilon-2}' V\)
be the dual subspace. Then \(V_2'\) is disjoint from
\(\mathrm{P}_{m-\epsilon-2}' V\), the map
\(\beta^\vee \colon V_1'^{[1]} \to V_2'^\vee\) is an isomorphism, and
\[
V_2'^\perp
\coloneqq \ker(\beta^\vee \colon V^{[1]} \to V_2'^\vee)
= \ker(\beta^\vee \colon V^{[1]} \to W_1)
= V_1''^{[1]}.
\]
Starting from \(V_1'\) and \(V_2'\), apply the second part of
\parref{classification-propagate-subspace} to successively choose
\(b\)-dimensional subspaces
\[
\begin{dcases*}
V'_{2k-1} \subseteq
\mathrm{P}_{2k-1} V \cap \mathrm{P}_{m-\epsilon-2k+1}' V
& for \(1 \leq k \leq \lceil m/2 \rceil\), and \\
V'_{2k\phantom{-1}} \subseteq
\mathrm{P}_{2k-2} V \cap \mathrm{P}_{m+\epsilon-2k\phantom{-1}}' V
& for \(1 \leq k \leq \lfloor m/2 \rfloor\),
\end{dcases*}
\]
linearly disjoint from \(\mathrm{P}_{m+\epsilon-2k+1}'V\) and
\(\mathrm{P}_{m-\epsilon-2k}'V\), respectively, and such that there is
an equality of images
\(
\image(\beta \colon V_{i+1}' \to V^{[1],\vee})
=
\image(\beta^{[1],\vee} \colon V_{i-1}'^{[2]} \to V^{[1],\vee})
\)
for each \(1 < i < m\).

The \(V_i'\) are therefore disjoint and together span an \(mb\)-dimensional subspace
\[
V' \coloneqq \bigoplus\nolimits_{i = 1}^m V_i' \subseteq V.
\]
The restriction \(\beta'\) of \(\beta\) satisfies the hypotheses of
\parref{classification-basis} with respect to this decomposition:
\ref{classification-basis.perfect} follows from the choice of
\(V_1'\) and \(V_2'\), and \ref{classification-basis.images} follows
from the construction of the \(V_i'\) via
\parref{classification-propagate-subspace}. Since each \(V_i'\) is
\(b\)-dimensional, this implies that \(\beta'\) has corank \(b\). Since
\(V_1' \subseteq \mathrm{P}_1 V'\) and
\(V_m' \subseteq \mathrm{P}_1' V'\) by construction, comparing dimensions then
gives \ref{classification-basis.kernel}. Therefore
\(\beta'\) has a Gram matrix given by \(\mathbf{N}_m^{\oplus b}\).

It remains to observe that the total orthogonal of \(V'\) descends to a
complement \(V''\) in \(V\). Taking orthogonals and
applying
\parref{forms-orthogonal-inclusion-reversing}\ref{forms-orthogonal-inclusion-reversing.sum}
to the direct sum decomposition shows that the total orthogonal is
\[
V'^{\perp}
\cap
V'^{[2],\perp^{[1]}}
=
\Big(\bigcap\nolimits_{i = 1}^m V_i'^\perp \Big)
\cap
\Big(\bigcap\nolimits_{i = 1}^m V_i'^{[2],\perp^{[1]}}\Big).
\]
By construction, \(V_{i+1}'^{\perp} = V_{i-1}'^{[2],\perp^{[1]}}\)
for each \(1 < i < m\) and
\(V_1'^{\perp} = V^{[1]} = V_m'^{[2],\perp^{[1]}}\), so
\[
V'^{\perp}
\cap
V'^{[2],\perp^{[1]}}
=
V_2'^{\perp} \cap
\Big(
  \bigcap\nolimits_{i = 1}^{m-1} V_i'^{[2],\perp^{[1]}}
\Big)
=
\Big(
V_1'' \cap
\Big(\bigcap\nolimits_{i = 1}^{m-1} V_i'^{[1],\perp}\Big)
\Big)^{[1]}
\eqqcolon
V''^{[1]}
\]
where the middle equality is due to the choice of \(V_2'\). Thus the total
orthogonal admits a Frobenius descent to a subspace \(V''\) of \(V\) which
has codimension at most \(mb\). Since \(\beta'\) does not have a radical,
\(V'\) and \(V''\) are linearly disjoint, and it follows that they are
complementary subspaces of \(V\). Taking \(\beta''\) to be the restriction of
\(\beta\) to \(V''\) then gives the result.
\end{proof}

\begin{proof}[Proof of \parref{forms-classification-theorem}]
By definition of \(\nu\) from \parref{filtrations-nu}, the
\(\perp^{[\smallbullet]}\)-filtration descends to \(V^{[\nu]}\) over \(\kk\). Since
formation of the filtrations is compatible with orthogonal decompositions by
\parref{forms-filtrations-sums}, \parref{classification-peel} may be
successively applied for each integer \(m \geq 1\) to produce orthogonal
summands \(\beta_m\) with Gram matrices \(\mathbf{N}_m^{\oplus b_m}\).
Since only finitely many of the invariants \(b_m\) are nonzero, this eventually
terminates, and the remaining piece \(\beta_0\) is a nonsingular subform of
\(\beta\) lifting that on \(\mathrm{P}_+ V/\mathrm{P}_- V\) as in
\parref{forms-nondegenerate-limit}.
\end{proof}

\section{Automorphisms}\label{section-automorphisms}
This Section is concerned with automorphism group schemes of \(q\)-bic forms.
For nondegenerate forms, this recovers the classical unitary group, see
\parref{auts-examples-unitary}. In general, these schemes are
positive-dimensional and are often nonreduced, as seen in the examples
\parref{auts-examples-standard} and \parref{auts-examples-1a+N2b}. The main
observation in this Section is that the nonreduced structure reflects the
fact that the \(\perp^{[\smallbullet]}\)-filtration canonically exists upon passing
to a Frobenius twist and that automorphisms need not preserve a descent of
the filtration; compare \parref{auts-preserve-filtrations} and
\parref{aut-reduced-preserves-perp'} with the main result
\parref{auts-smooth-parabolic} and its consequence
\parref{auts-identify-reduced}. This is used to compute the dimension of the
automorphism groups in \parref{aut-dimension}.

\subsectiondash{Automorphism group schemes}\label{forms-aut-schemes}
Let \((M,\beta)\) be a \(q\)-bic form over a \(\mathbf{F}_{q^2}\)-algebra \(R\).
Given an \(R\)-algebra \(S\), write \(M_S \coloneqq M \otimes_R S\) and
\(\beta_S \coloneqq \beta \otimes \id_S\) for the \(q\)-bic form over \(S\)
obtained by extension of scalars. Consider the group-valued functor
\[
\AutSch_{(M,\beta)} \colon
\mathrm{Alg}_R \to \mathrm{Grps}
\qquad
S \mapsto \Aut(M_S, \beta_S)
\]
that sends \(R\)-algebra \(S\) to the automorphism group of
\((M_S,\beta_S)\). This is the subfunctor of linear automorphisms
\(\mathbf{GL}_M\) of \(M\) which stabilizes the element
\(\beta \in \Hom_R(M^{[1]} \otimes_R M, R)\). Since \(M\) is always
assumed to be finite projective, this is represented by a closed subgroup
scheme of \(\mathbf{GL}_M\) by \cite[II.1.2.4 and II.1.2.6]{DG}, and is
referred to as the \emph{automorphism group scheme of \((M,\beta)\)}.

Since the \(\perp\)- and \(\perp^{[\smallbullet]}\)-filtrations from
\parref{forms-canonical-filtration} and
\parref{forms-canonical-filtration-second} are
intrinsic to a \(q\)-bic form \((M,\beta)\), they are preserved by points of
the automorphism group scheme whenever the constructions are compatible with
scalar extension, for instance when each piece is a local direct summand.
In other words:

\begin{Lemma} \label{auts-preserve-filtrations}
Assume that the submodules \(\mathrm{P}_i M \subseteq M\) and
\(\mathrm{P}_i' M^{[i]} \subseteq M^{[i]}\) are local direct summands for
every integer \(i \geq 0\). Then \(\AutSch_{(M,\beta)}\) factors through
the closed subgroup scheme of \(\GL_M\) given by
\[
\pushQED{\qed}
\AutSch_{(M,\mathrm{P}_{\smallbullet} M, \mathrm{P}_{\smallbullet}' M^{[\smallbullet]})} \colon
S \mapsto
\Set{g \in \mathrm{GL}(M_S) |
g \cdot \mathrm{P}_i M_S = \mathrm{P}_i M_S
\;\text{and}\;
g^{[i]} \cdot \mathrm{P}_i M_S^{[i]} = \mathrm{P}_i M_S^{[i]}\;
\text{for each}\; i\geq 0}.
\qedhere
\popQED
\]
\end{Lemma}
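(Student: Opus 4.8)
The plan is to show that an isomorphism of \(q\)-bic forms automatically preserves both canonical filtrations, after first recording that the hypothesis makes those filtrations compatible with base change. Since \(\AutSch_{(M,\beta)}\) is the subfunctor of \(\GL_M\) of automorphisms of the tensor \(\beta\), it suffices to prove that for every \(R\)-algebra \(S\) and every \(g \in \Aut(M_S,\beta_S)\) one has \(g \cdot \mathrm{P}_i M_S = \mathrm{P}_i M_S\) and \(g^{[i]} \cdot \mathrm{P}_i' M_S^{[i]} = \mathrm{P}_i' M_S^{[i]}\) for all \(i \geq 0\), where \(\mathrm{P}_i M_S \coloneqq \mathrm{P}_i M \otimes_R S\) and \(\mathrm{P}_i' M_S^{[i]} \coloneqq (\mathrm{P}_i' M^{[i]}) \otimes_R S\); that these conditions cut out a closed subgroup scheme of \(\GL_M\) is the standard fact that the stabilizer of a local direct summand is closed, intersected over all \(i\).

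First I would check that, under the hypothesis, \(\mathrm{P}_i M \otimes_R S\) and \((\mathrm{P}_i' M^{[i]}) \otimes_R S\) are the \(i\)-th pieces of the \(\perp\)- and \(\perp^{[\smallbullet]}\)-filtrations of \((M_S,\beta_S)\). This is a straightforward induction on \(i\): each \(\mathrm{P}_i M\), each \(\mathrm{P}_i' M^{[i]}\), and hence each relevant quotient of \(M\) or \(M^{[i]}\), is a local direct summand, so the defining exact sequences — as in \parref{forms-orthogonal-sequence} — remain exact after \(\otimes_R S\); since Frobenius twisting is itself a base change, the inductive identification of \(\mathrm{P}_{i-1}M^{[1]} \otimes_R S\) with \(\mathrm{P}_{i-1}M_S^{[1]}\) propagates. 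After this reduction I may assume \((M,\beta)\) is defined over the base and that \(g\) is a genuine automorphism.

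The core of the proof is then a simultaneous induction on \(i\) showing \(g(\mathrm{P}_i M) = \mathrm{P}_i M\) and \(g^{[i]}(\mathrm{P}_i' M^{[i]}) = \mathrm{P}_i' M^{[i]}\); the case \(i = 0\) is trivial as \(\mathrm{P}_0 M = \mathrm{P}_0' M = M\). For the step, \(m \in \mathrm{P}_i M\) means \(\beta(n', m) = 0\) for all \(n' \in \mathrm{P}_{i-1}M^{[1]} = (\mathrm{P}_{i-1}M)^{[1]}\). The defining relation of an automorphism, \(\beta(n', m) = \beta(g^{[1]}(n'), g(m))\), together with the inductive hypothesis that \(g^{[1]}\) stabilizes \((\mathrm{P}_{i-1}M)^{[1]}\), gives \(\beta(n', g(m)) = \beta((g^{[1]})^{-1}(n'), m)\) with \((g^{[1]})^{-1}(n')\) again ranging over \(\mathrm{P}_{i-1}M^{[1]}\) — so \(g(m) \in \mathrm{P}_i M\) if and only if \(m \in \mathrm{P}_i M\). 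The statement for \(\mathrm{P}_i' M^{[i]} = \ker(M^{[i]} \xrightarrow{\beta^{[i-1],\vee}} M^{[i-1],\vee} \to (\mathrm{P}_{i-1}' M^{[i-1]})^\vee)\) is identical, now using that \(g^{[i-1]}\) is an automorphism of the twisted form \((M^{[i-1]},\beta^{[i-1]})\) and, by induction, stabilizes \(\mathrm{P}_{i-1}' M^{[i-1]}\); its Frobenius twist \(g^{[i]}\) then stabilizes \(\mathrm{P}_i' M^{[i]}\).

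I expect the only genuinely delicate point to be the base change compatibility of the two filtrations in the second step — this is exactly what the local direct summand hypothesis is there to guarantee — while the inductive argument of the third step is a purely formal consequence of the definition \parref{forms-definition} of a morphism of \(q\)-bic forms. So the main obstacle is bookkeeping with duals and Frobenius twists rather than a conceptual difficulty.
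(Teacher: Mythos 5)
The paper states this lemma without proof (it carries the end-of-proof symbol in the statement itself), regarding it as immediate from the preceding remark that the two filtrations are intrinsic to \((M,\beta)\) and, under the local-direct-summand hypothesis, compatible with scalar extension; your argument is exactly that remark made explicit — base-change compatibility of the filtration pieces, followed by the formal induction using \(\beta(n',m)=\beta(g^{[1]}(n'),g(m))\) and the fact that each piece is the kernel of a pairing against the previous one. So your proof is correct and takes essentially the same approach as the paper's intended one.
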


\subsectiondash{}\label{auts-preserve-descent}
However, \(\mathbf{GL}_M\) acts on \(M^{[i]}\) through its
\(R\)-linear Frobenius morphism, which, in coordinates, raises each matrix
coefficient to its \(q^i\)-th power. Importantly, this means that infinitesimal
members of \(\AutSch_{(M,\beta)}\) might not preserve a descent
\(\mathrm{P}_{\smallbullet}' M\) of the \(\perp^{[\smallbullet]}\)-filtration
to \(M\). So, in the case that the \(\perp^{[\smallbullet]}\)-filtration
descends to \(M\) over \(R\), let
\[
\AutSch_{(M,\beta,\mathrm{P}'_{\smallbullet}M)} \colon
S \mapsto
\Set{g \in \Aut(M_S,\beta_S)
  | g \cdot \mathrm{P}_i' M_S = \mathrm{P}_i' M_S \;\text{for each}\; i \in \mathbf{Z}_{\geq 0}}
\]
be the closed subgroup scheme of \(\AutSch_{(M,\beta)}\) that additionally
preserves \(\mathrm{P}'_{\smallbullet} M\). Then, at least:

\begin{Lemma}\label{aut-reduced-preserves-perp'}
Assume that the \(\perp^{[\smallbullet]}\)-filtration descends to \(M\) and
that each piece \(\mathrm{P}_i'M\) is a local direct summand. Then, as
subschemes of \(\mathbf{GL}_M\), the reduced subscheme of
\(\AutSch_{(M,\beta)}\) is contained in
\(\AutSch_{(M,\beta,\mathrm{P}'_{\smallbullet} M)}\).
\end{Lemma}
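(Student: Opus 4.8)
The plan is to reduce the containment of subschemes to a statement about field-valued points---where every submodule is automatically a direct summand---and then to exploit that the \(q\)-power map on a field is injective. Since the reduced subscheme of \(\AutSch_{(M,\beta)}\) has the same underlying space as \(\AutSch_{(M,\beta)}\), it is contained in the closed subscheme \(\AutSch_{(M,\beta,\mathrm{P}'_{\smallbullet}M)}\) if and only if every point of \(\AutSch_{(M,\beta)}\) lies in \(\AutSch_{(M,\beta,\mathrm{P}'_{\smallbullet}M)}\); testing this with residue fields, it suffices to show that for every field \(K\) over \(R\) and every \(g \in \Aut(M_K,\beta_K)\) one has \(g \cdot \mathrm{P}_i'M_K = \mathrm{P}_i'M_K\) for all \(i \geq 0\). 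Here \(\mathrm{P}_i'M_K \coloneqq \mathrm{P}_i'M \otimes_R K\) is a subspace whose \(i\)-th Frobenius twist is the \(i\)-th piece of the \(\perp^{[\smallbullet]}\)-filtration of \((M_K,\beta_K)\): this uses the hypothesis that each \(\mathrm{P}_i'M\) is a local direct summand, so that the kernels defining the filtration commute with the base change \(R \to K\).

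First I would invoke \parref{auts-preserve-filtrations}, for the \(\perp^{[\smallbullet]}\)-filtration, to get that \(g^{[i]} \cdot \mathrm{P}_i'M_K^{[i]} = \mathrm{P}_i'M_K^{[i]}\) as subspaces of \(M_K^{[i]}\): the piece \(\mathrm{P}_i'M_K^{[i]}\) is intrinsic to \((M_K,\beta_K)\), hence preserved by the Frobenius twist \(g^{[i]}\) of the automorphism \(g\). Since twisting is functorial and, over a field, exact, the canonical maps \((\mathrm{P}_i'M_K)^{[i]} \hookrightarrow M_K^{[i]}\) and \((g\cdot\mathrm{P}_i'M_K)^{[i]} \hookrightarrow M_K^{[i]}\) have images \(\mathrm{P}_i'M_K^{[i]}\) and \(g^{[i]}\cdot\mathrm{P}_i'M_K^{[i]}\), respectively, so the displayed equality becomes \((g\cdot\mathrm{P}_i'M_K)^{[i]} = (\mathrm{P}_i'M_K)^{[i]}\) inside \(M_K^{[i]}\).

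The final step is the observation that the \(i\)-th Frobenius twist is injective on subspaces of a \(K\)-vector space: for \(N \subseteq M_K\) one has \(N = \Set{v \in M_K | v^{[i]} \in N^{[i]}}\). Indeed, choosing a basis \(v_1,\ldots,v_n\) of \(M_K\) with \(v_1,\ldots,v_d\) a basis of \(N\), a vector \(v = \sum_j a_j v_j\) has \(v^{[i]} = \sum_j a_j^{q^i} v_j^{[i]}\) relative to the basis \(v_1^{[i]},\ldots,v_n^{[i]}\) of \(M_K^{[i]}\), so \(v^{[i]} \in N^{[i]} = \langle v_1^{[i]},\ldots,v_d^{[i]}\rangle\) forces \(a_j^{q^i} = 0\), hence \(a_j = 0\), for \(j > d\). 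Applying this with \(N = \mathrm{P}_i'M_K\) and with \(N = g\cdot\mathrm{P}_i'M_K\) upgrades the previous equality to \(g\cdot\mathrm{P}_i'M_K = \mathrm{P}_i'M_K\), which completes the reduction and the proof.

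The one delicate point---and the reason the statement is confined to the reduced subscheme---is that \(\mathbf{GL}_M\) acts on \(M^{[i]}\) through the \(q^i\)-power Frobenius twist of matrices, as emphasized in \parref{auts-preserve-descent}, so preserving \(\mathrm{P}_i'M^{[i]}\) does not formally imply preserving \(\mathrm{P}_i'M\): the twist operation \(N \mapsto N^{[i]}\) on submodules is not injective over nonreduced rings. Reducedness is precisely what is needed to recover this injectivity, and it cannot be dropped, as the nonreduced automorphism group schemes in \parref{auts-examples-standard} and \parref{auts-examples-1a+N2b} demonstrate.
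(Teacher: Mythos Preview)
Your proof is correct and follows essentially the same route as the paper's. The paper reduces to showing that every \(g \in \Aut(M,\beta)\) over a \emph{reduced} \(R\)-algebra preserves each \(\mathrm{P}_i'M\), and then uses a commutative square comparing \(N \coloneqq g \cdot \mathrm{P}_i'M \to M/\mathrm{P}_i'M\) with its \(i\)-th Frobenius twist to conclude that \(N \subseteq \mathrm{P}_i'M\); you instead reduce to field-valued points and argue by coordinates that \(N \mapsto N^{[i]}\) is injective on subspaces. Both arguments hinge on the same point---injectivity of the canonical \(q^i\)-linear map on a finite projective module over a reduced ring---so the difference is only packaging.
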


\begin{proof}
It suffices to show that if \(R\) is reduced, then every
\(g \in \Aut(M,\beta)\) preserves \(\mathrm{P}'_{\smallbullet} M\). Since
the \(i\)-th Frobenius twist of \(g\) preserves \(\mathrm{P}'_i M^{[i]}\),
the image \(N \coloneqq g \cdot \mathrm{P}_i' M\) fits into a commutative
diagram
\[
\begin{tikzcd}
\mathrm{P}_i' M \rar["g"'] \dar &
N \rar \dar &
M/\mathrm{P}_i' M \dar \\
\mathrm{P}_i' M^{[i]} \rar["g^{[i]}"] &
\mathrm{P}_i' M^{[i]} \rar["0"] &
M^{[i]}/\mathrm{P}_i' M^{[i]}
\end{tikzcd}
\]
where the vertical maps are induced by the canonical map
\(M \to M^{[i]}\). Since
\(\mathrm{P}_i' M\) is a local direct summand of \(M\), all the modules are
finite projective; together with reducedness of \(R\), it follows that
all the vertical maps are injective. Commutativity of the right square implies
that the map \(N \to M/\mathrm{P}_i' M\) vanishes, and so
\(N \subseteq \mathrm{P}_i' M\), implying the result.
\end{proof}

The remainder of this Section is concerned with \(q\)-bic forms
\((V,\beta)\) over a field \(\kk\) and the dimension of their automorphism
group schemes. First, their Lie algebras are as follows:

\begin{Proposition}\label{forms-aut-tangent-space}
There is a canonical identification of \(\kk\)-vector spaces given by
\[
\Lie\AutSch_ {(V,\beta)}
\cong \Hom_\kk(V,\mathrm{P}_1 V).
\]
When the \(\perp^{[\smallbullet]}\)-filtration descends to \(V\), there
is a canonical identification
\[
\Lie\AutSch_{(V,\beta,\mathrm{P}'_{\smallbullet} V)}
\cong \Set{\varphi \in \Hom_\kk(V,\mathrm{P}_1 V) |
\varphi(\mathrm{P}'_i V) \subseteq \mathrm{P}_1 V \cap \mathrm{P}'_i V
\;\text{for each}\; i \in \mathbf{Z}_{\geq 0}}.
\]
\end{Proposition}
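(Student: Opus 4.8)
The plan is to compute the tangent space of $\AutSch_{(V,\beta)}$ at the identity via dual numbers, i.e.\ by analyzing which $\kk[\varepsilon]$-points restrict to the identity over $\kk$. An element of $\Lie\AutSch_{(V,\beta)}$ is a linear endomorphism $\varphi$ of $V$ such that $g \coloneqq \id_V + \varepsilon\varphi$, extended $\kk[\varepsilon]$-linearly to $V_{\kk[\varepsilon]}$, satisfies $\beta_{\kk[\varepsilon]}(g^{[1]}(m'),g(m)) = \beta_{\kk[\varepsilon]}(m',m)$ for all $m,m'$. First I would note the crucial point that the Frobenius twist $g^{[1]}$ of $g = \id_V+\varepsilon\varphi$ is $\id_{V^{[1]}} + \varepsilon^q \varphi^{[1]}$, and since $q \geq 2$ we have $\varepsilon^q = 0$ in $\kk[\varepsilon]$; hence $g^{[1]} = \id_{V^{[1]}}$ acts trivially. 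Expanding the invariance condition and collecting the coefficient of $\varepsilon$, the quadratic term $\varepsilon^2$ vanishes and the $\varepsilon^q$-term vanishes, leaving precisely the linear equation
\[
\beta(m', \varphi(m)) = 0 \quad\text{for all } m \in V,\ m' \in V^{[1]}.
\]
This says exactly that $\varphi(m) \in \ker(\beta\colon V \to V^{[1],\vee}) = \mathrm{P}_1 V$ for every $m$, i.e.\ $\varphi \in \Hom_\kk(V, \mathrm{P}_1 V)$. Conversely any such $\varphi$ manifestly gives an automorphism over $\kk[\varepsilon]$, since then the map $g$ is invertible (its reduction mod $\varepsilon$ is $\id_V$) and the displayed equation holds identically. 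This establishes the first identification; the fact that it is canonical is clear from the construction, as no choices were made.

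For the second statement, I would intersect the above description with the tangent space of the additional condition defining $\AutSch_{(V,\beta,\mathrm{P}'_\smallbullet V)}$, namely that $g$ preserve each $\mathrm{P}_i' V$. Over $\kk[\varepsilon]$, the automorphism $g = \id_V + \varepsilon\varphi$ preserves the local direct summand $\mathrm{P}_i' V \otimes_\kk \kk[\varepsilon]$ if and only if $\varphi(\mathrm{P}_i' V) \subseteq \mathrm{P}_i' V$: indeed, for $v \in \mathrm{P}_i'V$ we need $v + \varepsilon\varphi(v) \in \mathrm{P}_i'V_{\kk[\varepsilon]}$, which since $v$ is already there amounts to $\varepsilon\varphi(v) \in \mathrm{P}_i'V_{\kk[\varepsilon]}$, i.e.\ $\varphi(v) \in \mathrm{P}_i'V$. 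Combining with $\varphi(V) \subseteq \mathrm{P}_1 V$ from the first part gives $\varphi(\mathrm{P}_i'V) \subseteq \mathrm{P}_1 V \cap \mathrm{P}_i'V$, which is the asserted description. Again everything is canonical.

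I do not expect a serious obstacle here; the computation is routine once one isolates the key observation that Frobenius twisting kills the dual-number parameter because $q \geq 2$, so that the cross term $\beta(g^{[1]}(m'),g(m))$ linearizes to $\beta(m',m) + \varepsilon\,\beta(m',\varphi(m))$ with no contribution from the twist of $\varphi$. The only point requiring a little care is the passage between "$g$ preserves the summand $\mathrm{P}_i'V_{\kk[\varepsilon]}$" and the condition on $\varphi$, which uses that $\mathrm{P}_i'V$ is a direct summand of $V$ (hence its scalar extension to $\kk[\varepsilon]$ is the honest submodule $\mathrm{P}_i'V \otimes \kk[\varepsilon]$ and behaves well); this is exactly the hypothesis in force. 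I would also remark in passing — as the statement does — that under the first identification the two vector spaces $\Lie\AutSch_{(V,\beta)}$ and $\Hom_\kk(V,\mathrm{P}_1 V)$ are \emph{canonically} identified, which is what makes the dimension formula in Theorem~B meaningful via $\dim_\kk\Hom_\kk(V,\mathrm{P}_1 V) = \dim_\kk V \cdot \dim_\kk\mathrm{P}_1 V = (a + \sum mb_m)(\sum b_m)$, using $\dim_\kk \mathrm{P}_1 V = \corank(V,\beta) = \sum_{m\geq 1} b_m$ from \parref{forms-numerical-invariants-type}.
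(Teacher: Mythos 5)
Your proposal is correct and takes essentially the same route as the paper: both compute the Lie algebra via dual numbers \(D = \kk[\epsilon]/(\epsilon^2)\), use that the Frobenius twist of \(\id + \epsilon\varphi\) is the identity (since \(\epsilon^q = 0\)), extract the first-order condition \(\beta(\bar u,\varphi(\bar v)) = 0\) to get \(\varphi(V) \subseteq \mathrm{P}_1 V\), and obtain the second identification by intersecting with the condition that \(\varphi\) preserve each \(\mathrm{P}_i'V\). No gaps.
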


\begin{proof}
The Lie algebra of \(\AutSch_{(V,\beta)}\) is the subset of
\(\Aut(V_D,\beta_D)\), with \(D \coloneqq \kk[\epsilon]/(\epsilon^2)\),
consisting of automorphisms that restrict to the identity upon setting
\(\epsilon = 0\). Such elements may be written as \(\id + \epsilon \varphi\)
for a unique \(\kk\)-linear map \(\varphi \colon V \to V\). Since
\(\epsilon^2 = 0\), that \(\id + \epsilon \varphi\) preserves \(\beta_D\) means
that, for every \(u \in V_D^{[1]}\) and \(v \in V_D\),
\[
\beta_D(u,v) =
\beta_D\big((\id + \epsilon \varphi)^{[1]}(u), (\id + \epsilon\varphi)(v)\big) =
\beta_D(u,v) + \epsilon \beta(\bar{u},\varphi(\bar{v})),
\]
where \(\bar{u} \in V^{[1]}\) and \(\bar{v} \in V\) are the images of \(u\) and
\(v\) upon setting \(\epsilon = 0\). Since \(\bar{u}\) is arbitrary, this implies
that \(\varphi(\bar{v}) \in V^{[1],\perp} = \mathrm{P}_1 V\), giving
the first identification. This implies the second statement since
the Lie algebra of
\(\AutSch_{(V,\beta,\mathrm{P}'_{\smallbullet} V)}\) is the subspace of
\(\Lie\AutSch_{(V,\beta)}\) consisting of those \(\varphi\) which, in addition,
preserve the filtration \(\mathrm{P}'_{\smallbullet} V\).
\end{proof}

Before continuing, consider a few examples of automorphism groups and schemes
of \(q\)-bic forms over a field. Some more examples and explicit computations
may be found in
\cite[\href{https://arxiv.org/pdf/2205.05273.pdf\#chapter.3}{Chapter 3}]{thesis}.

\subsectiondash{Unitary groups}\label{auts-examples-unitary}
When \(\beta\) is nonsingular, \parref{forms-aut-tangent-space} implies that
\(\AutSch_{(V,\beta)} \eqqcolon \mathrm{U}(V,\beta)\) is an \'etale group
scheme over \(\kk\) which might be called the \emph{unitary group} of \(\beta\).
By \parref{forms-hermitian-diagonal} and the comments thereafter,
its group of points over a separable closure of \(\kk\) is isomorphic to the
classical finite unitary group \(\mathrm{U}_n(q)\), as in \cite[\S2.1]{ATLAS}.
Following Steinberg as \cite[Lecture 11]{Steinberg:Lectures},
the unitary group can also be described as \(\mathrm{U}(V,\beta) = \GL_V^{F_\beta}\),
the fixed points for the morphism of algebraic groups given by
\[
F_\beta \colon \GL_V \to \GL_V
\qquad
g \mapsto \beta^{-1} \circ g^{[1],\vee,-1} \circ \beta.
\]

\subsectiondash{Type \texorpdfstring{\(\mathbf{N}_n\)}{N_n}}
\label{auts-examples-standard}
The automorphism group of a form \(\beta\) with a Gram matrix \(\mathbf{N}_n\)
can be roughly determined by analyzing the proof of
\parref{classification-peel}: First, there is the choice of a basis vector for
\(V_1 = \mathrm{P}_1 V \cap \mathrm{P}_{n-\epsilon-1}' V\). This now uniquely
determines, via \parref{classification-propagate-subspace},
spaces \(V_i\) together with a basis vector whenever \(i\) has the same
parity as \(n\). Finally, when the parity of \(i\) and \(n\) are different,
the \(V_i\) and its basis are only determined up to an additive factor.
In particular, this gives \(\dim\AutSch_{(V,\beta)} = \lceil n/2 \rceil\).

In contrast, \parref{forms-aut-tangent-space} shows that
\(\dim_\kk\Lie\AutSch_{(V,\beta)} = n\), so these schemes are not reduced for any
\(n \geq 2\). The schematic structure can be explictly determined in low
dimensions; for example, \(\AutSch_{(V,\beta)}\) may be described for
\(n \in \set{2,3,4}\) as closed subschemes of \(\mathbf{GL}_n\) consisting of
matrices
\[
\begin{pmatrix}
\lambda & \epsilon_1 \\
0 & \lambda^{-q}
\end{pmatrix},
\quad
\begin{pmatrix}
\lambda & t                     & \epsilon_1 \\
0       & \lambda^{-q}          & 0 \\
0       & -\lambda^{q(q-1)} t^q & \lambda^{q^2}
\end{pmatrix},
\quad
\begin{pmatrix}
\lambda & \epsilon_3                & t             & \epsilon_1 \\
0       & \lambda^{-q}              & 0             & 0 \\
0       & \epsilon_2                & \lambda^{q^2} & -\lambda^{-q^2(q-1)}\epsilon_2^q \\
0       & -\lambda^{-q^2(q+1)}t^q   & 0             & \lambda^{-q^3}
\end{pmatrix}
\]
where \(\lambda \in \mathbf{G}_m\), \(t \in \mathbf{G}_a\), and
\(\epsilon_i \in \boldsymbol{\alpha}_{q^i}\), and when \(n = 4\),
the entries satisfy
\[
\epsilon_2^q t^q -
\lambda^{q(q^2-q+1)} \epsilon_2 -
\lambda^{q^3} \epsilon_3^q = 0.
\]

\subsectiondash{Type \texorpdfstring{\(\mathbf{1}^{\oplus a} \oplus \mathbf{N}_2^{\oplus b}\)}{1^a+N2^b}}
\label{auts-examples-1a+N2b}
One other situation in which \(\AutSch_{(V,\beta)}\) admits a reasonably neat
description is when \(\beta\) admits a Gram matrix of the form
\(\mathbf{1}^{\oplus a} \oplus \mathbf{N}_2^{\oplus b}\). Then the automorphism
group is a \(b^2\)-dimensional closed subgroup scheme of \(\GL_{a + 2b}\), and
is described in
\cite[\href{https://arxiv.org/pdf/2205.05273.pdf\#subsection.1.3.7}{\textbf{1.3.7}}]{thesis};
for \(a = b = 1\), this is the \(1\)-dimensional subgroup \(\mathbf{GL}_3\)
given by matrices of the form
\[
\begin{pmatrix}
\zeta & 0 & -\zeta \epsilon_2^q/\lambda^q \\
\epsilon_2 & \lambda & \epsilon_1 \\
0 & 0 & \lambda^{-q}
\end{pmatrix}
\]
where \(\lambda \in \mathbf{G}_m\), \(\epsilon_1 \in \boldsymbol{\alpha}_q\),
\(\epsilon_2 \in \boldsymbol{\alpha}_{q^2}\), and \(\zeta \in \boldsymbol{\mu}_{q+1}\).

\subsectiondash{Filtered automorphisms}\label{auts-filtered-maps}
The examples suggest that the nonreducedness of \(\AutSch_{(V,\beta)}\) arises
from its failure to preserve a descent of its
\(\perp^{[\smallbullet]}\)-filtration to \(V\). In particular, the subgroups
in \parref{auts-examples-standard} and \parref{auts-examples-1a+N2b} that
preserve \(\mathrm{P}_{\smallbullet}' V\) are in fact reduced. This turns
out to be general and is established below by studying infinitesimal
deformations of the identity automorphism of
\((V,\beta,\mathrm{P}_{\smallbullet}' V)\). In preparation, establish
some notation: write
\begin{align*}
\bar{V} & \coloneqq V/\mathrm{P}_1 V,
&
\mathrm{P}_{\smallbullet} \bar{V} & \coloneqq
\mathrm{P}_{\smallbullet} V/\mathrm{P}_1 V,
&
\mathrm{P}_{\smallbullet}' \bar{V} & \coloneqq
\mathrm{P}_{\smallbullet}' V/\mathrm{P}_1 V \cap \mathrm{P}_{\smallbullet}' V, \\
\bar{V}' & \coloneqq V/\mathrm{P}_1' V,
&
\mathrm{P}_{\smallbullet}' \bar{V}' & \coloneqq
\mathrm{P}_{\smallbullet}' V/\mathrm{P}_{\smallbullet} V \cap \mathrm{P}_1' V,
&
\mathrm{P}_{\smallbullet}' \bar{V}' & \coloneqq
\mathrm{P}_{\smallbullet}' V/\mathrm{P}_1' V
\end{align*}
for the quotients of \(V\) by the two kernels of \(\beta\) and for the
quotient filtrations induced by \(\mathrm{P}_{\smallbullet} V\) and
\(\mathrm{P}_{\smallbullet}' V\). Scalar extensions of these spaces
are denoted, as usual, via subscripts. Write \(\AutSch_{(V,\mathrm{P}_{\smallbullet} V, \mathrm{P}'_{\smallbullet} V)}\)
for the algebraic group of linear automorphisms of \(V\) that preserve the
two filtrations \(\mathrm{P}_{\smallbullet} V\) and \(\mathrm{P}'_{\smallbullet} V\),
and analogously with \(V\) replaced by \(\bar{V}\) and \(\bar{V}'\).
Then the pairing induced by \(\beta\) on the quotients satisfies:

\begin{Lemma}\label{auts-frobenius-seesaw}
The perfect pairing
\(\bar{\beta} \colon \bar{V}'^{[1]} \otimes_\kk \bar{V} \to \kk\) induces an
isomorphism
\[
\bar{\beta}_* \colon
(\AutSch_{(\bar{V}', \mathrm{P}_{\smallbullet} \bar{V}', \mathrm{P}_{\smallbullet}' \bar{V}')})^{[1]} \to
\AutSch_{(\bar{V}, \mathrm{P}_{\smallbullet} \bar{V}, \mathrm{P}_{\smallbullet}' \bar{V})}
\]
of linear algebraic groups over \(\kk\) whose value on an \(S\)-point
\(\varphi \colon \bar{V}_S'^{[1]} \to \bar{V}_S'^{[1]}\) is the composite
\[
\bar{\beta}_S^{-1} \circ \varphi^{\vee,-1} \circ \bar{\beta}_S
\colon
\bar{V}_S \to
\bar{V}_S'^{[1],\vee} \to
\bar{V}_S'^{[1],\vee} \to
\bar{V}_S.
\]
\end{Lemma}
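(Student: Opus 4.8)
The plan is to realize $\bar\beta_*$ as a composite of three elementary isomorphisms of algebraic groups over $\kk$ (much as for the map $F_\beta$ of \parref{auts-examples-unitary}) and then to check that this composite matches the two filtration-preserving conditions on either side.

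First I would set up the pairing. By \parref{forms-canonical-filtration} and \parref{forms-canonical-filtration-second}, $\mathrm{P}_1 V = V^{[1],\perp}$ and $\mathrm{P}_1' V^{[1]} = V^\perp$ are exactly the two kernels of $\beta$, so $\beta$ descends to a nondegenerate pairing $\bar\beta \colon \bar V'^{[1]} \otimes_\kk \bar V \to \kk$; concretely, \parref{forms-orthogonal-sequence}\ref{forms-orthogonal-sequence.everything} presents $\bar\beta$ as mutually inverse-dual isomorphisms $\bar\beta \colon \bar V \xrightarrow{\sim} \bar V'^{[1],\vee}$ and $\bar\beta^\vee \colon \bar V'^{[1]} \xrightarrow{\sim} \bar V^\vee$. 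Since forming $\GL$ of a finite-dimensional space commutes with Frobenius twist, $(\GL_{\bar V'})^{[1]}$ is canonically $\GL_{\bar V'^{[1]}}$; the contragredient $g \mapsto g^{\vee,-1}$ is an isomorphism $\GL_W \xrightarrow{\sim} \GL_{W^\vee}$; and conjugation by the linear isomorphism $\bar\beta$ gives $\GL_{\bar V'^{[1],\vee}} \xrightarrow{\sim} \GL_{\bar V}$. Composing the three yields an isomorphism $\bar\beta_* \colon (\GL_{\bar V'})^{[1]} \xrightarrow{\sim} \GL_{\bar V}$ of algebraic groups whose value on an $S$-point $\varphi$ is precisely $\bar\beta_S^{-1} \circ \varphi^{\vee,-1} \circ \bar\beta_S$, as in the statement; its inverse is assembled the same way from $\bar\beta^\vee$.

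Next I would reduce the subgroup claim to linear algebra. Frobenius twist carries the parabolic of $\GL_{\bar V'}$ stabilizing a subspace $W$ to the parabolic of $\GL_{\bar V'^{[1]}}$ stabilizing $W^{[1]}$, so $(\AutSch_{(\bar V',\mathrm{P}_{\smallbullet}\bar V',\mathrm{P}_{\smallbullet}'\bar V')})^{[1]}$ is $\AutSch_{(\bar V'^{[1]},\mathrm{P}_{\smallbullet}\bar V'^{[1]},\mathrm{P}_{\smallbullet}'\bar V'^{[1]})}$; and from the contragredient-conjugation recipe, $\bar\beta_*(\varphi)$ stabilizes a subspace $U \subseteq \bar V$ if and only if $\varphi$ stabilizes its $\bar\beta$-orthogonal complement in $\bar V'^{[1]}$. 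So the claim comes down to the identities
\[
(\mathrm{P}_i \bar V)^{\perp_{\bar\beta}} = \mathrm{P}_{i-1} \bar V'^{[1]}
\qquad\text{and}\qquad
(\mathrm{P}_i' \bar V)^{\perp_{\bar\beta}} = \mathrm{P}_{i+1}' \bar V'^{[1]}
\]
for all $i$: each filtration on $\bar V$ is $\bar\beta$-orthogonal, up to a harmless reindexing, to the like-named filtration on $\bar V'^{[1]}$. Granting these, ``stabilize every $\mathrm{P}_i\bar V$ and $\mathrm{P}_i'\bar V$'' translates term by term into ``stabilize every $\mathrm{P}_i\bar V'^{[1]}$ and $\mathrm{P}_i'\bar V'^{[1]}$'' --- the two reindexings run in opposite directions, but over the same collections of conditions --- and the lemma follows.

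Finally I would prove the two identities by unwinding the definitions. For the first, \parref{forms-canonical-filtration} gives $\mathrm{P}_i V = \ker(\beta \colon V \to \mathrm{P}_{i-1} V^{[1],\vee})$; passing to $\bar V$ and $\bar\beta$, this defining map becomes $\bar V \xrightarrow{\bar\beta} \bar V'^{[1],\vee} \to (\mathrm{P}_{i-1}\bar V'^{[1]})^\vee$, restriction along $\mathrm{P}_{i-1}\bar V'^{[1]} \hookrightarrow \bar V'^{[1]}$, whose kernel is exactly the $\bar\beta$-orthogonal of $\mathrm{P}_{i-1}\bar V'^{[1]}$. For the second I would first invoke the descent hypothesis together with the twist-compatibility of orthogonals from \parref{forms-fr-twist-orthogonal} to present $\mathrm{P}_i'V^{[1]}$ as $\ker(\beta^\vee \colon V^{[1]} \to \mathrm{P}_{i-1}'V^\vee)$, and then run the mirror computation with $\bar\beta^\vee$. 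I expect this last step to be the main obstacle: keeping the Frobenius twists and descents straight so that the $\perp^{[\smallbullet]}$-filtration genuinely appears as iterated $\beta^\vee$-orthogonals after twisting down, tracking the opposite-direction reindexings, and verifying the elementary but fussy fact that a Frobenius twist of a subspace-stabilizer is the stabilizer of the twisted subspace.
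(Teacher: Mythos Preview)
Your argument is correct and close in spirit to the paper's, but the route to the filtration-preservation step differs. You verify the two orthogonality identities
\[
(\mathrm{P}_i \bar V)^{\perp_{\bar\beta}} = \mathrm{P}_{i-1} \bar V'^{[1]}
\quad\text{and}\quad
(\mathrm{P}_i' \bar V)^{\perp_{\bar\beta}} = \mathrm{P}_{i+1}' \bar V'^{[1]}
\]
separately, directly from the recursive definitions of the two filtrations in \parref{forms-canonical-filtration} and \parref{forms-canonical-filtration-second} together with the twist-compatibility of \parref{forms-fr-twist-orthogonal}. The paper instead invokes the seesaw lemma \parref{classification-propagate-subspace} once to obtain the combined isomorphism
\[
\bar\beta \colon \mathrm{P}_{i+1}\bar V \cap \mathrm{P}_{j-1}'\bar V \xrightarrow{\sim} \big(\bar V'/(\mathrm{P}_i\bar V' + \mathrm{P}_j'\bar V')\big)^{[1],\vee},
\]
which packages both of your identities (and more) into a single statement. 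Your approach is more self-contained and makes the index shifts explicit; the paper's is terser by outsourcing the computation to a lemma already proved. Either way the content is the same: the two filtrations on \(\bar V\) and on \(\bar V'^{[1]}\) are exchanged by \(\bar\beta\)-orthogonality up to an index shift, so stabilizing one family is equivalent to stabilizing the other.
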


\begin{proof}
It remains to observe that \(\bar{\beta}_{*,S}(\varphi)\) defined as above
preserves the two filtrations on \(\bar{V}_S\). This follows from the seesaw
relation between the filtrations: \parref{classification-propagate-subspace}
implies that \(\bar{\beta}\) defines an isomorphism
\[
\bar{\beta} \colon
\mathrm{P}_{i+1} \bar{V} \cap \mathrm{P}_{j-1}' \bar{V} \xrightarrow{\sim}
\big(\bar{V}'/(\mathrm{P}_i \bar{V}' + \mathrm{P}_j' \bar{V}')\Big)^{[1],\vee}
\]
for all \(i, j \geq 0\), compatible with scalar extension. Then since
\(\varphi\) preserves the two filtrations on \(\bar{V}'^{[1]}_S\), its transpose
inverse induces an automorphism on the right-hand quotient. Transporting this
via \(\bar{\beta}^{-1}\) shows that \(\bar{\beta}_{*,S}(\varphi)\) preserves
the two filtrations on \(\bar{V}_S\).
\end{proof}

Let \(\mathrm{Art}_\kk\) be the category of Artinian \(\kk\)-algebras with
residue field \(\kk\). Let a \emph{\(q\)-small extension} denote a surjection
\(B \to A\) in \(\mathrm{Art}_\kk\) whose kernel is annihilated by the
\(q\)-power Frobenius. The following construction produces unique lifts, up to
Frobenius, along \(q\)-small extensions:

\begin{Lemma}\label{aut-small-lift-along-frob}
Let \(B \to A\) be a \(q\)-small extension. For any \(\kk\)-scheme \(X\) such
that \(X(B) \to X(A)\) is surjective, there exists a canonical map
\(\phi \colon X(A) \to X^{[1]}(B)\) fitting into a commutative diagram
\[
\begin{tikzcd}
X(B) \rar["\Fr_{X/\kk}"] \dar & X^{[1]}(B) \dar \\
X(A) \rar["\Fr_{X/\kk}"'] \ar[ur,"\phi"] & X^{[1]}(A)\punct{.}
\end{tikzcd}
\]
\end{Lemma}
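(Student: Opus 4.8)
The plan is to build $\phi$ from the fact that $q$-smallness makes the $q$-power Frobenius endomorphism of $B$ factor through $A$. Write $I \coloneqq \ker(B \to A)$, so that $x^q = 0$ for all $x \in I$ by hypothesis. Since $q$ is a power of the characteristic, the map $b \mapsto b^q$ is additive, hence $(b+x)^q = b^q$ for every $b \in B$ and $x \in I$; so the $q$-power Frobenius $\Fr_B \colon B \to B$ annihilates $I$ and, as $B \to A$ is surjective, factors uniquely through a ring map $A \to B$. Dually, the absolute Frobenius $\Fr_{\Spec B}$ factors as $\Spec B \xrightarrow{r} \Spec A \xrightarrow{\iota} \Spec B$ through the closed immersion $\iota$, for a unique morphism of schemes $r$. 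The two features of $r$ I will use are: that $r \circ \iota = \Fr_{\Spec A}$, which follows from the factorization together with $\iota$ being a monomorphism and naturality of absolute Frobenius; and that the structure morphism of $\Spec A$ precomposed with $r$ equals $\Fr_\kk$ composed with the structure morphism of $\Spec B$, so that $r$ determines a $\kk$-morphism $\rho \colon \Spec B \to (\Spec A)^{[1]}$.

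With $r$ in hand, define $\phi$ on an $A$-point $\bar x \colon \Spec A \to X$ by $\phi(\bar x) \coloneqq \bar x \circ r$; equivalently, $\phi(\bar x) = \bar x^{[1]} \circ \rho$ as a $\kk$-morphism $\Spec B \to X^{[1]}$. The composite $\bar x \circ r$ is a priori only a morphism of schemes $\Spec B \to X$, but the twisting property of $r$ shows that the structure morphism of $X$ composed with it equals $\Fr_\kk$ composed with the structure morphism of $\Spec B$, which is exactly the condition characterizing $B$-points of the Frobenius twist; thus $\phi(\bar x) \in X^{[1]}(B)$, as wanted. The assignment $\phi$ is canonical and functorial in $X$ because it is precomposition with the fixed morphism $r$. (The surjectivity hypothesis is not strictly needed for this construction; it is, however, what makes the alternative description $\phi(\bar x) = \Fr_{X/\kk}(\tilde x)$, for an arbitrary lift $\tilde x \in X(B)$ of $\bar x$, both available and independent of the choice of $\tilde x$ — independence being the identity $(b+x)^q = b^q$ once more.)

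To finish I would verify the two triangles using naturality of the absolute Frobenius, $\Fr_X \circ f = f \circ \Fr_{\Spec B}$ for $f \colon \Spec B \to X$, together with the fact that $\Fr_{X/\kk}$ composed with the canonical morphism $X^{[1]} \to X$ recovers $\Fr_X$. For the lower-right triangle: the image of $\phi(\bar x)$ under $X^{[1]}(B) \to X^{[1]}(A)$ is $\bar x \circ r \circ \iota = \bar x \circ \Fr_{\Spec A} = \Fr_X \circ \bar x$, which represents $\Fr_{X/\kk}(\bar x)$. For the upper-left triangle: if $\tilde x \in X(B)$ has image $\bar x = \tilde x \circ \iota$ in $X(A)$, then $\phi(\bar x) = \tilde x \circ \iota \circ r = \tilde x \circ \Fr_{\Spec B} = \Fr_X \circ \tilde x$, which represents $\Fr_{X/\kk}(\tilde x)$. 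I do not anticipate a genuine obstacle; the only delicate point is keeping straight the three flavours of Frobenius — the absolute $\Fr_{\Spec(-)}$, the relative $\Fr_{X/\kk}$, and the twist $X^{[1]}$ — and pinning down the functor-of-points description of $X^{[1]}$ precisely enough that "$\bar x \circ r$" is literally a $B$-point of $X^{[1]}$.
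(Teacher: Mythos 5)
Your proof is correct; the route is a mild but genuine repackaging of the paper's argument. The paper defines \(\phi(x)\) as \(\Fr_{X/\kk}\) applied to an arbitrary lift \(y \in X(B)\) of \(x\) --- this is where the surjectivity hypothesis is used --- and then checks independence of the lift by noting that the difference \(y^\# - y'^\#\) of the induced maps on structure sheaves factors through \(\ker(B \to A)\) and is therefore annihilated by Frobenius. You exploit the same mechanism (Frobenius kills the kernel of a \(q\)-small extension) at the level of rings first: \(\Fr_B\) factors through the quotient as \(B \to A \to B\), \(\bar b \mapsto b^q\), and \(\phi\) is precomposition with \(r = \Spec(A \to B)\). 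The identities \(\iota \circ r = \Fr_{\Spec B}\) and \(r \circ \iota = \Fr_{\Spec A}\), which you verify correctly, then give the two triangles by naturality of the absolute Frobenius and the factorization of \(\Fr_X\) through \(\Fr_{X/\kk}\). What your construction buys is that \(\phi\) is manifestly canonical and exists without the surjectivity hypothesis, which is only needed to make the relation between \(\phi\) and honest lifts (the upper triangle) the useful statement it is in the application; what the paper's version buys is brevity. Your care in checking that \(\bar x \circ r\) really lands in \(X^{[1]}(B)\) --- i.e.\ that \(r\) twists the structure morphism by \(\Fr_\kk\) --- is exactly the right point to pin down, and it checks out.
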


\begin{proof}
Given \(x \in X(A)\), let \(\phi(x) \coloneqq y \circ \Fr_{X/\kk}\) for
any lift \(y \in X(B)\). This is well-defined because, if \(y'\) is another
lift, then the difference \(y^\# - y'^\#\) of the induced maps on structure
sheaves factors through \(\ker(B \to A)\) and is therefore annihilated by
Frobenius. The top triangle now commutes by construction, and the bottom
triangle because Frobenius commutes with ring homomorphisms.
\end{proof}

The following is the main result of this Section:

\begin{Theorem}\label{auts-smooth-parabolic}
Let \((V,\beta)\) be a \(q\)-bic form over a field \(\kk\) that admits a
descent \(\mathrm{P}'_{\smallbullet} V\) of its
\(\perp^{[\smallbullet]}\)-filtration to \(V\) over \(\kk\). Then the algebraic
group
\(\AutSch_{(V,\beta,\mathrm{P}'_{\smallbullet} V)}\) is reduced and smooth over
\(\kk\).
\end{Theorem}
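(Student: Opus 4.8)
The plan is to show that $G \coloneqq \AutSch_{(V,\beta,\mathrm{P}'_{\smallbullet} V)}$ is formally smooth over $\kk$; since $G$ is of finite type over a field this gives smoothness, and a smooth group scheme over a field is in particular reduced. By the infinitesimal criterion it suffices to lift points along every surjection $B \to A$ in $\mathrm{Art}_\kk$. Factoring such a surjection into small extensions reduces to the case $\mathfrak{m}_B I = 0$ for $I \coloneqq \ker(B \to A)$; then $I^2 = 0$, so $x^q = 0$ for every $x \in I$, and $B \to A$ is a $q$-small extension in the sense of \parref{aut-small-lift-along-frob}. Replacing a given $g_A \in G(A)$ by its product with a lift of its underlying $\kk$-point, we may assume $g_A$ reduces to $\id$, so the task is to deform the identity automorphism of $(V,\beta,\mathrm{P}'_{\smallbullet}V)$.

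First I would lift $g_A$ while forgetting $\beta$. The subgroup $P \subseteq \GL_V$ of linear automorphisms preserving both the $\perp$-filtration $\mathrm{P}_{\smallbullet} V$ and the descent $\mathrm{P}'_{\smallbullet} V$ of the $\perp^{[\smallbullet]}$-filtration is the intersection of two parabolic subgroups of $\GL_V$; a basis of $V$ adapted to both flags exhibits it as containing a maximal torus, so $P$ is smooth and connected. Since $G \subseteq P$ by \parref{auts-preserve-filtrations} and the definition of $G$, the point $g_A$ lifts to some $h \in P(B)$ (as $P$ is smooth). Its failure to preserve $\beta$ is the map $\eta \coloneqq h^{[1],\vee} \circ \beta_B \circ h - \beta_B \colon V_B \to V_B^{[1],\vee}$, which is $I$-valued because $h$ reduces to the $\beta_A$-preserving $g_A$. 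As $h$ preserves $\mathrm{P}'_1 V_B$, the twist $h^{[1]}$ preserves $\ker(\beta_B^\vee) = \mathrm{P}'_1 V_B^{[1]}$, so $h^{[1],\vee}$ preserves $\image(\beta_B)$, which is the annihilator of $\mathrm{P}'_1 V^{[1]}$ by \parref{forms-orthogonal-sequence}\ref{forms-orthogonal-sequence.everything}; hence $\eta$ takes values in $\image(\beta) \otimes_\kk I$.

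Next I would correct $h$ to $uh$ with $u = \id + \varphi$, where $\varphi \in \Hom_\kk(V,V) \otimes_\kk I$ preserves $\mathrm{P}'_{\smallbullet} V$. Here the $q$-small hypothesis is essential: since the entries of $\varphi$ lie in $I$ and $x^q = 0$ there, the Frobenius twist $u^{[1]}$ is the identity of $V_B^{[1]}$, so $(uh)^{[1],\vee} = h^{[1],\vee}$; expanding and using $\mathfrak{m}_B I = 0$, the requirement $uh \in G(B)$ collapses to the \emph{linear} equation $\beta \circ \varphi = -\eta$. Because $\eta$ lands in $\image(\beta)$ and $\beta$ induces an isomorphism $\bar\beta \colon V/\mathrm{P}_1 V \xrightarrow{\sim} \image(\beta)$, there is a unique $\bar\varphi \colon V \to V/\mathrm{P}_1 V$ with $\bar\beta \circ \bar\varphi = -\eta$; it then remains to lift $\bar\varphi$ along $V \twoheadrightarrow V/\mathrm{P}_1 V$ to a genuine $\varphi$ still preserving $\mathrm{P}'_{\smallbullet} V$, and to set $g_B \coloneqq uh$.

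The main obstacle is the interplay between the two filtrations, which is precisely what the preceding machinery was built for. One must show that $\bar\varphi$ is compatible with the filtration induced by $\mathrm{P}'_{\smallbullet} V$ on $V/\mathrm{P}_1 V$ — equivalently $\eta(\mathrm{P}'_i V) \subseteq \beta(\mathrm{P}'_i V)$ for all $i$ — and then that the lift of $\bar\varphi$ can be chosen to preserve $\mathrm{P}'_{\smallbullet} V$. The first point follows by feeding the seesaw relation of \parref{classification-propagate-subspace}, which matches images of $\beta$ on one filtration with annihilators of the other, through the formula for $\eta$, using that $h$ and $h^{[1]}$ preserve both filtrations; the second is where I would invoke \parref{auts-frobenius-seesaw} to transport the correction to the quotients $\bar V$ and $\bar V'$ on which $\bar\beta$ is a perfect pairing, together with the canonical Frobenius-twisted lift of \parref{aut-small-lift-along-frob} applied to the smooth group $\AutSch_{(\bar V', \mathrm{P}_{\smallbullet}\bar V', \mathrm{P}'_{\smallbullet}\bar V')}$. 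With these in hand, $G$ is formally smooth, hence smooth and reduced, and comparing the outcome with $\Lie\AutSch_{(V,\beta,\mathrm{P}'_{\smallbullet}V)}$ from \parref{forms-aut-tangent-space} pins down its dimension.
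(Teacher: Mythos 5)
Your proposal is correct, but it follows a genuinely different route from the paper's. You take the classical obstruction-theoretic path: lift \(g_A\) arbitrarily inside the smooth group of bifiltered linear automorphisms, measure the failure to preserve \(\beta_B\) by the defect \(\eta\), note that \(q\)-smallness makes the correction equation linear (since \(u^{[1]}=\id\) for \(u=\id+\varphi\) with \(I\)-valued \(\varphi\)), and then solve \(\beta\circ\varphi=-\eta\) compatibly with \(\mathrm{P}'_{\smallbullet}V\). The paper instead arranges the lift to preserve \(\beta_B\) on the nose: it applies the canonical Frobenius-twisted lift \parref{aut-small-lift-along-frob} to the automorphism \(\bar g'\) induced on \(\bar V'=V/\mathrm{P}_1'V\), transports the result through the seesaw isomorphism \(\bar\beta_*\) of \parref{auts-frobenius-seesaw} to prescribe the automorphism \(\bar h\) of \(\bar V=V/\mathrm{P}_1V\), and lifts the pair \((g,\bar h)\) through the smooth surjection onto bifiltered automorphisms of \(\bar V\); preservation of \(\beta_B\) is then automatic because \(\beta_B(h^{[1]}u,hv)\) depends only on \(\bar h'^{[1]}\) and \(\bar h\), which have been rigged to cancel. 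Both arguments turn on the same two inputs---\(q\)-smallness killing Frobenius twists of the kernel, and the seesaw compatibility of \(\beta\) with the two filtrations---so neither is more general; yours makes the obstruction explicit, while the paper's sidesteps computing it. The one step you compress, namely that \(\eta(\mathrm{P}_i'V)\subseteq\beta(\mathrm{P}_i'V)\otimes_\kk I\) so that \(\varphi\) can be chosen to preserve \(\mathrm{P}'_{\smallbullet}V\), does check out: by \parref{forms-orthogonal-sequence}\ref{forms-orthogonal-sequence.submodule} together with the descent of the filtration and \parref{forms-fr-twist-orthogonal}, the image \(\beta(\mathrm{P}_i'V)\) is exactly the annihilator of \(\mathrm{P}_{i+1}'V^{[1]}\), which \(h^{[1],\vee}\) preserves because \(h^{[1]}\) stabilizes \(\mathrm{P}_{i+1}'V_B^{[1]}\); a splitting of \(V\twoheadrightarrow V/\mathrm{P}_1V\) adapted to the chain \(\mathrm{P}'_{\smallbullet}V\) then yields the required filtered lift of \(\bar\varphi\).
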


\begin{proof}
It suffices to verify the infinitesimal lifting criterion, as in \citeSP{02HX},
at the identity. Explicitly, consider the functor of infinitesimal deformations
of the identity automorphism:
\[
G \colon
\mathrm{Art}_\kk \to \mathrm{Grps}
\qquad
A \mapsto
\Set{g \in \Aut(V_A,\beta_A,\mathrm{P}'_{\smallbullet} V_A) |
g \otimes_A A/\mathfrak{m}_A = \id_V}.
\]
Then it is enough to show that \(G(B) \to G(A)\) is surjective for every
\(q\)-small extension \(B \to A\).
Fix such an extension and fix an element \(g \in G(A)\). A lift
\(h \in G(B)\) will be constructed in several steps. The notation of
\parref{auts-filtered-maps} will be used throughout the proof.

\textbf{Step 1.} Forget \(\beta_A\) and view \(g\) as an element of the group
\(\Aut(V_A,\mathrm{P}_{\smallbullet} V_A, \mathrm{P}'_{\smallbullet} V_A)\).
In particular, \(g\) preserves the submodules
\(\mathrm{P}_1 V_A\) and \(\mathrm{P}_1' V_A\), so it induces automorphisms on
the quotients
\[
\bar{g} \in
\Aut(\bar{V}_A, \mathrm{P}_{\smallbullet} \bar{V}_A, \mathrm{P}'_{\smallbullet} \bar{V}_A)
\quad\text{and}\quad
\bar{g}'
\in \Aut(\bar{V}_A', \mathrm{P}_{\smallbullet} \bar{V}_A', \mathrm{P}'_{\smallbullet} \bar{V}_A').
\]
Apply \parref{aut-small-lift-along-frob} to the smooth algebraic group
\(X = \AutSch_{(\bar{V}',\mathrm{P}_{\smallbullet} \bar{V}', \mathrm{P}'_{\smallbullet} \bar{V}')}\)
of bifiltered linear automorphisms of \(\bar{V}'\)
to obtain a homomorphism
\[
\phi \colon
\Aut(\bar{V}'_A, \mathrm{P}_{\smallbullet} \bar{V}'_A, \mathrm{P}'_{\smallbullet} \bar{V}'_A) \to
\Aut(\bar{V}_B'^{[1]}, \mathrm{P}_{\smallbullet} \bar{V}_B'^{[1]}, \mathrm{P}'_{\smallbullet} \bar{V}_B'^{[1]})
\]
factoring the relative Frobenius homomorphism of \(X\) on \(B\)-points. Identify
the target of \(\phi\) with the group of bifiltered automorphisms of \(\bar{V}_B\)
via \parref{auts-frobenius-seesaw}. This yields an element
\[
\bar{h} \coloneqq \bar{\beta}_{B,*}(\phi(\bar{g}'))
\in
\Aut(\bar{V}_B,\mathrm{P}_{\smallbullet} \bar{V}_B, \mathrm{P}'_{\smallbullet} \bar{V}_B).
\]
By commutativity of the diagram in \parref{aut-small-lift-along-frob},
the definition of the isomorphism \(\bar{\beta}_*\) from \parref{auts-frobenius-seesaw},
and the fact that \(g\) preserves \(\beta_A\), it follows that \(\bar{h}\)
reduces along \(B \to A\) to
\[
\bar{\beta}_{A,*}(\bar{g}'^{[1]}) =
\bar{g} \in
\Aut(\bar{V}_A, \mathrm{P}_{\smallbullet} \bar{V}_A, \mathrm{P}' \bar{V}_A).
\]

\textbf{Step 2.} The quotient map \(V \to \bar{V}\) induces a surjection of
linear algebraic groups
\[
\AutSch_{(V,\mathrm{P}_{\smallbullet} V, \mathrm{P}_{\smallbullet}' V)} \to
\AutSch_{(\bar{V}, \mathrm{P}_{\smallbullet} \bar{V}, \mathrm{P}_{\smallbullet}' \bar{V})}
\]
whose kernel is isomorphic to the group of linear maps \(V \to \mathrm{P}_1 V\)
which preserve the filtrations induced by \(\mathrm{P}_{\smallbullet}' V\) and
restrict to an isomorphism on \(\mathrm{P}_1 V \subseteq V\). The surjection
is therefore smooth and, in terms of deformation theory, this means
that the canonically induced map of sets
\[
\Aut(V_B, \mathrm{P}_{\smallbullet} V_B, \mathrm{P}_{\smallbullet}' V_B) \to
\Aut(V_A, \mathrm{P}_{\smallbullet} V_A, \mathrm{P}_{\smallbullet}' V_A)
\times_{\Aut(\bar{V}_A, \mathrm{P}_{\smallbullet} \bar{V}_A, \mathrm{P}'_{\smallbullet} \bar{V}_A)}
\Aut(\bar{V}_B, \mathrm{P}_{\smallbullet} \bar{V}_B, \mathrm{P}'_{\smallbullet} \bar{V}_B)
\]
is surjective. See, for example, \citeSP{06HJ}. Step 1 produced an element
\((g \mapsto \bar{g} \mapsfrom \bar{h})\) of the fibre product on the right;
let \(h \in \Aut(V_B,\mathrm{P}_{\smallbullet} V_B, \mathrm{P}_{\smallbullet}' V_B)\)
be any lift along this surjection.

\textbf{Step 3.} It remains to see that this element \(h\) preserves the
\(q\)-bic form \(\beta_B\). Since \(h\) is a lift of \(g\), and since reduction
along \(B \to A\) commute with the quotient morphisms induced by
\(V \to \bar{V}'\), the image \(\bar{h}'\) of \(h\) in
\(\Aut(\bar{V}_B', \mathrm{P}_{\smallbullet} \bar{V}_B', \mathrm{P}_{\smallbullet}' \bar{V}_B')\)
is a lift of the element \(\bar{g}'\) from Step 1. Therefore the diagram of
\parref{aut-small-lift-along-frob} gives
\[
\bar{h}'^{[1]} = \phi(\bar{g}')
\in \Aut(\bar{V}_B'^{[1]}, \mathrm{P}_{\smallbullet} \bar{V}_B'^{[1]}, \mathrm{P}_{\smallbullet}' \bar{V}_B'^{[1]}).
\]
By the definitions of the perfect pairing \(\bar{\beta}_B\), of \(h\), and
of the isomorphism \(\bar{\beta}_{B,*}\) from \parref{auts-frobenius-seesaw},
it follows that, for any \(u \in V_B^{[1]}\) and \(v \in V_B\),
\[
\beta_B(h^{[1]} \cdot u, h \cdot v)
= \bar{\beta}_B(\bar{h}'^{[1]} \cdot \bar{u}, \bar{h} \cdot \bar{v})
= \bar{\beta}_B(\phi(\bar{g}')^{-1} \cdot \phi(\bar{g}') \cdot \bar{u},\bar{v})
= \bar{\beta}_B(\bar{u}, \bar{v})
= \beta_B(u,v),
\]
where \(\bar{u} \in \bar{V}_B'^{[1]}\) and \(\bar{v} \in \bar{V}_B\) are the
images of \(u\) and \(v\), respectively. Therefore \(h\) preserves \(\beta_B\).
Together with its properties from Step 2, this shows that \(h\) lies in
\(G(B)\) and is a lift of \(g \in G(A)\).
\end{proof}

As a first consequence, this gives a modular interpretation to the
reduced subscheme of \(\AutSch_{(V,\beta)}\):

\begin{Corollary}\label{auts-identify-reduced}
If the \(\perp^{[\smallbullet]}\)-filtration descends to \(V\) over \(\kk\), then
\(\AutSch_{(V,\beta),\mathrm{red}}
= \AutSch_{(V,\beta,\mathrm{P}_{\smallbullet}' V)}\).
\end{Corollary}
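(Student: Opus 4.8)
The plan is to establish the asserted equality as an identity of closed subschemes of \(\GL_V\), by proving the two inclusions separately and then using that closed subschemes of a fixed scheme form a poset.

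For the inclusion \(\AutSch_{(V,\beta),\mathrm{red}} \subseteq \AutSch_{(V,\beta,\mathrm{P}'_{\smallbullet} V)}\), I would invoke \parref{aut-reduced-preserves-perp'} directly: since \(V\) is a vector space over a field, every subspace---in particular every piece \(\mathrm{P}_i' V\)---is a direct summand, so the local-direct-summand hypothesis of that lemma holds automatically, while the hypothesis that the \(\perp^{[\smallbullet]}\)-filtration descends to \(V\) is precisely the standing assumption. For the reverse inclusion, recall from \parref{auts-preserve-descent} that \(\AutSch_{(V,\beta,\mathrm{P}'_{\smallbullet} V)}\) is by construction a closed subgroup scheme of \(\AutSch_{(V,\beta)}\), and that by \parref{auts-smooth-parabolic} it is reduced. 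A morphism from a reduced scheme to a scheme \(X\) factors uniquely through \(X_{\mathrm{red}}\); applying this to the closed immersion \(\AutSch_{(V,\beta,\mathrm{P}'_{\smallbullet} V)} \hookrightarrow \AutSch_{(V,\beta)}\), and noting that a closed immersion factoring through a monomorphism is itself a closed immersion, exhibits \(\AutSch_{(V,\beta,\mathrm{P}'_{\smallbullet} V)}\) as a closed subscheme of \(\AutSch_{(V,\beta),\mathrm{red}}\). Combining the two inclusions yields the equality.

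There is essentially no obstacle here beyond assembling the two main results of the Section; the only points worth a sentence are the automatic verification over a field of the direct-summand hypotheses in \parref{aut-reduced-preserves-perp'}, and the elementary scheme-theoretic facts (factorization of reduced schemes through reductions, and stability of closed immersions under factoring through monomorphisms) that make the final poset argument rigorous.
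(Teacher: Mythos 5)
Your proposal is correct and follows the paper's own proof exactly: the inclusion \(\subseteq\) is \parref{aut-reduced-preserves-perp'} (whose hypotheses are indeed automatic over a field), and \(\supseteq\) is the factorization of the reduced closed subscheme \(\AutSch_{(V,\beta,\mathrm{P}'_{\smallbullet} V)}\) through \(\AutSch_{(V,\beta),\mathrm{red}}\), using \parref{auts-smooth-parabolic}. No gaps.
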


\begin{proof}
That ``\(\subseteq\)'' is \parref{aut-reduced-preserves-perp'}.
That ``\(\supseteq\)'' follows from \parref{auts-smooth-parabolic}: since
\(\AutSch_{(V,\beta,\mathrm{P}_{\smallbullet}' V)}\) is reduced, its closed
immersion into \(\AutSch_{(V,\beta)}\) factors through the reduced subscheme.
\end{proof}

Second, the following shows that the exponent of the nilpotent members in the
automorphism group scheme is determined by the number of Frobenius twists
required before the \(\perp^{[\smallbullet]}\)-filtration is canonically
defined; compare with \parref{filtrations-nu}:

\begin{Corollary}\label{aut-kill-infinitesimals}
Let \(\nu\) be such that every piece of the
\(\perp^{[\smallbullet]}\)-filtration of \((V,\beta)\) can be defined on
\(V^{[\nu]}\) in terms of \(\beta^{[\nu]}\). Then the quotient of
\(\AutSch_{(V,\beta)}\) by its \(q^\nu\)-power Frobenius kernel is reduced.
\end{Corollary}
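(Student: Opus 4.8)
The plan is to realize the quotient as the scheme-theoretic image of a relative Frobenius and to show that this image lies inside a reduced subgroup. Write \(G \coloneqq \AutSch_{(V,\beta)}\). Since the formation of automorphism group schemes commutes with base change, and the \(\nu\)-th Frobenius twist is base change along \(\Fr^\nu\colon\kk\to\kk\), there is a canonical identification \(G^{[\nu]} \cong \AutSch_{(V^{[\nu]},\beta^{[\nu]})}\) under which the \(q^\nu\)-power relative Frobenius \(F \coloneqq \Fr^{[\nu]}_{G/\kk}\colon G\to G^{[\nu]}\) carries an \(S\)-point \(g\) to its twist \(g^{[\nu]}\). The \(q^\nu\)-power Frobenius kernel of \(G\) is \(\ker F\); as the image of a homomorphism of affine algebraic groups over a field is a closed subgroup, \(F\) factors as \(G\twoheadrightarrow G/\ker F\hookrightarrow G^{[\nu]}\) with the second arrow a closed immersion, so \(G/\ker F\) is the scheme-theoretic image of \(F\). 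The task is to show this image is reduced.

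By the hypothesis on \(\nu\), the \(\perp^{[\smallbullet]}\)-filtration of \((V^{[\nu]},\beta^{[\nu]})\) is defined on \(V^{[\nu]}\) over \(\kk\), so \parref{auts-identify-reduced} gives \((G^{[\nu]})_{\mathrm{red}} = \AutSch_{(V^{[\nu]},\beta^{[\nu]},\mathrm P'_{\smallbullet} V^{[\nu]})}\), and this group is reduced and smooth by \parref{auts-smooth-parabolic}. I would therefore aim to show that \(F\) factors through \((G^{[\nu]})_{\mathrm{red}}\). Over a perfect field this suffices — and is in fact equivalent to reducedness of the image: the ring map underlying \(F\) is \(\kk\otimes_{\Fr^\nu,\kk}\sO(G)\to\sO(G)\), \(a\otimes f\mapsto af^{q^\nu}\), whose image, since \(\kk\) is perfect, is the subring of \(q^\nu\)-th powers of \(\sO(G)\), and this is reduced exactly when \(\Fr^\nu\) kills the nilradical of \(\sO(G)\), i.e.\ when \(F\) factors through \((G^{[\nu]})_{\mathrm{red}}\). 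The general case then follows by faithfully flat descent of reducedness along \(\kk\to\kk^{\mathrm{perf}}\), after checking that the hypothesis on \(\nu\) and all the constructions involved survive this extension.

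To factor \(F\) through \(\AutSch_{(V^{[\nu]},\beta^{[\nu]},\mathrm P'_{\smallbullet} V^{[\nu]})}\), a closed immersion, I would argue on points: for every \(\kk\)-algebra \(S\) and \(g\in\Aut(V_S,\beta_S)\), I must show the twist \(g^{[\nu]}\) preserves each piece \(\mathrm P'_i V_S^{[\nu]}\). Applying \parref{auts-preserve-filtrations} to the \(q\)-bic form \((V^{[\nu]},\beta^{[\nu]})\) and the automorphism \(g^{[\nu]}\), the latter preserves \(\beta^{[\nu]}\), the \(\perp\)-filtration \(\mathrm P_{\smallbullet} V_S^{[\nu]}\), and, after the appropriate further twist, every intrinsic piece of the \(\perp^{[\smallbullet]}\)-filtration. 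For \(i\le\nu\) the descended piece \(\mathrm P'_i V^{[\nu]}\) is just the \((\nu-i)\)-fold Frobenius twist of the intrinsic \(\mathrm P'_i V^{[i]}\), so \(g^{[\nu]} = (g^{[i]})^{[\nu-i]}\) preserves it. For \(i>\nu\) I would revisit the construction in the proof of \parref{forms-frobenius-descent-index}: there the descent to \(V^{[\nu]}\) of a ``high'' piece is produced explicitly as a combination — using sums, intersections, and \(\beta^{[\nu]}\)-orthogonals — of the \(\perp\)-filtration and the pieces of the \(\perp^{[\smallbullet]}\)-filtration of index at most \(\nu\), each of which \(g^{[\nu]}\) already stabilizes; since all of these operations are transported by isomorphisms, \(g^{[\nu]}\) stabilizes \(\mathrm P'_i V_S^{[\nu]}\) as well.

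The main obstacle is exactly this last step — turning the case analysis of \parref{forms-frobenius-descent-index} into the clean assertion that, at level \(\nu\), the descended filtration is assembled from \(\beta^{[\nu]}\)-intrinsic data of index \(\le\nu\), and is therefore automorphism-stable. This is where the precise hypothesis on \(\nu\) does its work: a Frobenius descent of a high piece that is \emph{not} so expressible need not be preserved by infinitesimal automorphisms, and it is precisely this failure that accounts for the nonreduced structure of \(\AutSch_{(V,\beta)}\) seen in \parref{auts-examples-standard} and \parref{auts-examples-1a+N2b}. A lesser technical point is justifying the reduction to perfect fields used in the second paragraph.
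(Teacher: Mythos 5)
Your proposal is correct and takes essentially the paper's route: both arguments identify the quotient with the image of the \(q^\nu\)-power relative Frobenius, factor that Frobenius through \(\AutSch_{(V^{[\nu]},\beta^{[\nu]},\mathrm{P}'_{\smallbullet}V^{[\nu]})}\) using \parref{auts-preserve-filtrations} together with the hypothesis that the descended pieces are intrinsic to \(\beta^{[\nu]}\), and then conclude from the smoothness of that group established in \parref{auts-smooth-parabolic}. Your perfect-field ring computation plus descent for the step ``factors through a reduced subgroup implies the image is reduced'' is a valid substitute for the paper's implicit shortcut (the scheme-theoretic image of Frobenius has full support in the reduced subgroup, hence coincides with it), and the \(i>\nu\) case you single out as the main obstacle is exactly what the hypothesis on \(\nu\) hands you, so no re-derivation from \parref{forms-frobenius-descent-index} is required.
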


\begin{proof}
Comparing \parref{auts-preserve-filtrations} and
\parref{auts-preserve-descent}, the hypothesis implies that the \(q^\nu\)-power
Frobenius of \(\AutSch_{(V,\beta)}\) factors through
\(\AutSch_{(V^{[\nu]}, \beta^{[\nu]}, \mathrm{P}'_{\smallbullet} V^{[\nu]})}\).
Since the latter is reduced by \parref{auts-smooth-parabolic}, so is the image
of the former.
\end{proof}

Finally, the main application is to compute dimensions of automorphism groups
of \(q\)-bic forms in terms of the numerical invariants defined in
\parref{forms-numerical-invariants-type}:

\begin{Theorem}\label{aut-dimension}
Let \((V,\beta)\) be a \(q\)-bic form over a field \(\kk\) of type
\((a;b_m)_{m \geq 1}\). Then
\[
\dim\AutSch_{(V,\beta)} =
\sum\nolimits_{k \geq 1}\Big[ k(b_{2k-1}^2 + b_{2k}^2) +
\Big(a + \sum\nolimits_{m \geq 2k} m b_m\Big) b_{2k-1} +
2k\Big(\sum\nolimits_{m \geq 2k+1} b_m\Big) b_{2k}\Big].
\]
\end{Theorem}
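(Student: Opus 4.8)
The plan is to use the structural results already established to reduce this to a Lie-algebra computation, and then to a finite count against the \(\perp^{[\smallbullet]}\)-filtration. Since the dimension of a finite-type \(\kk\)-scheme is unchanged under extension of \(\kk\), I may assume \(\kk\) algebraically closed; then \(\nu(V,\beta) = 0\) by \parref{filtrations-nu}, so the \(\perp^{[\smallbullet]}\)-filtration descends to \(V\) over \(\kk\), and by \parref{forms-classification-theorem-kbar} I may take \(\beta\) to be the standard form of type \((a;b_m)_{m \geq 1}\). Then \parref{auts-identify-reduced} gives \(\AutSch_{(V,\beta),\mathrm{red}} = \AutSch_{(V,\beta,\mathrm{P}'_{\smallbullet} V)}\), which is smooth over \(\kk\) by \parref{auts-smooth-parabolic}; hence, using \parref{forms-aut-tangent-space} for the Lie algebra,
\[
\dim\AutSch_{(V,\beta)} =
\dim_\kk\Lie\AutSch_{(V,\beta,\mathrm{P}'_{\smallbullet} V)} =
\dim_\kk\Set{\varphi \in \Hom_\kk(V,\mathrm{P}_1 V) | \varphi(\mathrm{P}'_i V) \subseteq \mathrm{P}'_i V\;\text{for all}\; i \geq 0},
\]
where the condition \(\varphi(\mathrm{P}'_i V) \subseteq \mathrm{P}_1 V \cap \mathrm{P}'_i V\) of \parref{forms-aut-tangent-space} has simplified since \(\varphi\) already has image in \(\mathrm{P}_1 V\). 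So it remains to compute the dimension of this space of filtration-preserving maps.

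For this, I would choose a basis of \(V\) adapted to the finite flag obtained by listing, in increasing order and without repetition, the terms of
\[
\{0\} \subseteq \mathrm{P}'_1 V \subseteq \mathrm{P}'_3 V \subseteq \cdots \subseteq \mathrm{P}'_- V \subseteq \mathrm{P}'_+ V \subseteq \cdots \subseteq \mathrm{P}'_4 V \subseteq \mathrm{P}'_2 V \subseteq V.
\]
A linear map \(\varphi \colon V \to \mathrm{P}_1 V\) preserves every term of this flag precisely when it carries each basis vector into \(\mathrm{P}_1 V\) intersected with the smallest flag-term containing that vector; so the space in question has dimension equal to the sum over the jumps \(W \subseteq W'\) of the flag of \(\dim_\kk(W'/W) \cdot \dim_\kk(\mathrm{P}_1 V \cap W')\). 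Each jump is an odd jump \(\mathrm{P}'_{2k-3} V \subseteq \mathrm{P}'_{2k-1} V\), of graded dimension \(a_{2k-1}(V,\beta)\); the middle jump \(\mathrm{P}'_- V \subseteq \mathrm{P}'_+ V\), of graded dimension \(a(V,\beta)\); or an even jump \(\mathrm{P}'_{2k} V \subseteq \mathrm{P}'_{2k-2} V\), of graded dimension \(a_{2k}(V,\beta)\) — here using \(\dim_\kk \mathrm{P}'_i V = \dim_\kk \mathrm{P}_i V\), which follows from \parref{forms-intersect-filtrations} (or from \parref{standard-forms-filtration-example}), to read off these graded dimensions from \parref{forms-numerical-invariants}. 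The remaining ingredients are the intersection dimensions
\[
\dim_\kk(\mathrm{P}_1 V \cap \mathrm{P}'_{2k-1} V) = \sum\nolimits_{\ell = 1}^k b_{2\ell-1},
\qquad
\dim_\kk(\mathrm{P}_1 V \cap \mathrm{P}'_{2k} V) = \sum\nolimits_{\ell = 1}^k b_{2\ell-1} + \sum\nolimits_{m \geq 2k+1} b_m,
\]
and \(\dim_\kk(\mathrm{P}_1 V \cap \mathrm{P}'_+ V) = \sum_{k \geq 1} b_{2k-1}\), all from \parref{forms-numerical-invariants-type} and the symmetry \parref{forms-intersect-filtrations} (the last by passing to the limit over the even part of the flag).

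Assembling these, the odd jumps contribute \(\sum_{k \geq 1} a_{2k-1} \sum_{\ell = 1}^k b_{2\ell-1}\), the middle jump contributes \(a \sum_{k \geq 1} b_{2k-1}\) — which accounts for the summands \(a\, b_{2k-1}\) in the statement — and the even jumps contribute \(\sum_{k \geq 1} a_{2k}\big(\sum_{\ell = 1}^{k-1} b_{2\ell-1} + \sum_{m \geq 2k-1} b_m\big)\). I expect the only real obstacle to be the remaining bookkeeping: expanding each \(a_m = \sum_{i \geq m} b_i\) and regrouping these three sums — while carefully tracking the parities that govern which filtration pieces pair up — into the term-by-term form asserted in the statement. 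This is a purely formal identity in \(a\) and the \(b_m\), which can be verified directly; the low-dimensional cases \parref{auts-examples-standard} and \parref{auts-examples-1a+N2b}, and the dimension table in Figure \parref{moduli-five-dim-figure}, serve as consistency checks.
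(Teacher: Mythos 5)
Your proposal is correct and follows essentially the same route as the paper: reduce (there by Frobenius-twisting via \parref{forms-frobenius-descent-index}, here by passing to \(\bar{\kk}\)) to the case where the \(\perp^{[\smallbullet]}\)-filtration descends, identify \(\dim\AutSch_{(V,\beta)}\) with \(\dim_\kk\Lie\AutSch_{(V,\beta,\mathrm{P}'_{\smallbullet}V)}\) via \parref{auts-identify-reduced} and \parref{auts-smooth-parabolic}, and decompose that Lie algebra over the jumps of the \(\mathrm{P}'_{\smallbullet}V\)-flag, so that your three contributions (odd, middle, even) coincide term for term with the three parenthesized summands in the paper's proof. The only piece you defer --- regrouping \(\sum_k a_{2k-1}\big(\sum_{\ell=1}^{k}b_{2\ell-1}\big) + a\sum_k b_{2k-1} + \sum_k a_{2k}(\cdots)\) into the stated closed form --- is indeed a purely formal identity in \(a\) and the \(b_m\) (the paper devotes the second half of its proof to exactly this bookkeeping), so nothing essential is missing.
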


\begin{proof}
If \(\beta\) is nondegenerate, then \(\dim\AutSch_{(V,\beta)} = 0\) follows
from the Lie algebra computation \parref{forms-aut-tangent-space}. Otherwise,
replace \((V,\beta)\) by a sufficiently large Frobenius twist to assume that
the \(\perp^{[\smallbullet]}\)-filtration descends over \(\kk\); this is
possible by \parref{forms-frobenius-descent-index}, and allowable since the
automorphism group scheme of the twist of \((V,\beta)\) is the twist of the
automorphism group scheme \((V,\beta)\) and are therefore of the same
dimension. Since dimension is insensitive to nilpotents,
\parref{auts-identify-reduced} gives
\[
\dim\AutSch_{(V,\beta)} =
\dim\AutSch_{(V,\beta), \mathrm{red}} =
\dim\AutSch_{(V,\beta,\mathrm{P}'_{\smallbullet} V)}.
\]
Since \(\AutSch_{(V,\beta,\mathrm{P}'_{\smallbullet} V)}\) is smooth by
\parref{auts-smooth-parabolic}, its dimension is that of its Lie algebra.
With a choice of identification between \(V\) and its associated graded
for the filtration \(\mathrm{P}'_{\smallbullet} V\),
\parref{forms-aut-tangent-space} gives
\begin{multline*}
\Lie\AutSch_{(V,\beta,\mathrm{P}'_{\smallbullet} V)} \cong
\Big(
  \bigoplus\nolimits_{\ell \geq 1}
  \Hom_\kk(
    \mathrm{P}'_{2\ell-1} V/\mathrm{P}'_{2\ell-3} V,
    \mathrm{P}_1 V \cap \mathrm{P}'_{2\ell-1} V)
\Big) \\
\oplus
\Big(
  \Hom_\kk(
    \mathrm{P}'_+ V/\mathrm{P}'_- V,
    \mathrm{P}_1 V \cap \mathrm{P}'_+ V)
\Big)
\oplus
\Big(
  \bigoplus\nolimits_{\ell \geq 1}
  \Hom_\kk(
    \mathrm{P}'_{2\ell-2} V/\mathrm{P}'_{2\ell} V,
    \mathrm{P}_1 V \cap \mathrm{P}'_{2\ell-2} V)
\Big)
\end{multline*}
where \(\mathrm{P}_-' V\) and \(\mathrm{P}_+' V\) are, analogous to their
unprimed counterparts from \parref{forms-canonical-filtration}, the limits of
the increasing odd and decreasing even pieces of
\(\mathrm{P}_{\smallbullet}'V\), respectively. Express the dimension of each
parenthesized term in terms of \(a\) and the \(b_i\) using the formulae from
\parref{forms-numerical-invariants} and
\parref{forms-numerical-invariants-type}, and the symmetry relation
\parref{forms-intersect-filtrations}: The easiest is the central summand, with
dimension
\[
a \dim_\kk \mathrm{P}_1 V \cap \mathrm{P}_+' V =
a \sum\nolimits_{k \geq 1} b_{2k-1}.
\]
Next, the first parenthesized term has dimension
\begin{multline*}
\sum\nolimits_{\ell \geq 1} a_{2\ell-1} \dim_\kk \mathrm{P}_1 V \cap \mathrm{P}_{2\ell-1}' V
=
\sum\nolimits_{\ell \geq 1}
\Big(\sum\nolimits_{m \geq 2\ell-1} b_m\Big)
\Big(\sum\nolimits_{k = 1}^\ell b_{2k-1}\Big) \\
=
\sum\nolimits_{k \geq 1}
  \Big(\sum\nolimits_{\ell \geq k}\sum\nolimits_{m \geq 2\ell - 1} b_m\Big)
  b_{2k-1}
=
\sum\nolimits_{k \geq 1}
  \Big(\sum\nolimits_{\ell \geq k} (\ell - k +1) (b_{2\ell-1} + b_{2\ell})\Big) b_{2k-1}.
\end{multline*}
Finally, the third parenthesized term has dimension
\begin{multline*}
\sum\nolimits_{\ell \geq 1} a_{2\ell} \dim_\kk \mathrm{P}_1 V \cap \mathrm{P}_{2\ell-2}' V
=
\sum\nolimits_{\ell \geq 1}
  \Big(\sum\nolimits_{m \geq 2\ell} b_m\Big)
  \Big(\sum\nolimits_{k = 1}^\ell b_{2k - 1} + \sum\nolimits_{m \geq 2\ell} b_m\Big) \\
=
\sum\nolimits_{k \geq 1}
  \Big(\sum\nolimits_{\ell \geq k} (\ell - k + 1)(b_{2\ell} + b_{2\ell+1})\Big) b_{2k-1} +
\sum\nolimits_{\ell \geq 1}
  \Big(\sum\nolimits_{m \geq 2\ell} b_m\Big)^2.
\end{multline*}
The final sum of squares term here may be written as
\[
\sum\nolimits_{k \geq 1} k\Big(
\Big(b_{2k} + 2\sum\nolimits_{m \geq 2k+1} b_m\Big) b_{2k} +
\Big(b_{2k+1} + 2\sum\nolimits_{m \geq 2k+2} b_m\Big) b_{2k+1}\Big).
\]
Adding the expressions together gives the claimed formula.
\end{proof}

It is interesting and useful to disentangle this formula to see how dimensions
of automorphism groups grow under sums of \(q\)-bic forms:

\begin{Corollary}\label{moduli-dimension-summation}
Let \((V,\beta)\) and \((W,\gamma)\) be \(q\)-bic forms over \(\kk\). Then
\[
\dim\AutSch_{(V \oplus W, \beta \oplus \gamma)} =
\dim\AutSch_{(V,\beta)} +
\dim\AutSch_{(W,\gamma)} +
\Big(\sum\nolimits_{k \geq 1} b_{2k-1}\Big) c +
\sum\nolimits_{m \geq 1} \Phi_m(\beta) d_m
\]
where \(\beta\) and \(\gamma\) are of types
\((a; b_m)_{m \geq 1}\) and \((c; d_m)_{m \geq 1}\), respectively, and
\[
\Phi_m(\beta) \coloneqq
\begin{dcases*}
\dim_\kk V + b_{2k-1} + 2\sum\nolimits_{\ell = 1}^{k-1} (k-\ell) b_{2\ell-1}
& if \(m = 2k - 1\), and \\
\sum\nolimits_{\ell = 1}^{k-1} 2\ell b_{2\ell} + 2k\Big( \sum\nolimits_{\ell \geq 1} b_{2\ell-1} + \sum\nolimits_{\ell \geq k} b_{2\ell}\Big)
& if \(m = 2k\).
\end{dcases*}
\]
\end{Corollary}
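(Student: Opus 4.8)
The plan is to substitute the type of $\beta\oplus\gamma$ into the closed formula of \parref{aut-dimension} and to collect terms by bidegree in the data of $\beta$ and of $\gamma$. Since the $\perp$- and $\perp^{[\smallbullet]}$-filtrations of an orthogonal direct sum are the termwise direct sums of the filtrations of the summands by \parref{forms-filtrations-sums}, the definitions in \parref{forms-numerical-invariants} show that $\beta\oplus\gamma$ has type $(a+c;\,b_m+d_m)_{m\geq 1}$. Feeding this into \parref{aut-dimension} and expanding every square $(b_i+d_i)^2$ and every product of linear forms, the resulting expression splits into three groups: the summands involving only $a$ and the $b_m$, which by \parref{aut-dimension} recombine into $\dim\AutSch_{(V,\beta)}$; the summands involving only $c$ and the $d_m$, which recombine into $\dim\AutSch_{(W,\gamma)}$; and a bilinear remainder $C$, each of whose summands pairs one of $\{a,b_1,b_2,\ldots\}$ with one of $\{c,d_1,d_2,\ldots\}$. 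It then remains to identify $C$ with $\bigl(\sum_{k\geq 1}b_{2k-1}\bigr)c+\sum_{m\geq 1}\Phi_m(\beta)d_m$.

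Reading off the cross terms, $C$ is the sum over $k\geq 1$ of the quantities $2k\bigl(b_{2k-1}d_{2k-1}+b_{2k}d_{2k}\bigr)$, $a\,d_{2k-1}+c\,b_{2k-1}$, $\bigl(\sum_{i\geq 2k}i\,b_i\bigr)d_{2k-1}+\bigl(\sum_{i\geq 2k}i\,d_i\bigr)b_{2k-1}$, and $2k\bigl(\sum_{i\geq 2k+1}b_i\bigr)d_{2k}+2k\bigl(\sum_{i\geq 2k+1}d_i\bigr)b_{2k}$; in particular $C$ is term-by-term symmetric under interchanging the roles of $\beta$ and $\gamma$. The coefficient of $c$ in $C$ is visibly $\sum_{k\geq 1}b_{2k-1}$. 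To read off the coefficient of a fixed $d_m$, I would treat $m=2j-1$ and $m=2j$ separately: in each case gather the finitely many summands of $C$ in which $d_m$ occurs, paying attention to which of the inequalities $2k\leq m$ and $2k+1\leq m$ are satisfied, and then interchange the order of the resulting double sums. For $m=2j-1$ one encounters the combination $a+\sum_{i\geq 2j}i\,b_i$, which I would rewrite as $\dim_\kk V-\sum_{i=1}^{2j-1}i\,b_i$ using the formula $\dim_\kk V=a+\sum_{i\geq 1}i\,b_i$ of \parref{forms-numerical-invariants-type}; a routine reindexing then identifies the coefficient with $\Phi_{2j-1}(\beta)$. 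The case $m=2j$ has no $a$-contribution and, after the analogous reindexing, yields $\Phi_{2j}(\beta)$.

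The only step of real substance is this final reindexing, which is purely a bookkeeping exercise in manipulating nested sums; all the content is already in \parref{aut-dimension}. The point requiring care is the treatment of boundary indices: for instance, in computing the coefficient of $d_{2j}$ the constraint $2k\leq 2j$ permits $k=j$ whereas $2k+1\leq 2j$ does not, and it is precisely the correct handling of such endpoints that produces the differing shapes of $\Phi_m$ for $m$ even and $m$ odd. Once each coefficient has been matched and the two pure contributions reinstated, the claimed identity follows; the symmetry of $C$ then serves as a consistency check, since it forces the same total to also equal $\dim\AutSch_{(V,\beta)}+\dim\AutSch_{(W,\gamma)}+\bigl(\sum_{k\geq 1}d_{2k-1}\bigr)a+\sum_{m\geq 1}\Phi_m(\gamma)b_m$.
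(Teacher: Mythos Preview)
Your proposal is correct and follows essentially the same approach as the paper: the paper's proof is the one-line assertion that the identity follows directly from \parref{aut-dimension} together with the formulae of \parref{forms-numerical-invariants-type}, and your write-up simply unpacks that computation in detail. The additional observation about the symmetry of the cross term $C$ is a nice consistency check but is not needed for the argument.
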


\begin{proof}
This follows directly from \parref{aut-dimension} together with
the formulae of \parref{forms-numerical-invariants-type}.
\end{proof}

\section{Moduli}\label{section-moduli}
The parameter space for \(q\)-bic forms on a fixed \(n\)-dimensional vector
space \(V\) over the field \(\kk\) is given by the \(n^2\)-dimensional affine
space
\[
\qbics_V
\coloneqq \mathbf{A}(V^{[1]} \otimes_\kk V)^\vee
\coloneqq \Spec\Sym^*(V^{[1]} \otimes_\kk V).
\]
Multiplication in the symmetric algebra induces the universal \(q\)-bic form
\[
\beta_{\mathrm{univ}} \colon
V^{[1]} \otimes_\kk V \otimes_\kk \sO_{\qbics_V}
\to \sO_{\qbics_V}.
\]
In particular, \(\qbics_V\) represents the functor
\(\mathrm{Sch}_\kk^{\mathrm{opp}} \to \mathrm{Set}\) that sends
\[
X \mapsto \Set{\beta \colon V^{[1]} \otimes_\kk V \otimes_\kk \sO_X \to \sO_X
\;\text{a \(q\)-bic form over \(\sO_X\)}}
\]
a \(\kk\)-scheme \(X\) to the set of \(q\)-bic forms on \(V\) over \(\sO_X\). The
linear action of \(\GL_V\) on \(V^{[1]} \otimes_\kk V\) induces a schematic
action of the algebraic group \(\GL_V\) on \(\qbics_V\). By the Classification
Theorem \parref{forms-classification-theorem}, the orbits of this action consist
of the finitely many locally closed subschemes
\[
\qbics_{V,\mathbf{b}}
\coloneqq \Set{[\beta] \in \qbics_V | \operatorname{type}(\beta) = \mathbf{b}}
\]
parameterizing \(q\)-bic forms with a given type \(\mathbf{b} = (a; b_m)_{m \geq 1}\),
where \(a + \sum_{m \geq 1} m b_m = n\) as in \parref{forms-numerical-invariants-type}.
Together, these form the \emph{type stratification} of the affine space
\(\qbics_V\). This refines the natural stratification by corank.

Let \(\AutSch_{(V,\mathbf{b})}\) denote the automorphism group scheme of the
standard form of type \(\mathbf{b}\), as in \parref{forms-standard};
note that the automorphism group scheme of any other form of type
\(\mathbf{b}\) over \(\kk\) will be a form of \(\AutSch_{(V,\mathbf{b})}\) over \(\kk\).
Since the type strata are orbits under an algebraic group, they enjoy the
following standard properties, see, for example, \cite[Propositions 1.65, 1.66,
and 7.12]{Milne:AlgGroups}:

\begin{Proposition}\label{forms-aut-strata-dimension}
Each stratum \(\qbics_{V,\mathbf{b}}\) is a smooth, irreducible, locally closed
subscheme of \(\qbics_V\) of codimension \(\dim\AutSch_{(V,\mathbf{b})}\), and its
closure is a union of type stratum of strictly smaller dimension.
\qed
\end{Proposition}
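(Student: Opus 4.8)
The plan is to identify each type stratum $\qbics_{V,\mathbf{b}}$ with the $\GL_V$-orbit of a distinguished $\kk$-rational point and then invoke the standard theory of orbits of affine algebraic group actions, exactly as the cited results of \cite{Milne:AlgGroups} are designed for. First I would recall that a $\kk'$-point of $\qbics_V$, for any field extension $\kk'/\kk$, is the same datum as a $q$-bic form structured on $V_{\kk'}$, and that the $\GL_V(\kk')$-orbits on $\qbics_V(\kk')$ are precisely the isomorphism classes of such forms; this is immediate from the transformation rule for Gram matrices in \parref{forms-gram-matrix}. Let $\beta_{\mathbf{b}}$ be the standard form of type $\mathbf{b}$ from \parref{forms-standard}, regarded as a $\kk$-point of $\qbics_V$, and let $O \coloneqq \GL_V \cdot \beta_{\mathbf{b}}$ be its orbit. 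By \cite[Proposition 1.65]{Milne:AlgGroups}, $O$ is a smooth locally closed subscheme of $\qbics_V$, defined over $\kk$, equivariantly isomorphic to the homogeneous space $\GL_V/\AutSch_{(V,\mathbf{b})}$; in particular $O$ is irreducible, since $\GL_V$ is, and
\[
\dim O = \dim \GL_V - \dim \AutSch_{(V,\mathbf{b})} = n^2 - \dim\AutSch_{(V,\mathbf{b})},
\]
where the dimension of the (possibly nonreduced) stabilizer is insensitive to its nilpotents. Since $\qbics_V$ is the $n^2$-dimensional affine space $\mathbf{A}(V^{[1]}\otimes_\kk V)^\vee$, this is exactly the assertion that $O$ has codimension $\dim\AutSch_{(V,\mathbf{b})}$.

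Next I would identify $O$ with the set-theoretically defined locus $\qbics_{V,\mathbf{b}}$. The type of a $q$-bic form is read off from the dimensions of the pieces of its $\perp$-filtration, which are kernels of maps built functorially from $\beta$ and its Frobenius twists, so these dimensions are invariant under flat base field extension and the locus where they take the prescribed values $\mathbf{b}$ is cut out by rank conditions, hence locally closed. After base change to an algebraic closure $\bar\kk$, Corollary \parref{forms-classification-theorem-kbar} shows that every $\bar\kk$-point of $\qbics_{V,\mathbf{b}}$ is isomorphic to $\beta_{\mathbf{b}}$ over $\bar\kk$, hence lies in $O(\bar\kk)$; conversely the orbit morphism visibly lands in $\qbics_{V,\mathbf{b}}$. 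Thus $O$ and $\qbics_{V,\mathbf{b}}$ have the same points, and since $O$ is reduced --- being smooth over $\kk$ --- it coincides with $\qbics_{V,\mathbf{b}}$ equipped with its reduced structure. This yields the smoothness, irreducibility, and codimension statements. Finally, $\overline{O}\smallsetminus O$ is a union of $\GL_V$-orbits of strictly smaller dimension by \cite[Propositions 1.66 and 7.12]{Milne:AlgGroups}; each such orbit is the stratum $\qbics_{V,\mathbf{b}'}$ of the type of any of its points, so $\overline{\qbics_{V,\mathbf{b}}}$ is the union of $\qbics_{V,\mathbf{b}}$ with type strata of strictly smaller dimension, as claimed.

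The only genuine subtlety --- and the step I would flag as the point to get right --- is the passage between the abstract, set-theoretic stratum $\qbics_{V,\mathbf{b}}$ and a scheme-theoretic $\GL_V$-orbit: over a non-closed field not all forms of type $\mathbf{b}$ are $\GL_V(\kk)$-conjugate, so one cannot simply declare $\qbics_{V,\mathbf{b}}$ to be ``the orbit of $\beta$'' for an arbitrary $\kk$-point $\beta$. Working instead with the orbit of the standard form $\beta_{\mathbf{b}}$, which is rational over every field containing $\mathbf{F}_{q^2}$, sidesteps this; and the identification of geometric points is exactly what the Classification Theorem over $\bar\kk$ supplies. Everything else is a direct application of the quoted structure theory for orbits of algebraic group actions.
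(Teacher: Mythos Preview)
Your proposal is correct and follows the same route as the paper: the paper gives no proof beyond the preceding sentence and the citations to \cite[Propositions 1.65, 1.66, and 7.12]{Milne:AlgGroups}, so you have simply unpacked what those references say, together with the identification of the type stratum with the orbit of the standard form via \parref{forms-classification-theorem-kbar}. The care you take over non-closed base fields---working with the orbit of the standard $\kk$-rational form $\beta_{\mathbf b}$ and matching geometric points---is more than the paper spells out, but is exactly the implicit content of its one-line justification.
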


This induces a partial ordering amongst types of \(q\)-bic forms on \(V\):
write \(\mathbf{b} \geq \mathbf{b}'\) if and only if the closure of
\(\qbics_{V,\mathbf{b}}\) contains \(\qbics_{V,\mathbf{b}'}\). Simple observations:
First, the maximal and minimal types are those corresponding to
\(\mathbf{1}^{\oplus n}\) and \(\mathbf{N}_1^{\oplus n}\), respectively.
Second, by comparing dimensions with \parref{aut-dimension}, it follows that
the maximal type in the codimension \(c^2\) locus of corank \(c\) forms is
\[
\begin{dcases*}
\mathbf{1}^{\oplus n-2c} \oplus \mathbf{N}_2^{\oplus c} &
if \(0 \leq c \leq n/2\), and \\
\mathbf{N}_1^{\oplus 2c - n} \oplus \mathbf{N}_2^{\oplus n - c} &
if \(n/2 \leq c \leq n\).
\end{dcases*}
\]
Third, by dividing out radicals, it follows that the subposet of types
containing \(\mathbf{N}_1\) is isomorphic to the poset of \(q\)-bic types on a
vector space of dimension \(1\) less.

The goal of
this Section is to characterize this partial ordering. As a first step,
rephrase this in terms of specialization relations for \(q\)-bic forms:
\(\mathbf{b} \geq \mathbf{b}'\) if and only if for some---equivalently, for
any---pair of \(q\)-bic forms \(\beta\) and \(\beta'\) on \(V\) of types
\(\mathbf{b}\) and \(\mathbf{b}'\), respectively, there exists a discrete
valuation ring \(R\) over \(\kk\), a \(q\)-bic form \((M,\gamma)\) over \(R\),
and isomorphisms
\[
(V_K, \beta_K) \cong (M_K, \gamma_K)
\quad\text{and}\quad
(V_\kappa, \beta_\kappa') \cong (M_\kappa, \gamma_\kappa)
\]
as \(q\)-bic forms over the fraction field \(K\) and residue field \(\kappa\)
of \(R\), respectively. Denote this situation by
\(\beta \rightsquigarrow \beta'\) and say that \emph{\(\beta\) specializes to
\(\beta'\)}.

The remainder of this Section will be phrased in terms of specialization
relations amongst \(q\)-bic forms on \(V\), and the goal is to determine
necessary and sufficient conditions for a specialization
\(\beta \rightsquigarrow \beta'\) to exist. A sequence of necessary
conditions is obtained by combining the summation formula
\parref{moduli-dimension-summation} with the fact that boundary strata in the
closure have smaller dimension:

\begin{Proposition}\label{moduli-specializations-necessary}
If there exists a specialization \(\beta \rightsquigarrow \beta'\), then
\(\Phi_m(\beta) \leq \Phi_m(\beta')\) for all \(m \geq 1\).
\end{Proposition}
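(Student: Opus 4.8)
The plan is to isolate $\Phi_m$ as the leading term of a dimension count by summing the given degeneration with many copies of the standard block $\mathbf{N}_m$. The foundation is the monotonicity of automorphism dimensions along specializations: a specialization $\delta \rightsquigarrow \delta'$ of $q$-bic forms on a fixed vector space $U$ places $\delta'$ in the orbit closure of $\delta$ inside $\qbics_U$, so by \parref{forms-aut-strata-dimension} and irreducibility of $\qbics_U$ one has $\dim\qbics_{U,\operatorname{type}(\delta')} \leq \dim\qbics_{U,\operatorname{type}(\delta)}$, equivalently $\dim\AutSch_{(U,\delta)} \leq \dim\AutSch_{(U,\delta')}$, since both strata have codimension equal to the dimension of the corresponding automorphism group scheme.

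Next I would observe that specializations are preserved by orthogonal sum with a constant form: if $(M,\theta)$ over a discrete valuation ring $R$ over $\kk$ witnesses $\beta \rightsquigarrow \beta'$ on $V$, then for any $q$-bic form $(W,\gamma)$ over $\kk$ the $R$-form $(M \oplus (W \otimes_\kk R),\, \theta \oplus (\gamma \otimes_\kk R))$ has generic fibre $\beta_K \oplus \gamma_K$ over the fraction field $K$ and special fibre $\beta'_\kappa \oplus \gamma_\kappa$ over the residue field $\kappa$, hence witnesses $\beta \oplus \gamma \rightsquigarrow \beta' \oplus \gamma$ on $V \oplus W$. Apply this with $(W,\gamma) = (\kk^{\oplus mN}, \mathbf{N}_m^{\oplus N})$ for an integer $N \geq 1$; this form has type $(c;d_j)_{j \geq 1}$ with $c = 0$, $d_j = N$ for $j = m$, and $d_j = 0$ otherwise. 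Combined with the monotonicity above, this gives
\[
\dim\AutSch_{(V \oplus \kk^{\oplus mN},\, \beta \oplus \mathbf{N}_m^{\oplus N})}
\leq
\dim\AutSch_{(V \oplus \kk^{\oplus mN},\, \beta' \oplus \mathbf{N}_m^{\oplus N})}.
\]

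Now I would expand both sides with the summation formula \parref{moduli-dimension-summation}: with $\gamma = \mathbf{N}_m^{\oplus N}$ the correction terms reduce to $\big(\sum_{k \geq 1} b_{2k-1}\big) c + \sum_{j \geq 1} \Phi_j(\beta) d_j = N \Phi_m(\beta)$ since $c = 0$, and likewise $N \Phi_m(\beta')$ on the right. Cancelling the common summand $\dim\AutSch_{(\kk^{\oplus mN}, \mathbf{N}_m^{\oplus N})}$ leaves
\[
\dim\AutSch_{(V,\beta)} + N \Phi_m(\beta)
\leq
\dim\AutSch_{(V,\beta')} + N \Phi_m(\beta').
\]
Since the displayed $\AutSch$-dimensions do not depend on $N$ and this holds for every $N \geq 1$, it forces $\Phi_m(\beta) \leq \Phi_m(\beta')$, which is the claim. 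The only real content is this amplification step: the bare inequality $\dim\AutSch_{(V,\beta)} \leq \dim\AutSch_{(V,\beta')}$ carries no information about $\Phi_m$, but each added copy of $\mathbf{N}_m$ contributes precisely $\Phi_m$ to the automorphism dimension, so after $N$ copies the $\Phi_m$-term dominates the bounded discrepancy $\dim\AutSch_{(V,\beta')} - \dim\AutSch_{(V,\beta)}$.
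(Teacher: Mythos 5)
Your proof is correct and takes essentially the same route as the paper: the paper also deduces \(\beta \oplus \gamma \rightsquigarrow \beta' \oplus \gamma\), applies \parref{forms-aut-strata-dimension} to get \(\dim\AutSch_{(V\oplus W,\beta\oplus\gamma)} \leq \dim\AutSch_{(V\oplus W,\beta'\oplus\gamma)}\), and then extracts \(\Phi_m(\beta) \leq \Phi_m(\beta')\) from the linearity in the \(d_m\) of the formula in \parref{moduli-dimension-summation}. Your amplification with \(\gamma = \mathbf{N}_m^{\oplus N}\) and \(N \to \infty\) is just a concrete way of "comparing coefficients of \(d_m\)", which is how the paper phrases the final step.
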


\begin{proof}
Specializing subforms induces
\(\beta \oplus \gamma \rightsquigarrow \beta' \oplus \gamma\) for
all \(q\)-bic forms \((W,\gamma)\). Thus \parref{forms-aut-strata-dimension}
gives
\[
\dim \AutSch_{(V \oplus W, \beta  \oplus \gamma)} \leq
\dim \AutSch_{(V \oplus W, \beta' \oplus \gamma)}.
\]
According to \parref{moduli-dimension-summation}, the
dimensions of the automorphism groups grow linearly in the invariants of
\(\gamma\), so comparing coefficients of \(d_m\) gives the result.
\end{proof}

The following constructs a collection of basic specializations amongst
standard \(q\)-bic forms:

\begin{Lemma}\label{moduli-basic-specializations}
Let \(s \geq t \geq 1\) be integers. There exists specializations of \(q\)-bic
forms:
\begin{gather*}
\mathbf{N}_{2s+1}
  \rightsquigarrow \mathbf{1}^{\oplus 2} \oplus \mathbf{N}_{2s-1},
\qquad
\mathbf{N}_{2s}
  \rightsquigarrow \mathbf{1} \oplus \mathbf{N}_{2s - 1},
\qquad
\mathbf{1}^{\oplus 2} \oplus \mathbf{N}_{2s-2}
  \rightsquigarrow \mathbf{N}_{2s}, \\
\mathbf{N}_{2s-2t} \oplus \mathbf{N}_{2s+2}
  \rightsquigarrow \mathbf{N}_{2s-2t+2} \oplus \mathbf{N}_{2s},
\quad\quad
\mathbf{N}_{2s+1} \oplus \mathbf{N}_{2s+2t-1}
  \rightsquigarrow \mathbf{N}_{2s-1} \oplus \mathbf{N}_{2s+2t+1}.
\end{gather*}
\end{Lemma}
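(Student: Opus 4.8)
The plan is to witness each of the five asserted specializations by an explicit one-parameter family of Gram matrices. By the discussion preceding the statement, $\beta \rightsquigarrow \beta'$ holds exactly when the closure of the stratum $\qbics_{V,\operatorname{type}(\beta)}$ contains $\qbics_{V,\operatorname{type}(\beta')}$; since these strata are intrinsic, they are defined over $\mathbf{F}_{q^2}$, so it is enough to produce for each relation a matrix $B(u) \in \mathrm{Mat}_n(\mathbf{F}_{q^2}[u])$ such that the $q$-bic form with Gram matrix $B(u)$ has the type of the left-hand standard form at the generic point of $\mathbf{A}^1$ and the type of the right-hand standard form at $u = 0$. The induced morphism $\mathbf{A}^1 \to \qbics_V$ then sends the generic point into the source stratum and $0$ into the target stratum, so the target stratum meets the closure of the source stratum; as that closure is a union of strata by \parref{forms-aut-strata-dimension}, it contains the target stratum. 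The type of $B(u)$ is read off from the dimensions of the iterated kernels of the maps $\beta^{[j]}$, equivalently from ranks of matrices assembled from $B(u), B(u)^{[1]}, B(u)^{[2]}, \dots$, and such ranks are constant on a dense open of $\mathbf{A}^1$, so the verification amounts to a computation at the generic $u$ and at $u = 0$. (Alternatively, the recognition criterion \parref{classification-basis} verifies the generic fibre directly, via a basis with entries in $\kk(u)$.)

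For the three relations whose target has a nonsingular summand the families are explicit and the verification is routine. For $\mathbf{N}_{2s+1} \rightsquigarrow \mathbf{1}^{\oplus 2}\oplus\mathbf{N}_{2s-1}$, take $B(u) = \left(\begin{smallmatrix}0 & 1\\ 1 & 0\end{smallmatrix}\right)\oplus\mathbf{N}_{2s-1}$ with an additional $u$ placed in the $(2,3)$ entry, bridging the hyperbolic plane and the chain. For $\mathbf{N}_{2s} \rightsquigarrow \mathbf{1}\oplus\mathbf{N}_{2s-1}$, take $B(u) = \mathbf{1}\oplus\mathbf{N}_{2s-1}$ with an additional $u$ in the $(1,2)$ entry. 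For $\mathbf{1}^{\oplus 2}\oplus\mathbf{N}_{2s-2}\rightsquigarrow\mathbf{N}_{2s}$, take $B(u) = \mathbf{N}_{2s} + u(E_{1,1}+E_{2s,2s})$, the nilpotent chain with its two terminal diagonal entries switched on. In each case $B(0)$ is block diagonal and manifestly of the target type; that the generic fibre has the source type follows from a direct computation of the $\perp$-filtration in the spirit of \parref{standard-forms-filtration-example}.

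For the remaining two relations both sides are sums of two nilpotent blocks, and $B(u)$ is built from the block-diagonal target form by adjoining $u$-dependent off-diagonal entries that join the endpoints of its two Jordan chains: a single bridge for $\mathbf{N}_{2s-2t}\oplus\mathbf{N}_{2s+2}\rightsquigarrow\mathbf{N}_{2s-2t+2}\oplus\mathbf{N}_{2s}$, and a bridge together with a return edge for $\mathbf{N}_{2s+1}\oplus\mathbf{N}_{2s+2t-1}\rightsquigarrow\mathbf{N}_{2s-1}\oplus\mathbf{N}_{2s+2t+1}$, chosen so that $B(0)$ collapses to the block-diagonal target form while for generic $u$ the conditions of \parref{classification-basis} certify the source type. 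I expect this last point to be the main obstacle: in contrast to the classical situation for nilpotent conjugacy classes, the second relation moves the pair of Jordan lengths \emph{away} from balance---a phenomenon special to the semi-linear setting---so the correct bridging pattern is not dictated by the classical picture and must be found and checked by hand. Families (1)--(3), by contrast, are comparatively mechanical.
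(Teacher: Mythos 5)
Your overall strategy is the same as the paper's: exhibit each specialization by an explicit one-parameter family of Gram matrices whose special fibre is the target type and whose generic fibre is the source type. (The paper works over a discrete valuation ring with uniformizer \(\pi\) rather than over \(\mathbf{A}^1\), but this is the same thing.) Your families for the first three relations are correct and essentially interchangeable with the paper's: a nonsingular \(1\)- or \(2\)-dimensional block glued to a Jordan chain by a single entry equal to \(u\), and \(\mathbf{N}_{2s}\) with \(u\) in the two corner diagonal entries; the generic-fibre computations you defer do go through.

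The last two relations, however, are not actually proved, and this is a genuine gap rather than a deferral of routine checking. You yourself flag that the correct bridging pattern "must be found and checked by hand" and then neither find nor check it. Worse, the pattern you do describe for \(\mathbf{N}_{2s-2t}\oplus\mathbf{N}_{2s+2}\rightsquigarrow\mathbf{N}_{2s-2t+2}\oplus\mathbf{N}_{2s}\) --- "a single bridge joining the endpoints of its two Jordan chains" --- fails on its natural reading: adding one entry \(u\) from the end of the \(\mathbf{N}_{2s-2t+2}\) chain to the head of the \(\mathbf{N}_{2s}\) chain concatenates them, so the generic fibre is \(\mathbf{N}_{4s-2t+2}\), not \(\mathbf{N}_{2s-2t}\oplus\mathbf{N}_{2s+2}\) (the two agree only in the degenerate case \(t=s\)). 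The paper's construction is structurally different: it places the two chains \(\mathbf{N}_{2s}\) and \(\mathbf{N}_{2s-2t}\) alongside an auxiliary \(\mathbf{N}_2\) block and routes the tail of \emph{each} chain into the head of that shared block, one bridge carrying the unit \(1\) and the other carrying \(\pi\); at \(\pi=0\) the unit bridge extends \(\mathbf{N}_{2s-2t}\) to \(\mathbf{N}_{2s-2t+2}\), while generically the interaction of the two bridges through the shared block redistributes the lengths to \(\mathbf{N}_{2s-2t}\oplus\mathbf{N}_{2s+2}\). The fifth relation (the one that moves the lengths away from balance) is handled by the analogous three-block matrix with odd chains, and there too the verification via the \(\perp\)-filtration is the substance of the proof. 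Until you produce and verify explicit matrices for these two families, the lemma is not established.
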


\begin{proof}
Let \(R\) be a discrete valuation ring over \(\kk\) with uniformizing
parameter \(\pi\), residue field \(\kk\), and fraction field \(K\). Let
\(M \coloneqq \bigoplus\nolimits_{i = 1}^n R \cdot e_i\) be a free \(R\)-module
of rank \(n\), and let \(\gamma\) be a \(q\)-bic form on \(M\) determined by
one of the following Gram matrices:
\[
\scalemath{0.75}{
\left[
  \begin{array}{@{}c|c@{}}
    \mathbf{N}_{2s-1} &
    {
    \begingroup
    \setlength\arraycolsep{0.2em}
      \begin{matrix}
        0 & 0 \\[-0.6em]
        \vdots & \vdots \\[-0.4em]
        \pi & 0
      \end{matrix}
    \endgroup}
    \\
    \hline
    {
    \begingroup
    \setlength\arraycolsep{0.2em}
      \begin{matrix}
      0 & \cdots & 0 \\[-0.4em]
      0 & \cdots & 0
      \end{matrix}
    \endgroup
    }
    &
    {
    \begingroup
    \setlength\arraycolsep{0.2em}
    \begin{matrix}
    0 & 1 \\[-0.4em]
    1 & 0
    \end{matrix}
    \endgroup}
  \end{array}
\right]},
\quad
\scalemath{0.75}{
\left[
  \begin{array}{@{}c|c@{}}
    \mathbf{N}_{2s-1} &
    \begin{matrix} 0 \\[-0.6em] \vdots \\[-0.4em] \pi \end{matrix} \\
    \hline
    {
    \begingroup
    \setlength\arraycolsep{0.2em}
    \begin{matrix}
    0 & \cdots & 0
    \end{matrix}
    \endgroup
    }
    &
    \begin{matrix}
     1
    \end{matrix}
  \end{array}
\right]},
\quad
\scalemath{0.75}{
\left[
  \begin{array}{@{}c|c@{}}
    \mathbf{N}_{2s-2} &
    {
    \begingroup
    \setlength\arraycolsep{0.2em}
      \begin{matrix}
        0 & 0 \\[-0.6em]
        \vdots & \vdots \\[-0.4em]
        1 & 0
      \end{matrix}
    \endgroup
    }\\
    \hline
    {
    \begingroup
    \setlength\arraycolsep{0.2em}
      \begin{matrix}
        0 & \cdots & 0 \\[-0.4em]
        0 & \cdots & 0
      \end{matrix}
    \endgroup
    }
    &
    {
    \begingroup
    \setlength\arraycolsep{0.2em}
      \begin{matrix}
        0 & 1 \\[-0.4em]
        \pi & 0
      \end{matrix}
    \endgroup
    }
  \end{array}
\right]},
\quad
\left[
  \begin{array}{@{}c|c|c@{}}
    \scalemath{0.8}{\mathbf{N}_{2s}} & &
    \scalemath{0.75}{
    \begingroup
    \setlength\arraycolsep{0.2em}
    \begin{matrix}
    0 & 0 \\[-0.6em]
    \vdots & \vdots \\[-0.4em]
    \pi & 0 \end{matrix}
    \endgroup} \\
    \hline
    & \scalemath{0.8}{\mathbf{N}_{2s-2t}}
    &
    \scalemath{0.75}{
    \begingroup
    \setlength\arraycolsep{0.2em}
    \begin{matrix}
      0 & 0 \\[-0.6em]
      \vdots & \vdots \\[-0.4em]
      1 & 0
    \end{matrix}
    \endgroup} \\
    \hline
    & &
    \scalemath{0.75}{
    \begingroup
    \setlength\arraycolsep{0.2em}
      \begin{matrix}
        0 & 1 \\[-0.4em]
        0 & 0
      \end{matrix}
    \endgroup}
  \end{array}
\right],
\quad
\left[
  \begin{array}{@{}c|c|c@{}}
    \scalemath{0.8}{\mathbf{N}_{2s-1}} & &
    \scalemath{0.75}{
    \begingroup
      \setlength\arraycolsep{0.2em}
      \begin{matrix}
        0      & 0      \\[-0.6em]
        \vdots & \vdots \\[-0.4em]
        \pi    & 0
        \end{matrix}
    \endgroup} \\
    \hline
    & \scalemath{0.8}{\mathbf{N}_{2s+2t-1}} &
    \scalemath{0.75}{
    \begingroup
      \setlength\arraycolsep{0.2em}
      \begin{matrix}
        0 & 0 \\[-0.6em]
        \vdots & \vdots \\[-0.4em]
        1 & 0
      \end{matrix}
    \endgroup} \\
    \hline
    & &
    \scalemath{0.75}{
    \begingroup
      \setlength\arraycolsep{0.2em}
      \begin{matrix}
        0 & 1 \\[-0.4em]
        0 & 0
      \end{matrix}
    \endgroup}
  \end{array}
\right].
\]
Then the reduction of \(\gamma\) modulo \(\pi\) yields a \(q\)-bic form of type
\[
\mathbf{1}^{\oplus 2} \oplus \mathbf{N}_{2s-1}, \quad
\mathbf{1} \oplus \mathbf{N}_{2s-1}, \quad
\mathbf{N}_{2s}, \quad
\mathbf{N}_{2s-2t+2} \oplus \mathbf{N}_{2s}, \quad
\mathbf{N}_{2s-1} \oplus \mathbf{N}_{2s+2t+1},
\]
respectively. To conclude, by \parref{forms-classification-theorem}, it
suffices to compute the \(\perp\)-filtration of
\(\beta \coloneqq \gamma \otimes_R K\) on
\(V \coloneqq M \otimes_R K\); do this by noting that, for each \(k \geq 1\),
\begin{equation}\label{moduli-basic-specializations.filtration}
\mathrm{P}_{2k-1} V
= \beta^{-1}\big((V/\mathrm{P}_{2k-2} V)^{[1],\vee}\big)
\quad\text{and}\quad
(V/\mathrm{P}_{2k} V)^\vee =
\image\big(\beta^\vee \colon \mathrm{P}_{2k-1} V^{[1]} \to V^\vee\big).
\end{equation}

Consider the first three Gram matrices. Comparing with
\parref{standard-forms-filtration-example} shows that
\(\mathrm{P}_1 V = \langle e_1 \rangle\), and that the maps
\(\beta \colon V \to V^{[1],\vee}\) and
\(\beta^\vee \colon V^{[1]} \to V^\vee\) satisfy
\[
\beta \colon e_i \mapsto e_{i-1}^{[1],\vee}
\quad\text{and}\quad
\beta^\vee \colon
e_i^{[1]} \mapsto e_{i+1}^\vee
\quad\text{for each}\;
1 \leq i \leq 2s-2,
\]
where \(e_0 \coloneqq 0\). Therefore
\(\mathrm{P}_{2k-1} V = \bigoplus\nolimits_{\ell = 1}^k K \cdot e_{2\ell-1}\)
and
\(V/\mathrm{P}_{2k} V \cong \bigoplus\nolimits_{\ell = 1}^k K \cdot e_{2\ell}\)
for each \(1 \leq k \leq s-1\).
Now consider each case in turn:

The first Gram matrix satisfies
\(\beta \colon e_{2s-1} \mapsto e_{2s-2}^{[1],\vee}\),
\(\beta^\vee \colon e_{2s-1}^{[1]} \mapsto \pi e_{2s}^\vee\), and
\(\beta \colon e_{2s+1} \mapsto e_{2s}^{[1],\vee}\).
Since \(\pi\) is invertible in \(K\), these combined with
\eqref{moduli-basic-specializations.filtration} respectively imply
\[
\mathrm{P}_{2s-1} V = \bigoplus\nolimits_{k = 1}^s K \cdot e_{2k-1},
\qquad
V/\mathrm{P}_{2s} V \cong \bigoplus\nolimits_{k = 1}^s K \cdot e_{2k},
\qquad
\mathrm{P}_{2s+1} V = \bigoplus\nolimits_{k = 1}^{s+1} K \cdot e_{2k-1},
\]
and so \(\beta\) is of type \(\mathbf{N}_{2s+1}\). A similar computation
applies to the second Gram matrix to show that, in that case,
\(\beta\) is of type \(\mathbf{N}_{2s}\).

The third Gram matrix satisfies
\(\beta \colon e_{2s-1} \mapsto e_{2s-2}^{[1],\vee} + \pi e_{2s}^{[1],\vee}\)
so neither \(e_{2s-2}^{[1],\vee}\) nor \(e_{2s}^{[1],\vee}\)
lies in the image of \(\beta\). Therefore the \(\perp\)-filtration has length
\(2s-2\) and
\[
\mathrm{P}_+ V/\mathrm{P}_- V =
\mathrm{P}_{2s-2} V/\mathrm{P}_{2s-3} V \cong K \cdot e_{2s-1} \oplus K \cdot e_{2s}
\]
meaning that \(\beta\) is of type \(\mathbf{1}^{\oplus 2} \oplus \mathbf{N}_{2s-2}\).

Completely analogous computations show that for the fourth and fifth
Gram matrices, \(\beta\) is of type
\(\mathbf{N}_{2s-2t} \oplus \mathbf{N}_{2s+2}\) and
\(\mathbf{N}_{2s+1} \oplus \mathbf{N}_{2s+2t-1}\), respectively.
\end{proof}

For each \(m \geq 1\) and \(q\)-bic form \(\beta\) of type
\((a;b_m)_{m \geq 1}\), write
\(\Theta_m(\beta) \coloneqq \sum_{k = 1}^m b_{2k-1}\).
The following gives a partial converse to
\parref{moduli-specializations-necessary}. The result is almost enough to
completely determine the closure relations in \(\qbics_V\) up to dimension \(6\): see
\cite[\href{https://arxiv.org/pdf/2205.05273.pdf\#subsection.3.1.2}{\textbf{3.1.2}},
\href{https://arxiv.org/pdf/2205.05273.pdf\#subsection.3.4.3}{\textbf{3.4.3}}, and
\href{https://arxiv.org/pdf/2205.05273.pdf\#subsection.3.8.2}{\textbf{3.8.2}}]{thesis}
for dimensions \(\leq 4\), and Figures \parref{moduli-five-dim-figure} and
\parref{moduli-six-dim-figure} for dimensions \(5\) and \(6\), respectively, where
\(\mathbf{0}\) is written in place of  \(\mathbf{N}_1\) for emphasis. See
\parref{moduli-specializations-remarks} for additional comments.

\begin{figure}
\[
\begin{tikzcd}[row sep=0.1em, column sep=0.8em]
  \mathbf{1}^6 \rar[symbol={\rightsquigarrow}]
& \mathbf{1}^4 \mathbf{N}_2 \rar[symbol={\rightsquigarrow}]
& \mathbf{1}^2 \mathbf{N}_4 \rar[symbol={\rightsquigarrow}] \ar[dr,symbol={\rightsquigarrow}]
& \mathbf{N}_6              \rar[symbol={\rightsquigarrow}] \ar[dr,symbol={\rightsquigarrow}]
& \mathbf{1} \mathbf{N}_5   \rar[symbol={\rightsquigarrow}]
& \mathbf{1}^3 \mathbf{N}_3 \rar[symbol={\rightsquigarrow}] \ar[dr,symbol={\rightsquigarrow}]
& \mathbf{0} \mathbf{1}^5   \ar[dr,symbol={\rightsquigarrow}]
\\
&
&
& \mathbf{1}^2 \mathbf{N}_2^2 \rar[symbol={\rightsquigarrow}]
& \mathbf{N}_2 \mathbf{N}_4   \ar[rr,symbol={\rightsquigarrow}]
&
& \mathbf{1}\mathbf{N}_2\mathbf{N}_3 \rar[symbol={\rightsquigarrow}] \ar[dr,symbol={\rightsquigarrow}] \ar[dd,symbol={\rightsquigarrow}]
& \mathbf{0}\mathbf{1}^3\mathbf{N}_2
\\
&
&
&
&
&
&
& \mathbf{N}_3^2 \rar[symbol={\rightsquigarrow}]
& \mathbf{0}\mathbf{N}_5
\\
&
&
&
&
&
& \mathbf{N}_2^3 \ar[rr,symbol={\rightsquigarrow}]
&
& \mathbf{0} \mathbf{1} \mathbf{N}_2^2
\end{tikzcd}
\]
\caption{Immediate specialization relations amongst for \(6\)-dimensional
\(q\)-bic forms, up to the first few with nontrivial radical.}
\label{moduli-six-dim-figure}
\end{figure}

\begin{Proposition}\label{moduli-specializations-sufficient}
Let \(\beta\) and \(\beta'\) be \(q\)-bic forms of type \((a;b_m)_{m \geq 1}\)
and \((a';b_m')_{m \geq 1}\), respectively. If
\(\Phi_m(\beta) \leq \Phi_m(\beta')\)
and \(\Theta_m(\beta) \leq \Theta_m(\beta')\) for all \(m \geq 1\), then
there exists a specialization \(\beta \rightsquigarrow \beta'\).
\end{Proposition}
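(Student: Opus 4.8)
The plan is to reduce the statement to a combinatorial assertion about types and then resolve that by a terminating construction of a chain of ``atomic'' specializations, each realized explicitly by a Gram matrix over a discrete valuation ring as in \parref{moduli-basic-specializations}. First, since $\beta \rightsquigarrow \beta'$ depends only on the types $\mathbf{b} \coloneqq (a; b_m)_{m \geq 1}$ and $\mathbf{b}' \coloneqq (a'; b_m')_{m \geq 1}$ by the Classification Theorem \parref{forms-classification-theorem}, since specialization is transitive, and since it is compatible with orthogonal sums --- if $\gamma \rightsquigarrow \gamma'$ then $\gamma \oplus \delta \rightsquigarrow \gamma' \oplus \delta$ --- it suffices to connect the standard forms of types $\mathbf{b}$ and $\mathbf{b}'$ by a chain of specializations, each of which may be arranged to fix an orthogonal summand. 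Recalling from the Introduction that $\Phi_{2k-1}(\beta) = \dim_\kk V + \Psi_{2k-1}(\beta)$, that $\Phi_{2k}(\beta) = 2 \Psi_{2k}(\beta)$, and that $\dim_\kk V$ is common to $\beta$ and $\beta'$, the hypothesis reads $\Psi_m(\mathbf{b}) \leq \Psi_m(\mathbf{b}')$ and $\Theta_m(\mathbf{b}) \leq \Theta_m(\mathbf{b}')$ for all $m \geq 1$; denote this relation by $\mathbf{b} \preceq \mathbf{b}'$.

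The crux is the following key lemma: if $\mathbf{b} \preceq \mathbf{b}'$ and $\mathbf{b} \neq \mathbf{b}'$, then there is a specialization $\mathbf{b} \rightsquigarrow \mathbf{b}''$ with $\mathbf{b}'' \neq \mathbf{b}$ and $\mathbf{b}'' \preceq \mathbf{b}'$. Granting it, the proof finishes: starting from $\mathbf{b}$ and repeatedly applying the lemma produces a chain of proper specializations, each of which strictly increases $\dim\AutSch$ by \parref{forms-aut-strata-dimension}; since this quantity is bounded and there are only finitely many types on $V$, the chain terminates, and it can only terminate at a type that is $\preceq \mathbf{b}'$ yet admits no further step --- by the lemma, at $\mathbf{b}'$ itself. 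That a terminal type must in fact equal $\mathbf{b}'$, rather than merely share its invariants, also follows from the injectivity of $\mathbf{b} \mapsto \big(\dim_\kk V; (\Psi_m(\mathbf{b}), \Theta_m(\mathbf{b}))_{m \geq 1}\big)$: the $\Theta_m$ recover the odd $b_m$, the $\Psi_{2k}$ then recover the even $b_m$, and $a = n - \sum_m m b_m$.

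To produce the step $\mathbf{b} \rightsquigarrow \mathbf{b}''$, I would compare $\mathbf{b}$ and $\mathbf{b}'$ at the least index where they differ and, according to the parities involved, choose an atomic specialization: the five families in \parref{moduli-basic-specializations}, applied to an orthogonal summand, cover redistribution among the even blocks, redistribution among the odd blocks, and the exchanges between $\mathbf{1}$-blocks and blocks of either parity; the remaining cases --- chiefly those in which an odd block must be created or lengthened, as foreshadowed by relations of the shape $\mathbf{1} \oplus \mathbf{N}_{2s} \rightsquigarrow \mathbf{N}_{2s+1}$ --- are handled by exhibiting, exactly in the manner of the proof of \parref{moduli-basic-specializations}, one more Gram matrix over a discrete valuation ring, with its parameter $\pi$ positioned so that the generic and special fibres acquire the required types. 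In every case the type of the resulting family is read off by computing the $\perp$-filtration as in \parref{standard-forms-filtration-example} and \parref{forms-classification-theorem}, and it is then verified via the closed formulae for $\Psi_m$ and $\Theta_m$ that $\mathbf{b}'' \preceq \mathbf{b}'$ still holds.

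The main obstacle is this last verification: organizing the finite case analysis so that for every $\mathbf{b} \preceq \mathbf{b}'$ with $\mathbf{b} \neq \mathbf{b}'$ there is an atomic specialization out of $\mathbf{b}$ overshooting none of the inequalities $\Psi_m \leq \Psi_m(\mathbf{b}')$, $\Theta_m \leq \Theta_m(\mathbf{b}')$. This is a parity-refined counterpart of the statement that covering relations generate the dominance order on partitions, made subtler by the three species of blocks ($\mathbf{1}$, odd $\mathbf{N}_m$, even $\mathbf{N}_m$) and by the asymmetric roles of $\Psi$ and $\Theta$; ensuring the chosen move is genuinely available, and building the discrete valuation ring family that realizes it when no family from \parref{moduli-basic-specializations} suffices, is the technical heart of the argument.
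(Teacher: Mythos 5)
Your reduction to a purely combinatorial statement about types is correct and matches the paper's: specialization depends only on type by \parref{forms-classification-theorem}, is transitive and compatible with orthogonal sums, the inequalities \(\Phi_m \leq \Phi_m'\) are equivalent to \(\Psi_m \leq \Psi_m'\), and the invariants \((\dim_\kk V; \Psi_m, \Theta_m)\) determine the type, so the termination argument via \parref{forms-aut-strata-dimension} is fine. But the proposal stops exactly where the real work begins. Your ``key lemma'' --- that any \(\mathbf{b} \preceq \mathbf{b}'\) with \(\mathbf{b} \neq \mathbf{b}'\) admits an atomic specialization \(\mathbf{b} \rightsquigarrow \mathbf{b}''\) with \(\mathbf{b}'' \preceq \mathbf{b}'\) --- is asserted, sketched only as ``compare at the least index where they differ and choose a move according to parity,'' and then explicitly deferred as ``the technical heart.'' That case analysis \emph{is} the proof, and it is not a routine verification: the paper needs five ordered stages (first arrange \(b_{2k-1} \leq b_{2k-1}'\), then \(b_{2k-1} = b_{2k-1}'\), then equalize \(\sum_k k b_{2k}\), then \(\sum_k b_{2k}\), then redistribute among even blocks), and within them auxiliary facts such as the dimension identity \(a = a' + \sum_i i(b_i' - b_i)\) to certify that enough \(\mathbf{1}\)-summands are available for moves like \(\mathbf{1}^{\oplus 2t-2s-1} \oplus \mathbf{N}_{2s} \rightsquigarrow \mathbf{N}_{2t-1}\), and the telescoping inequalities \(\Psi_{2k+2} - \Psi_{2k} \leq \Psi_{2k+2}' - \Psi_{2k}'\) to propagate the constraints past the index being modified.

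Two further cautions. First, your lemma in its clean form is stronger than what the paper establishes: the paper does not maintain the bare relation \(\preceq\) from one move to the next, but progressively stronger invariants (after its first stage the pointwise inequalities \(b_{2k-1} \leq b_{2k-1}'\) supersede the \(\Theta_m\)), and the ordering of the stages matters --- a greedy ``fix the first discrepancy'' rule can overshoot a \(\Psi_{2k}'\) or consume a \(\mathbf{1}\)-summand that a later move needs, which is precisely what the staging is designed to prevent. Second, you correctly observe that not every required atomic move is among the five families of \parref{moduli-basic-specializations} (notably those creating or lengthening odd blocks out of \(\mathbf{1}\)'s and an even block), but the construction of the extra Gram matrices over a discrete valuation ring and the computation of their generic and special types is also left undone. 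As it stands the proposal is a correct plan whose decisive combinatorial and constructive steps are missing.
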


\begin{proof}
For the arguments below, it is convenient to replace \(\Phi_m\) from
\parref{moduli-dimension-summation} by
\[
\Psi_m(\beta) \coloneqq
\begin{dcases*}
b_{2k-1} + 2\sum\nolimits_{\ell = 1}^{k-1} (k-\ell) b_{2\ell-1}
& if \(m = 2k - 1\), and \\
\sum\nolimits_{\ell = 1}^{k-1} \ell b_{2\ell} + k\Big(\sum\nolimits_{\ell \geq 1} b_{2\ell-1} + \sum\nolimits_{\ell \geq k} b_{2\ell}\Big)
& if \(m = 2k\),
\end{dcases*}
\]
in which redundant constants are removed, so that
\(\Phi_m(\beta) \leq \Phi_m(\beta')\) if and only if
\(\Psi_m(\beta) \leq \Psi_m(\beta')\).

The argument proceeds in steps. Each time,
\parref{moduli-basic-specializations} is applied to a subform of \(\beta\) to
produce an intermediate specialization \(\beta''\). The new form will satisfy
\(\Psi_m(\beta'') \leq \Psi_m(\beta')\) for all \(m\), together with
successively stronger conditions that, in particular, imply
\(\Theta_m(\beta'') \leq \Theta_m(\beta')\) for all \(m\). Replace
\(\beta\) by \(\beta''\) and continue in this manner until
\(\Psi_m(\beta) = \Psi_m(\beta')\) for all \(m\). This implies that
\(\beta\) and \(\beta'\) are of the same type, at which point the
Proposition follows from the Classification Theorem
\parref{forms-classification-theorem}.

In what follows, abbreviate
\(\Psi_m(\beta)\), \(\Psi_m(\beta')\), and \(\Psi_m(\beta'')\) to
\(\Psi_m\), \(\Psi_m'\), and \(\Psi''_m\);
similarly for \(\Theta_m\), etc.

\newcounter{specialization-steps} 
\smallskip
\textbf{Step \refstepcounter{specialization-steps}\arabic{specialization-steps}.}
\emph{After a sequence of specializations of the form
\[
\mathbf{N}_{2s+1} \rightsquigarrow
\mathbf{1}^{\oplus 2} \oplus
\mathbf{N}_{2s-1},
\]
may assume that \(b_{2k-1} \leq b_{2k-1}'\) for every \(k \geq 1\).}
\smallskip

The inequalities \(\Psi_{2k-1} \leq \Psi_{2k-1}'\) imply
that \(b_{2k-1}\) must be smaller than \(b_{2k-1}'\) the first time they
differ. This implies: if \(s\) is minimal such that
\(b_{2s+1} > b_{2s+1}'\), then \(\Psi_{2s-1} < \Psi_{2s-1}'\) and
\(\Theta_s < \Theta_s'\). The former inductively implies that
\(\Psi_{2k-1} < \Psi_{2k-1}'\) for every \(k \geq s\) since
\[
\Psi_{2k+1} - \Psi_{2k-1} =
\Theta_{k+1} + \Theta_{k-1} \leq
\Theta_{k+1}' + \Theta_{k-1}' =
\Psi_{2k+1}' - \Psi_{2k-1}'
\]
upon using the inequalities \(\Theta_m \leq \Theta_m'\) for \(m = k-1\) and
\(m = k+1\). The latter shows that this inequality is strict for \(k = s+1\),
implying that, furthermore, \(\Psi_{2k+1} + 1 < \Psi_{2k+1}'\) for all
\(k \geq s+1\). So if \(\beta''\) is obtained from \(\beta\) via a
specialization of the form
\(\mathbf{N}_{2s+1} \rightsquigarrow \mathbf{1}^{\oplus 2} \oplus \mathbf{N}_{2s-1}\),
then
\[
\Psi_m'' = \begin{dcases*}
\Psi_{2k-1} + 1 & if \(m = 2s-1\) or \(m = 2s+1\), \\
\Psi_{2k-1} + 2 & if \(m = 2k-1 > 2s+1\), \\
\Psi_m          & otherwise,
\end{dcases*}
\quad\text{and}\quad
\Theta_m'' = \begin{dcases*}
\Theta_s + 1 & if \(m = s\), \\
\Theta_m & otherwise,
\end{dcases*}
\]
and so with the inequalities above, this implies
\(\Psi_m'' \leq \Psi_m'\) and \(\Theta_m'' \leq \Theta_m'\) for all \(m\).

\smallskip
\textbf{Step \refstepcounter{specialization-steps}\arabic{specialization-steps}.}
\emph{After a sequence of specializations of the form
\[
\mathbf{N}_{2s} \rightsquigarrow
\mathbf{1}^{\oplus 2s-2t+1} \oplus \mathbf{N}_{2t-1}
\quad\text{or}\quad
\mathbf{1}^{\oplus 2t-2s-1} \oplus \mathbf{N}_{2s} \rightsquigarrow
\mathbf{N}_{2t-1},
\]
may assume that \(b_{2k-1} = b_{2k-1}'\) for every \(k \geq 1\).}

\smallskip
From now on, assume that \(b_{2k-1} \leq b_{2k-1}'\) for all \(k \geq 1\),
superseding the inequalities \(\Theta_m \leq \Theta_m'\). Note this also
implies \(\Psi_{2k-1} \leq \Psi_{2k-1}'\) for all \(k\), so it will suffice to
verify the inequalities \(\Psi_{2k} \leq \Psi_{2k}'\).

To begin this Step, suppose first that the inequalities
\begin{equation}\label{specialization-steps.even-domination}
\sum\nolimits_{k \geq m} b_{2k} \leq
\sum\nolimits_{k \geq m} b_{2k}'
\end{equation}
are satisfied for every \(m \geq 1\).
Comparing the formulae from \parref{forms-numerical-invariants-type} for
\(\dim_\kk V\) in terms of the invariants of
\(\beta\) and \(\beta'\), and successively applying the inequalities
\eqref{specialization-steps.even-domination} for increasing \(m\) gives
the inequality \begin{equation}\label{specialization-steps.compare-dimensions}
a
= a' + \sum\nolimits_{i \geq 1} i(b_i' - b_i)
\geq a' + \sum\nolimits_{k \geq 1} (2k-1)(b_{2k-1}' - b_{2k-1}).
\end{equation}
Let \(t\) be any index such that \(b_{2t-1} < b_{2t-1}'\). Then
\eqref{specialization-steps.compare-dimensions} implies
\(a \geq 2t-1\), meaning \(\beta\) contains a subform of type
\(\mathbf{1}^{\oplus 2t-1}\). Let \(\beta''\) be obtained via a specialization
\(\mathbf{1}^{\oplus 2t-1} \rightsquigarrow \mathbf{N}_{2t-1}\). The choice of
\(t\) ensures that \(b_{2k-1}'' \leq b_{2k-1}'\) for all \(k\). Combined with
the \(m = 1\) case of \eqref{specialization-steps.even-domination}, this
implies that
\[
\Psi_2'' =
\sum\nolimits_{i \geq 1} b_i'' \leq
\sum\nolimits_{i \geq 1} b_i' =
\Psi_2'.
\]
Then, for each \(k \geq 1\), the \(m = k+1\) case of
\eqref{specialization-steps.even-domination} gives inequalities
\begin{equation}\label{specialization-steps.even-differences}
\Psi_{2k+2}'' - \Psi_{2k}'' =
\sum\nolimits_{\ell \geq 1} b_{2\ell-1}'' +
\sum\nolimits_{\ell \geq k+1} b_{2\ell}''
\leq
\sum\nolimits_{\ell \geq 1} b_{2\ell-1}' +
\sum\nolimits_{\ell \geq k+1} b_{2\ell}' =
\Psi_{2k+2}' - \Psi_{2k}'.
\end{equation}
Starting from \(\Psi_2'' \leq \Psi_2'\), this inductively implies that
\(\Psi_{2k}'' \leq \Psi_{2k}'\) for all \(k \geq 1\).

It remains to consider the situation when at least one of the inequalities in
\eqref{specialization-steps.even-domination} fails. Since
\(b_{2k} = b_{2k}' = 0\) for large \(k\), there is a maximal \(s\) such that
\[
\sum\nolimits_{k \geq s} b_{2k} >
\sum\nolimits_{k \geq s} b_{2k}'.
\]
Comparing this with \eqref{specialization-steps.even-domination}
where \(m = s+1\) gives \(b_{2s} > b_{2s}'\). Now consider two cases:

\smallskip
\textbf{Case \arabic{specialization-steps}A.}
\emph{There exists \(t \leq s\) such that \(b_{2t-1} < b_{2t-1}'\).}

\smallskip
Let \(\beta''\) be any specialization of \(\beta\) obtained via
\(\mathbf{N}_{2s} \rightsquigarrow \mathbf{1}^{\oplus 2s-2t+1} \oplus \mathbf{N}_{2t-1}\).
Then \(\Psi_{2k}'' = \Psi_{2k} \leq \Psi_{2k}'\) for all \(k \leq s\).
Step 1 together with maximality of \(s\) implies that
\eqref{specialization-steps.even-differences} holds for all \(k \geq s\), which
then inductively implies that
\(\Psi_{2k}'' \leq \Psi_{2k}'\) also holds for \(k > s\).

\smallskip
\textbf{Case \arabic{specialization-steps}B.}
\emph{Every \(k\) for which \(b_{2k-1} < b_{2k-1}'\) satisfies \(k > s\).}

\smallskip
Applying the inequality \(\Psi_{2s}' - \Psi_{2s} \geq 0\) twice to the middle
term of \eqref{specialization-steps.compare-dimensions} gives the inequality
\[
a \geq
a'
+ \sum\nolimits_{k \geq s+1} (2k -2s)(b_{2k}' - b_{2k})
+ \sum\nolimits_{k \geq 1}(2k -2s-1)(b_{2k-1}' - b_{2k-1}).
\]
The first sum on the right is nonnegative by
\eqref{specialization-steps.even-domination} and maximality of \(s\). The
assumption in this case implies that the second sum is nonnegative, and that if
\(t\) is any index such that \(b_{2t-1} < b_{2t-1}'\), then in fact
\(a \geq 2t-2s-1 > 0\). In other words, \(\beta\) contains
\(\mathbf{1}^{\oplus 2t-2s-1} \oplus \mathbf{N}_{2s}\) as a subform;
let \(\beta''\) be obtained by specializing such a subform to \(\mathbf{N}_{2t-1}\).
Arguing as above shows that \(\Psi_{2k}'' \leq \Psi_{2k}'\) for all \(k\).

\smallskip
\textbf{Step \refstepcounter{specialization-steps}\arabic{specialization-steps}.}
\emph{After a sequence of specializations of the form
\[
\mathbf{1}^{\oplus 2} \oplus \mathbf{N}_{2s-2} \rightsquigarrow
\mathbf{N}_{2s}
\]
may assume that
\(\sum\nolimits_{k \geq 1} k b_{2k} = \sum\nolimits_{k \geq 1} k b_{2k}'\).
}

\smallskip
After Step 2, by passing to subforms, it may be assumed that
\(b_{2k-1} = b_{2k-1}' = 0\) for all \(k\).

Let \(s \geq 1\) be maximal such that \(\Psi_{2s-2} = \Psi_{2s-2}'\),
setting \(\Psi_0 = \Psi_0' = 0\) so that the maximum always exists. If
\(s = \max\set{k | b_{2k-2} \neq 0 \;\text{or}\; b_{2k-2}' \neq 0}\),
then this is equivalent to
\(\sum\nolimits_{k \geq 1} k b_{2k} = \sum\nolimits_{k \geq 1} k b_{2k}'\),
as required. Otherwise,
\(\sum\nolimits_{k \geq 1} k b_{2k} < \sum\nolimits_{k \geq 1} k b_{2k}'\),
so combined with the first equation in \eqref{specialization-steps.compare-dimensions},
\[
a = a' + 2\sum\nolimits_{k \geq 1} k (b_{2k}' - b_{2k}) \geq 2.
\]
Since \(\Psi_{2k} \leq \Psi_{2k}'\) for all \(k\) with strict inequality for
\(k \geq s\), it follows that
\[
b_{2s-2}
= 2\Psi_{2s-2} - \Psi_{2s-4} - \Psi_{2s}
> 2\Psi_{2s-2}' - \Psi_{2s-4}' - \Psi_{2s}'
= b_{2s-2}'.
\]
Thus, in this case,
\(\beta\) contains a subform of type \(\mathbf{1}^{\oplus 2} \oplus \mathbf{N}_{2s-2}\);
let \(\beta''\) be obtained by specializing it to a form of
type \(\mathbf{N}_{2s}\). Then \(\Psi_{2k}'' = \Psi_{2k}\) for
\(1 \leq k \leq s-1\) and \(\Psi_{2k}'' = \Psi_{2k} + 1\) for \(k \geq s\), so
maximality of \(s\), implies \(\Psi_{2k}'' \leq \Psi_{2k}'\) for all \(k\).

\smallskip
\noindent\textbf{Step \refstepcounter{specialization-steps}\arabic{specialization-steps}.}
\emph{After a sequence of specializations of the form
\[
\mathbf{N}_{2s} \rightsquigarrow
\mathbf{N}_2 \oplus \mathbf{N}_{2s-2}
\]
may assume that \(\sum\nolimits_{k \geq 1} b_{2k} = \sum\nolimits_{k \geq 1} b_{2k}'\).
}

\smallskip
Let \(s \geq 1\) be minimal such that \(\Psi_{2s} = \Psi_{2s}'\). If \(s = 1\), this
is equivalent to the desired equality. Otherwise, as in Step 3,
\[
b_{2s}
= 2\Psi_{2s} - \Psi_{2s-2} - \Psi_{2s+2}
> 2\Psi_{2s}' - \Psi_{2s-2}' - \Psi_{2s+2}'
= b_{2s}'.
\]
Therefore \(\beta\) contains a subform of type \(\mathbf{N}_{2s}\); specialize
it to \(\mathbf{N}_2 \oplus \mathbf{N}_{2s-2}\) to obtain \(\beta''\). Then
\(\Psi_{2k}'' = \Psi_{2k} + 1\) for \(1 \leq k < s\) and
\(\Psi_{2k}'' = \Psi_{2k}\) for \(k \geq s\), so the choice
of \(s\) implies \(\Psi_{2k}'' \leq \Psi_{2k}'\) for all \(k\).

\smallskip
\noindent\textbf{Step \refstepcounter{specialization-steps}\arabic{specialization-steps}.}
\emph{After a sequence of specializations of the form
\[
\mathbf{N}_{2t} \oplus \mathbf{N}_{2s} \rightsquigarrow
\mathbf{N}_{2t+2} \oplus \mathbf{N}_{2s-2}
\]
where \(s > t\), may assume that
\(\Psi_m = \Psi_m'\) for every \(m \geq 1\).
}

\smallskip
Consider any pair \(s > t\) such that
\(\Psi_{2t} = \Psi_{2t}'\),
\(\Psi_{2s} = \Psi_{2s}'\), and
\(\Psi_{2k} < \Psi_{2k}'\) for each
\(t < k < s\).
Arguing as in Steps 3 and 4, this implies that \(b_{2s} > b_{2s}'\) and
\(b_{2t} > b_{2t}'\), so that \(\beta\) contains
\(\mathbf{N}_{2t} \oplus \mathbf{N}_{2s}\); specialize this to
\(\mathbf{N}_{2t+2} \oplus \mathbf{N}_{2s-2}\) to obtain \(\beta''\). Then
\(\Psi_{2k}'' = \Psi_{2k} + 1\) for \(t < k < s\), and
\(\Psi_{2k}'' = \Psi_{2k}\) otherwise. By choice of \(s\)
and \(t\), this implies that
\(\Psi_{2k}'' \leq \Psi_{2k}'\) for all \(k\).
\end{proof}

\subsectiondash{Remarks}\label{moduli-specializations-remarks}
First, the hypotheses of \parref{moduli-specializations-sufficient}
may be relaxed: it is enough that there is some specialization
\(\beta \rightsquigarrow \beta''\) such that
\(\Theta_m(\beta'') \leq \Theta_m(\beta')\) for all \(m\). For instance, the
specialization
\(\mathbf{N}_3^{\oplus 2} \rightsquigarrow \mathbf{N}_1 \oplus \mathbf{N}_5\)
found at the right end of Figure \parref{moduli-six-dim-figure} does not
satisfy the inequality \(\Theta_2\),
but nonetheless exists by \parref{moduli-basic-specializations}; more generally,
any immediate specialization via
\[
\mathbf{N}_{2s+1} \oplus \mathbf{N}_{2s+2t-1} \rightsquigarrow
\mathbf{N}_{2s-1} \oplus \mathbf{N}_{2s+2t+1},
\]
the fifth basic specialization of \parref{moduli-basic-specializations}, will
violate some of the inequalities \(\Theta_m\). Note further that the proof of
\parref{moduli-specializations-sufficient} did not incorporate these
specializations at all.

Second, there are examples of \(q\)-bic forms \(\beta\) and \(\beta'\)
that satisfy the inequalities \(\Phi_m(\beta) \leq \Phi_m(\beta')\), but
cannot be specialized to one another via a sequence of basic specializations
from \parref{moduli-basic-specializations}. The first examples appear in
dimension \(15\) and corank \(3\); for instance, consider
\[
\beta \coloneqq \mathbf{1} \oplus \mathbf{N}_3^{\oplus 2} \oplus \mathbf{N}_8
\quad\text{and}\quad
\beta' \coloneqq \mathbf{N}_1 \oplus \mathbf{N}_7^{\oplus 2}.
\]
I do not know whether or not \(\beta\) specializes to \(\beta'\).

To go further, it would be interesting to study \(q\)-bic forms over
discrete valuation rings and the fine geometry of the closure of
\(\qbics_{V,\mathbf{b}}\). For the former, it may be helpful to reformulate
the canonical filtrations in terms of quotients so as to deal with jumps
in types upon specialization. For the latter, it would be interesting to
know the degrees and singularities of these varieties.

\bibliographystyle{amsalpha}
\bibliography{main}

\newcommand{\etalchar}[1]{$^{#1}$}
\providecommand{\bysame}{\leavevmode\hbox to3em{\hrulefill}\thinspace}
\providecommand{\MR}{\relax\ifhmode\unskip\space\fi MR }
\providecommand{\MRhref}[2]{%
  \href{http://www.ams.org/mathscinet-getitem?mr=#1}{#2}
}
\providecommand{\href}[2]{#2}
\begin{thebibliography}{KKP{\etalchar{+}}22}

\bibitem[BC66]{BC:Hermitian}
Raj~C. Bose and Indra~M. Chakravarti, \emph{Hermitian varieties in a finite
  projective space {${\rm PG}(N,\,q^{2})$}}, Canadian J. Math. \textbf{18}
  (1966), 1161--1182. \MR{200782}

\bibitem[Bea90]{Beauville:Moduli}
Arnaud Beauville, \emph{Sur les hypersurfaces dont les sections hyperplanes
  sont \`a module constant}, The {G}rothendieck {F}estschrift, {V}ol. {I},
  Progr. Math., vol.~86, Birkh\"{a}user Boston, Boston, MA, 1990, pp.~121--133.
  \MR{1086884}

\bibitem[CCN{\etalchar{+}}85]{ATLAS}
John~H. Conway, Robert~T. Curtis, Simon~P. Norton, Richard~A. Parker, and
  Robert~A. Wilson, \emph{{$\Bbb{ATLAS}$} of finite groups}, Oxford University
  Press, Eynsham, 1985. \MR{827219}

\bibitem[Che22]{thesis}
Raymond Cheng, \emph{Geometry of q-bic {H}ypersurfaces}, ProQuest LLC, Ann
  Arbor, MI, 2022, Thesis (Ph.D.)--Columbia University. \MR{4435221}

\bibitem[Che23]{fano-schemes}
\bysame, \emph{{\(q\)}-bic hypersurfaces and their {F}ano schemes}, available
  at \href{https://arxiv.org/abs/2307.06160}{arXiv:2307.06160} (2023).

\bibitem[Che24a]{ratconn}
\bysame, \emph{Minimal free rational curves in fano hypersurfaces have high
  degree}, in preparation (2024).

\bibitem[Che24b]{qbic-threefolds}
\bysame, \emph{{$q$}-bic threefolds and their surface of lines}, preprint at
  \href{https://arxiv.org/abs/2402.09884}{arXiv:2402.09884} (2024).

\bibitem[Con06]{Conduche}
Denis Conduch\'e, \emph{Courbes rationnelles et hypersurfaces de l'espace
  projectif}, Ph.D. thesis, Universit\'e Louis Pasteur, 2006.

\bibitem[DG70]{DG}
Michel Demazure and Pierre Gabriel, \emph{Groupes alg\'{e}briques. {T}ome {I}:
  {G}\'{e}om\'{e}trie alg\'{e}brique, g\'{e}n\'{e}ralit\'{e}s, groupes
  commutatifs}, Masson \& Cie, \'{E}diteurs, Paris; North-Holland Publishing
  Co., Amsterdam, 1970. \MR{0302656}

\bibitem[DL76]{DL}
Pierre Deligne and George Lusztig, \emph{Representations of reductive groups
  over finite fields}, Ann. of Math. (2) \textbf{103} (1976), no.~1, 103--161.
  \MR{393266}

\bibitem[Hef85]{Hefez:Thesis}
Abramo Hefez, \emph{Duality for projective varieties}, Ph.D. thesis,
  Massachusetts Institute of Technology, 1985. \MR{2941091}

\bibitem[HH16]{HH:Fermat}
Thanh Hoai~Hoang, \emph{Degeneration of {F}ermat hypersurfaces in positive
  characteristic}, Hiroshima Math. J. \textbf{46} (2016), no.~2, 195--215.
  \MR{3536996}

\bibitem[Hir79]{Hirschfeld:Geometries}
James W.~P. Hirschfeld, \emph{Projective geometries over finite fields}, Oxford
  Mathematical Monographs, The Clarendon Press, Oxford University Press, New
  York, 1979.

\bibitem[KKP{\etalchar{+}}22]{KKPSSW:F-Pure}
Zhibek Kadyrsizova, Jennifer Kenkel, Janet Page, Jyoti Singh, Karen~E. Smith,
  Adela Vraciu, and Emily~E. Witt, \emph{Lower bounds on the {F}-pure threshold
  and extremal singularities}, Trans. Amer. Math. Soc. Ser. B \textbf{9}
  (2022), 977--1005. \MR{4498775}

\bibitem[KP91]{KP:Gauss}
Steven Kleiman and Ragni Piene, \emph{On the inseparability of the {G}auss
  map}, Enumerative algebraic geometry ({C}openhagen, 1989), Contemp. Math.,
  vol. 123, Amer. Math. Soc., Providence, RI, 1991, pp.~107--129.

\bibitem[Kun69]{Kunz}
Ernst Kunz, \emph{Characterizations of regular local rings of characteristic
  {$p$}}, Amer. J. Math. \textbf{91} (1969), 772--784. \MR{252389}

\bibitem[Lan56]{Lang}
Serge Lang, \emph{Algebraic groups over finite fields}, Amer. J. Math.
  \textbf{78} (1956), 555--563. \MR{86367}

\bibitem[Li23]{Li:DL}
Chao Li, \emph{Degrees of unitary {D}eligne--{L}usztig varieties}, preprint at
  \href{https://arxiv.org/abs/2301.08886}{arXiv:2301:08886} (2023).

\bibitem[LZ22]{LZ:Kudla}
Chao Li and Wei Zhang, \emph{Kudla-{R}apoport cycles and derivatives of local
  densities}, J. Amer. Math. Soc. \textbf{35} (2022), no.~3, 705--797.


\bibitem[LTX{\etalchar{+}}22]{LTXZZ}
Yifeng Liu, Yichao Tian, Liang Xiao, Wei Zhang, and Xinwen Zhu, \emph{On the
  {B}eilinson-{B}loch-{K}ato conjecture for {R}ankin-{S}elberg motives},
  Invent. Math. \textbf{228} (2022), no.~1, 107--375.

\bibitem[Lus76]{Lusztig:Green}
George Lusztig, \emph{On the {G}reen polynomials of classical groups}, Proc.
  London Math. Soc. (3) \textbf{33} (1976), no.~3, 443--475.

\bibitem[Mil17]{Milne:AlgGroups}
James~S. Milne, \emph{Algebraic groups}, Cambridge Studies in Advanced
  Mathematics, vol. 170, Cambridge University Press, Cambridge, 2017, The
  theory of group schemes of finite type over a field. \MR{3729270}

\bibitem[Nom95]{Noma}
Atsushi Noma, \emph{Hypersurfaces with smooth dual varieties}, Amer. J. Math.
  \textbf{117} (1995), no.~6, 1507--1515.

\bibitem[Seg65]{Segre:Hermitian}
Beniamino Segre, \emph{Forme e geometrie hermitiane, con particolare riguardo
  al caso finito}, Ann. Mat. Pura Appl. (4) \textbf{70} (1965), 1--201.
  \MR{213949}

\bibitem[She12]{Shen:Fermat}
Mingmin Shen, \emph{Rational curves on {F}ermat hypersurfaces}, C. R. Math.
  Acad. Sci. Paris \textbf{350} (2012), no.~15-16, 781--784. \MR{2981353}

\bibitem[Shi74]{Shioda:Fermat}
Tetsuji Shioda, \emph{An example of unirational surfaces in characteristic
  {$p$}}, Math. Ann. \textbf{211} (1974), 233--236. \MR{374149}

\bibitem[Shi77]{Shioda:Unirationality}
\bysame, \emph{Some results on unirationality of algebraic surfaces}, Math.
  Ann. \textbf{230} (1977), no.~2, 153--168.

\bibitem[SK79]{SK:Fermat}
Tetsuji Shioda and Toshiyuki Katsura, \emph{On {F}ermat varieties}, Tohoku
  Math. J. (2) \textbf{31} (1979), no.~1, 97--115.


\bibitem[Shi01]{Shimada:Lattices}
Ichiro Shimada, \emph{Lattices of algebraic cycles on {F}ermat varieties in
  positive characteristics}, Proc. London Math. Soc. (3) \textbf{82} (2001),
  no.~1, 131--172. \MR{1794260}

\bibitem[{Stacks}]{stacks-project}
The {Stacks Project Authors}, \emph{\textit{Stacks Project}},
  {\small\url{https://stacks.math.columbia.edu}}.

\bibitem[Ste16]{Steinberg:Lectures}
Robert Steinberg, \emph{Lectures on {C}hevalley groups}, University Lecture
  Series, vol.~66, American Mathematical Society, Providence, RI, 2016.
  \MR{3616493}

\bibitem[Tat65]{Tate:Conjecture}
John~T. Tate, \emph{Algebraic cycles and poles of zeta functions}, Arithmetical
  {A}lgebraic {G}eometry ({P}roc. {C}onf. {P}urdue {U}niv., 1963), Harper \&
  Row, New York, 1965, pp.~93--110. \MR{0225778}

\bibitem[Vol10]{Vollaard}
Inken Vollaard, \emph{The supersingular locus of the {S}himura variety for
  {${\rm GU}(1,s)$}}, Canad. J. Math. \textbf{62} (2010), no.~3, 668--720.

\bibitem[Wal56]{Wallace:Duality}
Andrew~H. Wallace, \emph{Tangency and duality over arbitrary fields}, Proc.
  London Math. Soc. (3) \textbf{6} (1956), 321--342.

\end{thebibliography}
\end{document}